\documentclass{scrartcl}
\usepackage{tikz}
\usepackage{amsthm}
\usepackage{amsfonts, amsmath, amssymb}

\newtheorem{theorem}{Theorem}[section]
\newtheorem{lemma}[theorem]{Lemma}
\newtheorem{proposition}[theorem]{Proposition}
\newtheorem{corollary}[theorem]{Corollary}
\theoremstyle{definition}
\theoremstyle{remark}
\newtheorem{remark}[theorem]{Remark}

\title{The 2-Cops-Move Cop Number of Graphs on the Torus, Klein Bottle, and Projective Plane}
\author{Florian Lehner and Alan Sun}
\date{\today}

\begin{document}

\maketitle

\begin{abstract}
We study the $2$-cops-move variant of the game of cops and robber, in which at most two cops may move in each turn. Let $c_2(G)$ denote the corresponding cop number. We prove that every graph embeddable on the torus, Klein bottle, or projective plane satisfies $c_2(G)\le 3$, extending a recent result of González Hermosillo de la Maza and Mohar for planar graphs.
\end{abstract}

\section{Introduction}
Cops and robber is a pursuit-evasion game played on a graph. In this perfect information game, a team of cops and a single robber occupy vertices of a graph and move alternately along edges. The cops win if one of them occupies the same vertex as the robber after finitely many moves; otherwise, the robber wins.
The game with a single cop was first studied independently by Nowakowski and Winkler~\cite{NowakowskiWinkler1983} and Quilliot~\cite{Quilliot1983}. The variant using multiple cops was introduced by Aigner and Fromme~\cite{AignerFromme1984} who defined the \emph{cop number} of a graph $G$, denoted $c(G)$, as the minimum number of cops needed to guarantee capture of the robber on $G$.

One of the central questions in this area concerns the relationship between the cop number and geometric and topological properties of a graph. Aigner and Fromme~\cite{AignerFromme1984} showed that every planar graph has cop number at most 3, and it is well known that there are graphs such as the dodecahedron net which meet this bound. This was extended to graphs of higher genus by Quilliot~\cite{Quilliot1985} who proved that any graph embeddable on a surface of genus $g$ has cop number at most $2g +3$. Schroeder~\cite{Schroeder2001} subsequently improved this bound to $\lfloor \frac{3g}{2}\rfloor + 3$ and conjectured a bound of $g+3$; the currently best known general upper bound on the cop number of graphs of genus $g$ is $\frac 43 g + \frac {10}3$, see~\cite{BowlerErde2021}. We refer the reader to the survey paper~\cite{BonatoMohar2020} by Bonato and Mohar for further background on topological directions in cops and robber.

A natural variant of cops and robber, introduced by Offner and Ojakian~\cite{OffnerOjakian2019} and studied under different names including \emph{lazy cops and robber}~\cite{BalBonato2016} and the \emph{one-cop-moves game}~\cite{YangHamilton2015}, restricts the cops' movements by allowing at most one cop to move in each turn. 

More generally, we can consider the \emph{$k$-cops-move game} where at most $k$ cops may move on each turn. Denoting by $c_k(G)$ the minimum number of cops required to win the $k$-cops-move game on graph $G$, it is clear that $c(G) \leq c_k(G)$. This raises the question how close this inequality is to being tight, and whether bounds for $c(G)$ carry over to the $k$-cops-move game. For the special case of planar graphs, Gao and Yang~\cite{GaoYang2017} provide examples with $c_1(G) \geq 4$, showing that Aigner and Fromme's bound does not extend to the lazy cop variant. On the other hand, González Hermosillo de la Maza and Mohar~\cite{GonzalezMohar2024} recently showed that $c_2(G) \leq 3$ for all planar graphs. This suggests that the 2-cops-move game behaves much more like the classical game than the lazy variant.

In this paper we consider the analogous problem for graphs embeddable on the torus, Klein bottle, or projective plane. For such graphs it is known that $c(G) \leq 3$; this was proved for the torus in~\cite{Lehner2021}, but the proof is easily seen to extend to the other two surfaces. The examples by Gao and Yang~\cite{GaoYang2017} show that $c_1(G) >3 $ is possible, leaving only $c_2$ open. We settle this remaining case by proving the following result.

\begin{theorem}
Let $G$ be a graph embeddable on the torus, Klein bottle, or projective plane. Then $c_2(G) \leq 3$.
\end{theorem}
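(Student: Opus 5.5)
We follow the strategy of Lehner for the torus, where three cops catch the robber in the classical game, and check that each phase can be carried out with at most two cops moving per turn. The basic tool is the guarding of isometric (geodesic) paths. By the result of Aigner and Fromme, one cop can, after finitely many setup moves, guard a shortest path $P$ between two vertices, so that the robber can never step onto $P$. Once in position, this cop only needs to move when the robber moves, and it never has to move otherwise. The planar argument of González Hermosillo de la Maza and Mohar works in exactly this framework. They maintain a territory bounded by two geodesic paths, each guarded by one cop, while the third cop moves to set up a new guarded path that shrinks the territory. The key point is that the new path can always be chosen to share an endpoint or segment with an old one. This ensures that, at every time, at most two cops actually have to move: the ``free'' cop and one guard that is genuinely needed. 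We will use their planar result in this form: if the robber is confined to a region of an embedded graph whose boundary is the union of two geodesic paths in the current subgraph, each guarded by a cop, then three cops win the 2-cops-move game.

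The plan has two steps. First, reduce the surface to the plane. On the torus, Klein bottle, or projective plane, take a shortest non-contractible cycle $C$ (or, for the projective plane, a shortest one-sided cycle). Such a cycle can be written as the union of two geodesic paths $P_1,P_2$ between two antipodal-ish vertices; more precisely, one vertex $v$ and a shortest closed walk through $v$. Two cops guard $P_1$ and $P_2$; this uses two moving cops only during setup, with the third cop idle. Once both are in place, the robber lives in $G-C$. Cutting the surface along $C$ yields, depending on the surface and on whether $C$ is one- or two-sided, a cylinder, a disc, or a Möbius band. We then repeat with a shortest non-contractible cycle in the robber's component of $G - C$, measured in the subgraph $G'$ obtained by deleting $C$ (or rather in $G$ minus the guarded part). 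A new non-contractible cycle $C'$ that is geodesic in $G'$ must be found and guarded while keeping the robber off $C$. To do this we free one of the two old guards. The standard trick, as in Lehner's argument, is to choose $C'$ through a common vertex, or to let the new guard be placed while the old guard of $P_1$ still holds, and then release $P_1$. Since at each step only the arriving cop and at most one guard reacting to the robber need to move, we stay within the two-moves constraint. After at most two such cuts, the robber's component lies in a planar (disc-type) region bounded by guarded geodesics, and the planar result applies.

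The main obstacle is the transition step. When a cop enters the guarding position on a new path, both old guards may be required to respond to the robber in the same turn, while the new cop also wants to move. Thus three cops would need to move. We handle this by ordering the process so that each newly guarded geodesic contains, or is shortcut through, one of the old ones. Then a guard that becomes redundant simply stops. The robber's territory is shrunk only when the entering cop is already in its final ``shadow'' position, which in Aigner--Fromme guarding means it only moves in response to the robber. During the approach phase, one old guard may be frozen because the robber is on the far side. We expect to need a careful case analysis, in the style of the planar paper, showing that at any moment at most one of the two old guards is ``active''. This reduces to checking that the robber is adjacent in the relevant sense to only one of the guarded paths at a time. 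This active-guard lemma, adapted to cylinders and Möbius bands, is where most of the work will lie.
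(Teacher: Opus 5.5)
There is a genuine gap at the topological level, before the 2-cops-move restriction even enters. You claim that after at most two cuts along shortest non-contractible cycles the robber is confined to a disc bounded by two guarded geodesics, with the third cop free. On the torus and the Klein bottle this never happens. Cutting along a non-contractible cycle $C$ gives an annulus or M\"obius band(s), and further cuts along non-contractible cycles again give only annuli or M\"obius bands. To reach a disc you must also guard an arc $R$ joining the two sides of $C$. That disc is bounded by both copies of $P_1$, $P_2$ and $R$, so all three cops are guarding. No guard of $C$ can be released without letting the robber wrap around the surface again, and your proposal offers no way to do so. (On the projective plane one cut does give a disc, but its boundary is $C$ traversed twice, so it is not of the form your planar black box assumes either.) This is the cutting argument behind Quilliot's and Schroeder's genus bounds, which give only $5$ and $4$ cops for toroidal graphs even in the classical game. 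It is not Lehner's strategy. The paper, following Lehner, instead lifts to the infinite cylinder cover of Lemma~\ref{lem:cover} and plays the $\sigma$-teleportation game of Lemma~\ref{lem:sigmateleportation}. There a \emph{single} cop guarding a periodic geodesic $P$ turns the cylinder into a strip. The price is that all guarded geodesics are infinite, so there is no finite planar region to hand over. Progress is therefore measured by the number of robber bands, and only when none remain does Lemma~\ref{lem:norobberbands} cut the territory down to a finite region.

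Second, the step you flag as the main obstacle is left open, and the mechanism you suggest does not work. It is false that at most one of two guards is ``active'' at a time. A robber walking through vertices that lie on bypaths of both guarded geodesics can force both guards to move on every turn, however long it likes. Sharing an endpoint does not prevent this: in the paper's configurations $P_1$ and $P_2$ both emanate from $q$, and the no-common-bypath property still has to be arranged separately. Your remark that a guard moves only when the robber moves gives nothing, since the robber may move every turn. What is true is the conditional Lemma~\ref{lem:leisurelyguarding}: if the robber stays within bounded distance of a fixed vertex and never visits a vertex lying on bypaths of \emph{both} geodesics, then one guard can eventually pause. Neither hypothesis comes for free. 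The paper secures the second by taking $Q$ (and later $Q'$) to be the topmost geodesic of its band, so that Lemma~\ref{lem:nodoublebypath} applies, and by using most counter-clockwise rays, whose bypaths lie on one side (Lemma~\ref{lem:almostperiodic}). It handles failure of the first by showing that a robber escaping to infinity drags the wide shadow on $P_3^i$ (respectively $P_2'$) onto $Q'$ (respectively $S$), which is itself progress. The level-based case analysis over regions $A$, $B$, $C$ in Lemma~\ref{lem:reducebands}, which restores a legal two-mover configuration whenever the robber changes region, is the ``active-guard lemma'' you postpone. Finally, your planar black box is stronger than what is available. The reduction in Lemma~\ref{lem:norobberbands} also requires one boundary geodesic to be bypath-free in the robber's region.
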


Our strategy proceeds in phases, each of which decreases the robber territory, that is, the set of vertices accessible to the robber. It combines ideas from~\cite{GonzalezMohar2024}, which allow two cops to guard paths while only two cops move in each turn, with the cover-based approach from~\cite{Lehner2021}, which makes it possible to handle surfaces of higher genus.

As in~\cite{Lehner2021}, we work with an infinite cover of the graph and aim to restrict the robber to a finite region in this infinite cover which allows us to apply the result from the planar case. The main difference is that a priori we are not able to identify a finite set of `exit points' for the robber, so we need a different way of measuring progress towards our goal of restricting the robber to a finite subgraph. To this end, in Section \ref{subsec:periodicbands} we introduce the notion of \emph{bands} based on an equivalence relation on periodic geodesics. The number of bands in the robber territory essentially measures how many periodic geodesics the robber can access without being caught.

To ensure that only two cops move at any given turn, we guard paths based on ideas introduced by González Hermosillo de la Maza and Mohar~\cite{GonzalezMohar2024}. A key idea of~\cite{GonzalezMohar2024} is that, under suitable conditions on the robber's position, a cop can guard a geodesic while repeatedly remaining stationary for a turn. This property, which we refer to as leisurely guarding, enables two cops to guard paths while the third cop gets in position (moving only when one of the other cops remains stationary). 

Unfortunately, neither of the hypotheses required for the planar argument is automatically satisfied in our setting: the robber's position is not guaranteed to satisfy the necessary conditions, and the geodesics that must be guarded are generally infinite. The first difficulty is resolved by refining the leisurely-guarding machinery from~\cite{GonzalezMohar2024}; see Section~\ref{subsec:wideshadow}, in particular Lemma~\ref{lem:leisurelyguarding}. The second is overcome by ensuring that each phase can succeed in one of two ways: either leisurely guarding eventually applies when the robber remains bounded (allowing the third cop to move), or the structure of the guarded paths forces progress when the robber attempts to escape to infinity.

These two mechanisms ensure that every phase of the strategy eventually decreases the number of bands the robber territory. Iterating this argument reduces the robber territory to a finite region, where capture follows from the planar techniques of~\cite{GonzalezMohar2024}.

\section{Preliminaries}
Graphs in this paper may be infinite, but are always assumed to be locally finite (that is, every vertex has finitely many neighbours), simple, and connected. 

\subsection{Paths and Geodesics}

A \emph{path} $P$ is a sequence of vertices $P = (v_i)_{i \in I}$ such that $\{v_{i-1}, v_i\} \in E(G)$ and all vertices $v_i$ are distinct, where $I \subseteq \mathbb Z$ is a set of consecutive integers, that is, if $i,j \in I$ and $i \leq k \leq j$, then $k \in I$. The \emph{length} of a path $P$ is $|I|-1$, and is denoted by $|P|$. The first and last vertex in a path (if they exist) are called its endpoints. A path with endpoints $x$ and $y$ is said to \emph{connect} $x$ and $y$. A path with exactly one endpoint is called a \emph{ray}, and a path with no endpoints is called a \emph{double ray}.

The \emph{distance} between two vertices $x,y \in V(G)$, denoted by $d(x,y)$, is the minimal length of a path connecting $x$ and $y$. Note that this is well-defined since graphs are assumed to be connected.
A path $P$ is called a \emph{geodesic} if $d(v_i,v_j) = i-j$ for any pair of vertices on $P$. 

We will frequently restrict and concatenate paths; the following notation will be useful. Let $P$ be a path and let $v$ be a vertex of $P$. We denote by $vP$ the subpath of $P$ starting at $v$, that is, the path consisting of $v$ and all vertices after $v$ on $P$. Similarly, we denote by $Pv$ the subpath of $P$ ending at $v$. Combining these two notations, $uPv$ is the subpath of $P$ from $u$ to $v$. We also occasionally define a path $P$ connecting $u$ and $v$ as $uPv$ if we want to emphasize the endvertices of the path. If $P$ and $Q$ are two paths, and $v$ is contained in both, we denote by $PvQ$ the concatenation of $Pv$ and $vQ$, in other words, $PvQ$ is the path following $P$ until $v$, and $Q$ from $v$ onwards. We say that $P$ \emph{eventually follows} $Q$ if there is some vertex $v$ (which necessarily lies in both $P$ and $Q$) such that $P = PvQ$.

The following definition is adapted from~\cite{GonzalezMohar2024}. Let $G$ be a graph, let $H$ be a subgraph of $G$, and $P$ be a geodesic in $H$. A path $uBv$ of finite length in $H$ is called a \emph{bypath} of $P$ in $H$ if $u$ and $v$ are contained in $P$ and $PuBvP$ is a geodesic in $H$, see Figure \ref{fig:bypath}. Note that bypaths of the form $uPv$ always exist, so we call these bypaths trivial. A geodesic $P$ is \emph{bypath-free} in $H$ if $H$ contains no non-trivial bypath of $P$. If $P$ is bypath-free, then every subpath of $P$ is bypath-free. Moreover, paths of length 1 are trivially bypath-free.

\begin{figure}
    \centering
    \begin{tikzpicture}[line cap=round,line join=round]
\clip(-0.5,-2.2) rectangle (8.5,2.5);
\draw [line width=1.5pt] (0,0)-- (8,0);
\draw [line width=1pt] (4,0)-- (5,-1.5);
\draw [line width=1pt] (5,-1.5)-- (7,-1.5);
\draw [line width=1pt] (7,-1.5)-- (8,0);
\draw [line width=1pt] (2,0)-- (4,1.5);
\draw [line width=1pt] (4,1.5)-- (6,1.5);
\draw [line width=1pt] (6,1.5)-- (8,0);
\begin{scriptsize}
\draw [fill=black] (0,0) circle (2.5pt);
\draw[color=black] (0,-0.3) node[font=\large] {$v_{0}$};
\draw [fill=black] (2,0) circle (2.5pt);
\draw[color=black] (2,-0.3) node[font=\large] {$v_{1}$};
\draw [fill=black] (4,0) circle (2.5pt);
\draw[color=black] (4,-0.3) node[font=\large] {$v_{2}$};
\draw [fill=black] (6,0) circle (2.5pt);
\draw[color=black] (6,-0.3) node[font=\large] {$v_{3}$};
\draw [fill=black] (8,0) circle (2.5pt);
\draw[color=black] (8,-0.3) node[font=\large] {$v_{4}$};
\draw [fill=black] (5,-1.5) circle (2.5pt);
\draw[color=black] (5,-1.8) node[font=\large] {$x$};
\draw [fill=black] (7,-1.5) circle (2.5pt);
\draw[color=black] (7,-1.8) node[font=\large] {$y$};
\draw [fill=black] (4,1.5) circle (2.5pt);
\draw[color=black] (4,1.8) node[font=\large] {$s$};
\draw [fill=black] (6,1.5) circle (2.5pt);
\draw[color=black] (6,1.8) node[font=\large] {$t$};
\end{scriptsize}
\end{tikzpicture}
    \caption{\small Path $B = Pv_1D$ where $D = v_1stv_4$ is a bypath of $P = v_0v_1v_2v_3v_4$ but path $C = Pv_2E$ where $E = v_2xyv_4$ is not.}
    \label{fig:bypath}
\end{figure}

\subsection{Periodic Covers}

An \emph{automorphism} of a graph $G$ is a bijection $\phi \colon V(G) \to V(G)$ such that two vertices $v$ and $w$ are adjacent if and only if $\phi(v)$ and $\phi(w)$ are adjacent. 
A graph $\hat{G} = (\hat{V}, \hat{E})$ is a \emph{cover} of $G = (V,E)$ if there exists a surjective map $\phi \colon \hat{V} \to V$ such that for every $v \in \hat{V}$, $\phi$ is a bijection from $N(v)$ to $N(\phi(v))$. The map $\phi$ is called a \emph{covering map}. 

\begin{lemma}
    \label{lem:cover}
    Let $G$ be a finite graph embedded on a torus, Klein bottle, or projective plane. Then $G$ has a cover which can be embedded in an infinite cylinder $\mathbb R \times \mathbb R/\mathbb Z$ so that the shift $\sigma \colon (x,y) \mapsto (x+1,y)$ induces an automorphism of the cover, with a covering map $\phi$ satisfying $\phi \circ \sigma = \phi$. Moreover, only finitely many vertices embed into each annulus $[x,x+1) \times R/\mathbb Z$.
\end{lemma}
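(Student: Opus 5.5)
We plan to obtain the cover topologically: find a covering space $\pi\colon \mathbb R\times\mathbb R/\mathbb Z \to S$ of the surface $S$ whose deck group contains the shift $\sigma$, and pull the embedded graph $G$ back along $\pi$. Concretely, we take $\hat G=\pi^{-1}(G)$, meaning vertices are the preimages of vertices and edges are the lifts of edges. We then set $\phi=\pi|_{\hat V}$. Since $\pi$ is a covering map of surfaces, each small neighbourhood of a vertex $v$ of $G$ lifts homeomorphically around each preimage $\hat v$. Hence the edges at $\hat v$ correspond bijectively to the edges at $v$, so $\phi$ restricts to a bijection $N(\hat v)\to N(\phi(\hat v))$; simplicity is inherited provided $G$ is simple. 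If $G$ is disconnected, or the lifted graph is disconnected, we would pass to a component, or embed the graph so that it is cellular after adding edges. I expect the statement only needs the cover as a graph, possibly disconnected, and this must be checked against the paper's standing connectivity convention. The identity $\phi\circ\sigma=\phi$ holds because $\sigma$ is a deck transformation, that is, $\pi\circ\sigma=\pi$, and $\sigma$ maps $\hat G$ to itself as a graph automorphism.

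The main step is constructing $\pi$ for each surface.

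\emph{Torus.} Write $T=\mathbb R^2/\mathbb Z^2$ and take $\pi(x,y)=(x \bmod 1, y \bmod 1)$ on $\mathbb R\times \mathbb R/\mathbb Z$. This is the quotient by $\langle\sigma\rangle$.

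\emph{Klein bottle.} Write $K=\mathbb R^2/\Gamma$, where $\Gamma$ is generated by $a(x,y)=(x,y+1)$ and $b(x,y)=(x+1,-y)$. The subgroup $\langle a\rangle$ is normal, since $bab^{-1}=a^{-1}$. So $\mathbb R^2/\langle a\rangle=\mathbb R\times\mathbb R/\mathbb Z$ covers $K$, and $b$ descends to the glide $(x,y)\mapsto(x+1,-y)$. The plain shift $\sigma=b^2$ also lies in the deck group, so $\pi\circ\sigma=\pi$ as required.

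\emph{Projective plane.} Here the fundamental group is finite, so no cylinder covers it directly. Instead we first use the sphere double cover $S^2\to\mathbb{RP}^2$ and lift $G$ to a finite graph on $S^2$. We then puncture $S^2$ at two antipodal points not on the graph and identify the punctured sphere with $\mathbb R\times\mathbb R/\mathbb Z$. Finally we take $\pi$ to be the composition of the universal cover of the cylinder-type map $\mathbb R\times\mathbb R/\mathbb Z\to S^2\setminus\{p,-p\}$, but this has the wrong form. The better route is to view the punctured sphere as an open annulus $A$ and use the covering $\mathbb R\times\mathbb R/\mathbb Z\to A$ given by $(x,y)\mapsto(\tanh$-type rescaling$)$. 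This does not work, because coverings of an annulus by a cylinder are of finite degree around the core. The alternative is to take $\mathbb R\times \mathbb R/\mathbb Z$ itself containing the graph periodically: place copies of the finite sphere graph (minus a small disk neighbourhood) in each annulus, which destroys the covering property. So the projective plane is the main obstacle. My planned resolution is to lift to the sphere with the graph there being planar, and then to take the trivial periodic cover $\hat G= \tilde G\times\mathbb Z$ (disjoint copies of the planar lift $\tilde G$ drawn in successive annuli $[n,n+1)\times\mathbb R/\mathbb Z$, each copy embedded in a disk). The map $\phi$ then sends each copy to $G$ via the double cover, and $\sigma$ permutes the copies. This is a legitimate graph cover if connectivity of $\hat G$ is not required, and that requirement should be checked against how the paper uses the lemma.

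Finally, local finiteness and the annulus finiteness follow from compactness. $G$ is finite, and each fundamental domain $[x,x+1)\times\mathbb R/\mathbb Z$ maps onto $S$ with compact closure, so it meets only finitely many lifts of each vertex. In the torus and Klein bottle cases this is because $\pi$ restricted to $[0,1]\times \mathbb R/\mathbb Z$, or $[0,2]\times\mathbb R/\mathbb Z$ for the Klein bottle, is finite-to-one. In the projective-plane construction it holds by design. This also gives the required finiteness of the vertex set in each annulus.
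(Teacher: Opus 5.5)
Your torus and Klein bottle constructions are correct and amount to what the paper does. The paper treats all three surfaces at once: it pulls $G$ back to the unit-square tiling of $\mathbb R^2$, notes that the translations $(x,y)\mapsto(x+2m,y+2n)$ preserve the lifted graph and commute with the projection, then quotients by the vertical translation by $2$ and rescales.

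The gap is the projective plane. Your final construction $\hat G=\tilde G\times\mathbb Z$ is a disjoint union of finite graphs. The paper's standing convention (start of the Preliminaries) is that all graphs are connected. The cover is used afterwards as a connected infinite graph containing periodic geodesic double rays, bands, geodesic rays from any vertex eventually following a periodic geodesic, and so on. Your $\hat G$ contains no infinite path at all, so it is not the object the lemma is meant to produce. (For the main theorem alone, the planar double cover $\tilde G$ would already suffice via Lemma~\ref{lem:teleportation} and the planar result, but that bypasses the lemma rather than proving it.)

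The missing idea is that the cylinder need not itself cover the surface; only the graph cover has to embed in it, with $\sigma$ acting as a deck transformation. So the route you abandoned works if you take the universal cover of the punctured surface instead of looking for a cylinder covering the annulus:
\begin{itemize}
\item Puncture $\mathbb{RP}^2$ at a point $\bar p$ inside a face of $G$ (equivalently $S^2$ at $\pm p$). The open M\"obius band $\mathbb{RP}^2\setminus\{\bar p\}$ has universal cover the strip $\mathbb R\times(-1,1)$, with deck group generated by $g(x,y)=(x+1,-y)$.
\item $g^2$ is translation by $2$, and the strip embeds in $\mathbb R\times\mathbb R/\mathbb Z$ so that, after rescaling, $g^2$ becomes $\sigma$.
\item Pulling $G$ back gives a cover with $\phi\circ\sigma=\phi$ and finitely many vertices per annulus.
\item This cover is connected whenever $G$ has a one-sided cycle, since such a cycle represents a generator of $\pi_1$ of the M\"obius band (its lift to $S^2$ is an antipodally invariant cycle, hence separates $p$ from $-p$). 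If $G$ has no one-sided cycle, then $G$ is planar anyway.
\end{itemize}

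Your remark that $\mathbb{RP}^2$ admits no covering by the plane or the cylinder is correct, and the paper glosses over it by calling $\mathbb R^2$ the universal cover of all three surfaces. For $\mathbb{RP}^2$ the square tiling is generated by the glide reflections $(x,y)\mapsto(1-x,y+1)$ and $(x,y)\mapsto(x+1,1-y)$. It is a covering only away from the two corner classes of the square, which are branch points of order $2$. This is harmless once those points lie in faces of $G$, and it is the same puncturing idea.
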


\begin{proof}
    Recall that the universal cover of each of these three surfaces is $\mathbb R^2$, and that the fundamental polygon in each of these covers may be chosen to be the unit square $[0,1)^2$. 
    
    This gives rise to a planar cover $G'$ of any graph $G$ embedded on one of the 3 surfaces. 
    Let $\iota \colon V(G') \to \mathbb R^2$ be the map that takes every vertex to its image under the embedding obtained from this covering.

    Regardless of which of the three cases we are in, given any pair of integers, the map $\sigma_{m,n}\colon (x,y) \mapsto (x+2m,y+2n)$ preserves the embedding and thus induces an automorphism of $G'$.
    Note that the covering map $\phi\colon G' \to G$ in each of the three cases satisfies $\phi \circ \sigma_{m,n} = \phi$.

    Define an equivalence relation on vertices of $G'$ by $v \sim w$ if there is some $n \in \mathbb Z$ such that $\sigma_{0,n}(\iota(v)) = \iota(w)$, in other words, if the embeddings of $v$ and $w$ have the same $x$-coordinate, and their $y$-coordinates differ by a multiple of $2$. Define an auxiliary graph $G''$ whose vertices are the equivalence classes with respect to the relation $\sim$ with an edge between two of these equivalence classes if there is an edge connecting two elements contained in the two classes, respectively.

    We note that $G''$ is embedded in an infinite cylinder surface $\mathbb R^2/ (\{0 \times 2\mathbb Z\})$. Moreover, if $v \sim w$, then by the above observation $\phi(v) = \phi(w)$, and thus the map $\phi$ gives rise to a map $\psi\colon G'' \to G$. Since $\phi$ is a covering map, so is $\psi$.

    Finally, scaling the cylinder by $\frac 12$ we obtain the desired embedding.
\end{proof}

Throughout this paper, when considering covers as in the above lemma (or graphs as in the following remark), we call a double ray \emph{periodic} if it is invariant under some power $\sigma^k \colon (x,y) \mapsto (x+k,y)$  of the shift $\sigma$.

\begin{remark}
    \label{rmk:stripembedding}
    Let $G$ be one of the covers constructed in Lemma \ref{lem:cover} and let $P$ be a periodic geodesic. Let $H$ be obtained by `splitting $P$ in $G$'. More precisely, $H$ is obtained from $G$ as follows: First, replace every vertex $v_i$ of $P$ by two vertices $v_i^1$ and $v_i^2$. Connect $v_i^1$ to all neighbours of $v_i$ which come between $v_{i-1}$ and $v_{i+1}$ in the clockwise cyclic ordering induced by the embedding on the cylinder, and connect $v_i^2$ to all neighbours of $v_i$ which come between $v_{i+1}$ and $v_{i-1}$ in the clockwise cyclic ordering induced by the embedding. In other words, $v_i^1$ is connected to all neighbours of $v_i$ which are attached on one side of $P$ and $v_i^2$ is connected to all neighbours of $v_i$ which are attached on the other side.
    
    Then $H$ can be embedded in an infinite strip $\mathbb R \times [0,1]$ such that the two copies of $P$ obtained in the splitting process are embedded at the top and bottom of the strip, and (possibly after re-scaling) the shift $(x,y) \mapsto (x+1,y)$ induces an automorphism of $H$.
\end{remark}

\subsection{Cops and Robber Variants}

We briefly recall some variants of the cops and robber game which are relevant for this paper.

Cops and robber is a perfect information pursuit-evasion game played on  a finite connected undirected graph $G$. It is played between two players, one controlling a set of \emph{cops} $c_1, c_2, c_3, \dots$ and the other controlling a \emph{robber} $r$. By slight abuse of notation, $c_i$ refers both to the $i$-th cop and to the vertex of the graph this cop currently occupies. In particular, $c_i = x$ means that the current position of $c_i$ is the vertex $x$, and $c_i$ moves from $x$ to $y$ means that the position of the cop $c_i$ was $x$ before this move, and is $y$ afterwards. Occasionally, when we want to compare positions before and after a move, we denote positions before and after the move by $c_i$ and $c_i'$, respectively; this will always be explicitly stated. We follow analogous conventions for the robber.

The rules of the game are as follows: First, the cop player chooses starting positions for all cops, then the robber chooses a starting position. After that, the players alternate in taking moves. On each cop move, each cop $c_i$ may (but does not have to) move from its current position to an adjacent vertex, on a robber move $r$ may move to an adjacent vertex. The cops win if after a finite number of turns $c_i=r$ for some $i$ (we say that \emph{$c_i$ catches $r$}). The robber wins if this never happens. Call a graph $k$-cop win, if there is a winning strategy for $k$ cops. The cop number $c(G)$ of a graph $G$ is the least $k$ for which $G$ is $k$-cop win.

The \emph{$l$-cops move game} is a variant of the game described above where on each turn for the cops at most $l$ cops may move to adjacent vertices and the remaining cops stay at their current positions. The robber moves exactly as it would in the classical game of cops and robber. In the extreme case where $l=1$ this is known as \emph{lazy-cops and robber}. As above, we can define the \emph{$l$-cops move cop number $c_l(G)$} of a graph $G$ as the least number of cops for which a winning strategy on $G$ exists. Clearly, if $l \leq l'$, then the $l$-cops move cop number is greater than or equal to the $l'$-cops move cop number.

In~\cite{Lehner2021}, the following ``teleportation'' variant of the game was introduced to compare cop numbers of graphs with cop numbers of their respective covers. Let $G$ be a graph, and let $\hat G$ be a cover of $G$. Denote the covering map by $\phi$. Then \emph{T-cops and robber} is defined exactly like cops and robber except that on the cops' move, a cop located at some vertex $v$ can not only move to any vertex in the closed neighbourhood of $v$, but to any vertex in the closed neighbourhood of any $w$ with $\phi(w) = \phi(v)$; intuitively this means that a cop can \emph{teleport} to a vertex with the same image under $\psi$ before moving. For the purpose of the 2-cops move teleportation game, only teleporting from $v$ to any $w$ with $\phi(w) = \phi(v)$ does not count as having moved (since the projection of the respective cop's position in the original graph does not change). 

The following lemma can be proved in the exact same way as~\cite[Lemma 3.2]{Lehner2021}

\begin{lemma}
    \label{lem:teleportation}
    Let $G$ be a graph and let $\hat G$ be a cover of $G$. If $k$ cops can win the T-cops and robber game on $\hat G$ with at most $l$ cops moving at any given turn, then $k$ cops can win the (regular) cops and robber game on $G$ with at most $l$ cops moving at any given turn. 
\end{lemma}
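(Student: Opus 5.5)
The statement is Lemma~\ref{lem:teleportation}. I would prove it by a shadow strategy, following~\cite[Lemma 3.2]{Lehner2021}. The cops in $G$ imitate a winning T-strategy for the $k$ cops on $\hat G$. The robber's moves in $G$ are lifted to moves in $\hat G$ using the covering map $\phi$.

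The invariant I will maintain is a lifted robber position $\hat r \in \hat V$ and lifted cop positions $\hat c_1,\dots,\hat c_k$, with $\phi(\hat r) = r$ and $\phi(\hat c_i) = c_i$ after every move. At the start, the cops place $c_i := \phi(\hat c_i)$, where $\hat c_i$ are the initial positions prescribed by the T-strategy. When the robber picks a start $r$, choose any $\hat r \in \phi^{-1}(r)$; this exists since $\phi$ is surjective.

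The robber's move is handled by unique lifting. If the robber moves from $r$ to a neighbour $r'$ (or stays), then, since $\phi$ restricted to $N(\hat r)$ is a bijection onto $N(r)$, there is a unique $\hat r' \in N(\hat r)$ with $\phi(\hat r') = r'$; if the robber stays, $\hat r' = \hat r$. This is a legal robber move in $\hat G$, so we feed it to the T-strategy.

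The cops' move is handled as follows. The T-strategy moves each cop $\hat c_i$ to a vertex $\hat c_i'$ in the closed neighbourhood of some $w_i$ with $\phi(w_i) = \phi(\hat c_i) = c_i$. The real cop moves to $c_i' := \phi(\hat c_i')$. Since $\phi$ maps $N(w_i)$ onto $N(\phi(w_i))$, the vertex $c_i'$ lies in the closed neighbourhood of $c_i$, so the move is legal in $G$. The $l$-move restriction also transfers. By the convention that teleporting alone does not count, a cop counts as moving in the T-game exactly when $\hat c_i' \ne w_i$. In that case $c_i' \ne c_i$, because $\phi$ is injective on the closed neighbourhood of $w_i$ (a bijection on $N(w_i)$, with $w_i \notin N(w_i)$ and $\phi(w_i)\notin N(\phi(w_i))$ since the graph is simple). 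Conversely, $\hat c_i' = w_i$ gives $c_i' = c_i$. So the set of cops that move in $G$ is exactly the set of cops that move in the T-game, and it has size at most $l$.

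Capture transfers as well. The T-strategy wins, so after finitely many turns some $\hat c_i = \hat r$, and hence $c_i = \phi(\hat c_i) = \phi(\hat r) = r$.

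The only delicate point is the bookkeeping of which cops "move", which is exactly what the paper's convention on teleportation was designed for. One must also note that the T-strategy may react to the lifted robber positions, which is fine since the game has perfect information.
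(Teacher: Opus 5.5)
Your proof is correct and is essentially the argument the paper relies on: the paper gives no proof of its own and defers to the projection/lifting argument of \cite[Lemma 3.2]{Lehner2021}, where each robber move is lifted uniquely through the local bijection $\phi|_{N(\hat r)}$ and the cops play the image under $\phi$ of the T-strategy's moves. Your extra observation that $\phi$ is injective on closed neighbourhoods (because the graphs are simple), so that a cop's projected position changes exactly when it counts as moving under the teleportation convention, is the additional check needed to carry over the bound of $l$ moving cops.
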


Given a group $\Gamma$ acting on a graph $G$, we can define a game similar to T-cops and robber by allowing a cop located at some vertex $c$ to move to any vertex in the closed neighbourhood of any $c'$ for which there is some $\gamma \in \Gamma$ with $c' = \gamma (c)$. We call this game \emph{$\Gamma$-cops and robber}. If $\Gamma$ is generated by a single element $\sigma$, we write $\sigma$-cops and robber rather than $\langle \sigma\rangle$-cops and robber.

Now let $\hat G$ be the cover from Lemma~\ref{lem:cover} and let $\Gamma$ be the group generated by the shift $\sigma$. Since $\phi \circ \sigma = \phi$, all moves which are available to the cops in $\Gamma$-cops and robber are also available in T-cops and robber. Thus any winning strategy for the cops in the former is also a winning strategy for the latter. This is still true if we restrict to the $l$-cops-move game. Together with Lemma \ref{lem:teleportation} above, this implies the following.

\begin{lemma}
    \label{lem:sigmateleportation}
    Let $G$ be a graph embedded on the torus, projective plane or Klein bottle, let $\hat G$ be the cover from Lemma~\ref{lem:cover}, and let $\sigma$ be the shift automorphism. If $k$ cops can win the $\sigma$-cops and robber game on $\hat G$ with at most $l$ cops moving at any given turn, then $k$ cops can win the (regular) cops and robber game on $G$ with at most $l$ cops moving at any given turn. 
\end{lemma}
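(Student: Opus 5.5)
The statement is a direct combination of Lemma~\ref{lem:teleportation} with the observation that $\sigma$-moves are special T-moves. So the proof is a short reduction, and no new game-theoretic strategy is needed.

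Fix a winning strategy $S$ for $k$ cops in the $\sigma$-cops and robber game on $\hat G$ in which at most $l$ cops move per turn. We claim $S$ is also a legal and winning strategy for the cops in the T-cops and robber game on $\hat G$ with the covering map $\phi$ from Lemma~\ref{lem:cover}, again with at most $l$ cops moving per turn. Two things must be checked.

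\textbf{Every $\sigma$-move is a T-move.} Suppose a cop at $c$ moves to a vertex in the closed neighbourhood of $\sigma^m(c)$. Since $\phi\circ\sigma=\phi$, induction gives $\phi\circ\sigma^m=\phi$ for all $m\in\mathbb Z$; for negative $m$ apply the identity to $\sigma^{-m}$ and use that $\sigma$ is a bijection. Hence $\phi(\sigma^m(c))=\phi(c)$, so the move is available in the T-game.

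\textbf{The "moving" bookkeeping agrees.} In the $\sigma$-game a pure jump $c\mapsto\sigma^m(c)$ should not count as a move. In the T-game such a jump is likewise free, because $\phi$ is unchanged. So the number of cops that move each turn under $S$ is the same when $S$ is read as a T-strategy, and is at most $l$.

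The robber's options are identical in both games, and a capture is the same event in both. Hence $S$ wins the T-game on $\hat G$ with at most $l$ cops moving per turn. Lemma~\ref{lem:teleportation}, applied to the cover $\hat G$ of $G$, then yields a winning strategy for $k$ cops in the regular $l$-cops-move game on $G$.

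The only delicate point is the convention for $\sigma$-cops and robber. If the definition is read so that a pure $\sigma$-jump counts as a move, then $S$ only becomes more constrained, and the conclusion still holds, since T-moves are counted more leniently. I expect no real obstacle.

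One hypothesis needs attention: Lemma~\ref{lem:cover} is stated for finite $G$, whereas the earlier conventions allow infinite graphs. The present statement implicitly assumes $G$ is finite, as is needed to construct $\hat G$, and the proof simply takes that as given.
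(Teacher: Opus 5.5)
Your proposal is correct and follows the paper's argument: since $\phi\circ\sigma=\phi$, every move available in the $\sigma$-game is also available in the T-game, and it is counted the same way or more leniently. So a winning $l$-cops-move strategy for the $\sigma$-game is a winning strategy for the T-game, and Lemma~\ref{lem:teleportation} finishes the proof. Your extra checks (negative powers of $\sigma$, the move-counting convention, and the implicit finiteness of $G$) are sound and fill in details the paper leaves implicit.
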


\section{Periodic Geodesics and Bands}
\label{subsec:periodicbands}

Throughout this section, let $G$ be one of the covers from Lemma \ref{lem:cover} or obtained from splitting such a cover along a periodic geodesic as in Remark \ref{rmk:stripembedding}. This graph $G$ embeds either on the infinite cylinder $\mathbb R \times \mathbb R/\mathbb Z$ or on the infinite strip $\mathbb R \times [0,1]$. Since both surfaces are orientable, we may fix an embedding, thereby obtaining a (clockwise) cyclic order of the edges at each vertex. By convention, we refer to the directions decreasing or increasing the first coordinate as left and right, and the directions decreasing or increasing the second coordinate (in particular for the strip, but also locally in the cylinder) as down and up. We refer to a fundamental domain $\mathcal{C}_k := [k,k+1) \times X$ where $X$ is either $\mathbb R/\mathbb Z$ or $[0,1]$ as a \emph{cell} of the embedding. 

It is known that $G$ contains periodic geodesics, see~\cite{PolatWatkins1995}. Note that any geodesic intersects each cell in only finitely many vertices. In particular, any geodesic double ray $P = (v_i)_{i \in \mathbb Z}$ has the property that as $i\to \infty$ the first coordinate of the embedding of $v_i$ either goes to $\infty$ or to $-\infty$. In the first case we say that \emph{$P$ tends to the right}, in the second case we say that \emph{$P$ tends to the left}. All periodic geodesics are tacitly assumed to tend to the right.

Define an equivalence relation on the set of periodic geodesics by $P \sim Q$ if there is a bypath of $P$ containing a vertex of $Q$. We show that this is indeed an equivalence relation.

\begin{proposition}
    The relation $\sim$ is an equivalence relation.
\end{proposition}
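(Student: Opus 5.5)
The plan is to replace the definition of $\sim$ by a metric characterisation, which makes reflexivity trivial and reduces symmetry and transitivity to statements about distance functions. Write $P=(v_i)_{i\in\mathbb Z}$. A path $uBv$ with $u=v_i$, $v=v_j$, $i<j$, is a bypath of $P$ exactly when $|B| = j-i = d(v_i,v_j)$. Hence $P\sim Q$ if and only if some vertex $x$ of $Q$ satisfies
\[
d(v_i,x)+d(x,v_j)=d(v_i,v_j) \quad\text{for some } i<j .
\]
For every $x$ and all $m,n\ge 0$ the triangle inequality gives $d(x,v_{-m})+d(x,v_n)\ge n+m$. Moreover the quantity $\bigl(d(x,v_n)-n\bigr)+\bigl(d(x,v_{-m})-m\bigr)$ is nonincreasing in $n$ and in $m$ and is a nonnegative integer. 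I therefore define the \emph{defect}
\[
D_P(x)=\lim_{n,m\to\infty}\bigl(d(x,v_n)+d(x,v_{-m})-n-m\bigr)\in\mathbb Z_{\ge 0},
\]
and note that it is attained for all large $n,m$. Since any geodesic segment from $v_i$ to $v_j$ through $x$ extends along $P$ to one from $v_{-m}$ to $v_n$, this yields $P\sim Q \iff \min_{x\in Q} D_P(x)=0$.

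Reflexivity is then immediate, because $D_P$ vanishes on $P$ (equivalently, trivial bypaths contain vertices of $P$). Periodicity enters next. If $\sigma^k$ fixes both $P$ and $Q$, shifting indices by some $s$ on $P$, then $D_P$ is $\sigma^k$-invariant. Since each cell contains only finitely many vertices, $D_P$ restricted to $Q$ takes only finitely many values, so the minimum above is attained.

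For symmetry I will take $x\in Q$ with $D_P(x)=0$, i.e.\ $x$ lies on a geodesic $\gamma_{m,n}$ from $v_{-m}$ to $v_n$ for all large $m,n$, and produce a vertex of $P$ on a geodesic between two far-apart vertices of $Q$. The approach is a crossing argument in the cylinder or strip. Apply $\sigma^{kt}$ for large $|t|$ to get vertices $x_t=\sigma^{kt}(x)\in Q$ lying on geodesics between far-apart vertices of $P$. Form the path $Q' $ which follows $Q$ up to $x_{-t}$, then goes along a geodesic to a vertex of $P$, along $P$, and back along a geodesic to $x_t$. Comparing $|Q'|$ with $d_Q(x_{-t},x_t)$, and using that $Q$ is a geodesic, should force the relevant piece of $P$ to lie on a geodesic between $x_{-t}$ and $x_t$. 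The required length inequality is $d(x_{-t},v_a)+d(v_a,x_t)\le d(x_{-t},x_t)$, and I expect to obtain it from $D_P(x_{\pm t})=0$ together with the fact that $Q$ and $P$ are both periodic and tend to the right.

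Symmetry is the step I expect to be the main obstacle. A naive Busemann-function argument (that $b^\pm_P-b^\pm_Q$ is constant) cannot be true, since it would make all periodic geodesics equivalent. So the argument must genuinely use the position of $x$ between the far vertices of $P$. My first attempt will be a planarity/Jordan-curve argument on the strip obtained by splitting (Remark~\ref{rmk:stripembedding}): $P$ separates, and the geodesic through $x$ between far vertices of $P$ must cross or touch $Q$ in a controlled way.

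Transitivity should then follow by composing. Suppose $D_P(q)=0$ for some $q\in Q$ and $D_Q(r)=0$ for some $r\in R$. Take a geodesic from $v_{-m}$ to $v_n$ through $q$ and its shifts, which pass through far vertices of $Q$. Replace the middle segment of $Q$ by the geodesic through $r$ between those far vertices of $Q$. Additivity of lengths along geodesic concatenations then gives a geodesic between far vertices of $P$ passing through $r$, i.e.\ $D_P(r)=0$. This again uses the attainment of the limits and the $\sigma^k$-invariance to align the indices.
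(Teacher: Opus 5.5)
\textbf{The gap is symmetry.} You only name the target inequality $d(x_{-t},v_a)+d(v_a,x_t)\le d(x_{-t},x_t)$ and say you expect to derive it from a Jordan-curve or crossing argument, but you never derive it. No topology is needed. Your defect reformulation is correct ($D_P(x)=0$ iff $x$ lies on a bypath of $P$), but it does not supply the missing idea either.

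The missing idea is that two bypaths of $P$ lying over disjoint intervals of $P$ can be used \emph{simultaneously}:
\begin{itemize}
\item Suppose $z\in Q$ lies on a bypath $xBy$ of $P$. Choose $k$ so that $\sigma^k$ preserves both $P$ and $Q$, and $\sigma^k(x)$ comes after $y$ on $P$.
\item Then the double ray $PxByP\sigma^k(x)\sigma^k(B)\sigma^k(y)P$ is still a geodesic. Indeed, for $a$ before $x$ and $b$ after $\sigma^k(y)$ on $P$, its segment from $a$ to $b$ has length $|aPb|=d(a,b)$.
\item Hence its subpath $zByP\sigma^k(x)\sigma^k(B)\sigma^k(z)$ is a geodesic from $z$ to $\sigma^k(z)$. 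It therefore has the same length as $zQ\sigma^k(z)$, so it is a bypath of $Q$, and it contains $y\in P$.
\end{itemize}
In your notation, this gives your inequality, with equality, for every $v_a$ between the end of the bypath through $x_{-t}$ and the start of the bypath through $x_t$. You only need $t$ large enough that these two intervals of $P$ are disjoint. This is exactly the paper's symmetry argument.

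\textbf{Transitivity has the same hole.} Consider the concatenation of a geodesic from $v_{-m}$ to $q$, the (modified) segment of $Q$ from $q$ to $\sigma^k(q)$, and a geodesic from $\sigma^k(q)$ to $\sigma^k(v_n)$, with $k$ large. It is a geodesic precisely when $d(q,\sigma^k(q))$ equals the length of the detour $q\to v_n\to\sigma^k(v_{-m})\to\sigma^k(q)$ through $P$. \emph{Additivity of lengths} does not give this equality; it is the statement established in the symmetry step. Alternatively, it follows from Lemma~\ref{lem:samespeedgeodesics}, whose proof does not use the proposition. Once this fact is in place, your construction becomes the paper's bypath $B'=x_1B_1z_1Qx_2B_2y_2Q\sigma^k(z_1)\sigma^k(B_1)\sigma^k(y_1)$.
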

\begin{proof} For reflexivity, note that each vertex of a periodic geodesic $P$ is a (trivial) bypath to $P$, so $P \sim P$. 

We now prove symmetry, see Figure\ref{fig:periodic-geodesic-equivalence-symmetry} for a sketch. Consider periodic geodesics such that $P \sim Q$. Let $xBy$ be a bypath of $P$ which meets $Q$, and let $z$ be a vertex of $B$ on $Q$. Because $P$ and $Q$ are periodic and $B$ is finite, there is some $k$ such that the shift $\sigma^k$ preserves both $P$ and $Q$, and $\sigma^k(x)$ comes after $y$ on $P$. Since $B$ and $\sigma(B)$ are both bypaths of $P$, the double ray $PxByP\sigma^k(x)\sigma^k(B)\sigma^k(y)P$ is a geodesic. Thus $B' = zByP\sigma^k(x)\sigma^k(B)\sigma^k(z)$ is a geodesic from $z$ to $\sigma^k(z)$, and therefore it has the same length as $zQ\sigma^k(z)$. Therefore $B'$ is a bypath of $Q$. Since $B'$ contains a vertex of $P$, this shows that $Q \sim P$.

\begin{figure}
\centering
\begin{tikzpicture}[every node/.style={font=\small}]

\draw (0,0) -- (13,0);
\draw (0,1.6) -- (13,1.6);

\node[left] at (0,0) {$P$};
\node[left] at (0,1.6) {$Q$};

\draw[thick] (1.8,0)
  .. controls (2.1,0.9) and (2.6,1.6) .. (3.0,1.6)
  .. controls (3.4,1.6) and (3.9,0.9) .. (4.2,0);
\node at (3.0,0.75) {$B_1$};
\node at (1.8,-0.5) {$x$};
\node at (4.2,-0.5) {$y$};

\draw[thick] (9.0,0)
  .. controls (9.3,0.9) and (9.8,1.6) .. (10.2,1.6)
  .. controls (10.6,1.6) and (11.1,0.9) .. (11.4,0);
\node at (10.3,0.75) {$\sigma^k(B_1)$};
\node at (9,-0.5) {$\sigma^k(x)$};
\node at (11.5,-0.5) {$\sigma^k(y)$};

\fill (1.8,0) circle (1.5pt);
\fill (4.2,0) circle (1.5pt);
\fill (3.0,1.6) circle (1.5pt) node[above=2pt] {$z_1$};
\fill (9.0,0) circle (1.5pt);
\fill (11.4,0) circle (1.5pt);
\fill (10.2,1.6) circle (1.5pt) node[above=2pt] {$\sigma^k(z_1)$};

\draw[thick, dashed] (3.0,1.7)
  .. controls (3.4,1.7) and (3.9,1.3) .. (4.3,0.15);

\draw[thick, dashed] (4.3,0.15) -- (8.9,0.15);

\draw[thick,dashed] (8.9, 0.15)
  .. controls (9.3,1.1) and (9.8,1.8) .. (10.2,1.7);

\node at (6.5, 0.6) {$B'$};

\end{tikzpicture}
\caption{Sketch of the proof of symmetry of the relation $\sim$. The dashed path is a bypath of $Q$ which meets $P$.}
\label{fig:periodic-geodesic-equivalence-symmetry}
\end{figure}

Finally, the proof of transitivity is sketched in Figure \ref{fig:periodic-geodesic-equivalence-transitivity}. Consider periodic geodesics, $P$, $Q$, and $R$ such that $P \sim Q$ and $Q \sim R$. Let $x_1B_1y_1$ be a bypath of $P$ containing a vertex $z_1$ of $Q$ and let $x_2B_2y_2$ be a bypath of $Q$ containing a vertex $z_2$ of $R$. Without loss of generality assume that $x_2$ comes after $z_1$ on $Q$, otherwise replace $B_2$ by $\sigma^k(B_2)$ for suitable $k$.
As before, we can find some $k$ such that $\sigma^k$ preserves $P$, $Q$ and $R$, and so that $\sigma^k(x_1)$ comes after $y_1$ on $P$ and $\sigma^k(z_1)$ comes after $y_2$ on $Q$. It is not hard to see that $B' = x_1B_1z_1Qx_2B_2y_2Q\sigma^k(z_1)\sigma^k(B_1)\sigma^k(y_1)$ is a bypath of $P$ which contains a vertex of $R$.
\begin{figure}
\centering
\begin{tikzpicture}[every node/.style={font=\small}]

\draw (0,0) -- (13,0);
\draw (0,1.6) -- (13,1.6);
\draw (0,3.2) -- (13,3.2);

\node[left] at (0,0) {$P$};
\node[left] at (0,1.6) {$Q$};
\node[left] at (0,3.2) {$R$};

\draw[thick] (1.8,0)
  .. controls (2.1,0.9) and (2.6,1.6) .. (3.0,1.6)
  .. controls (3.4,1.6) and (3.9,0.9) .. (4.2,0);
\node at (3.0,0.75) {$B_1$};

\draw[thick] (5.0,1.6)
  .. controls (5.4,2.5) and (6.0,3.2) .. (6.5,3.2)
  .. controls (7.0,3.2) and (7.6,2.5) .. (8.0,1.6);
\node at (6.5,2.4) {$B_2$};

\draw[thick] (9.0,0)
  .. controls (9.3,0.9) and (9.8,1.6) .. (10.2,1.6)
  .. controls (10.6,1.6) and (11.1,0.9) .. (11.4,0);
\node at (10.3,0.75) {$\sigma^k(B_1)$};

\fill (1.8,0) circle (1.5pt);
\fill (4.2,0) circle (1.5pt);
\fill (3.0,1.6) circle (1.5pt) node[above=2pt] {$z_1$};
\fill (5.0,1.6) circle (1.5pt);
\fill (8.0,1.6) circle (1.5pt);
\fill (6.5,3.2) circle (1.5pt) node[above=2pt] {$z_2$};
\fill (9.0,0) circle (1.5pt);
\fill (11.4,0) circle (1.5pt);
\fill (10.2,1.6) circle (1.5pt) node[above=2pt] {$\sigma^k(z_1)$};

\draw[thick, dashed] (0,0.15) -- (1.7,0.15);
\draw[thick, dashed] (3,1.75) -- (4.9,1.75);
\draw[thick, dashed] (8.1,1.75) -- (10.2,1.75);
\draw[thick, dashed] (11.5,0.15) -- (13,0.15);

\draw[thick,dashed] (1.75,0.15)
  .. controls (2.1,1.1) and (2.6,1.8) .. (3.0,1.7);
\draw[thick,dashed] (4.95,1.75)
  .. controls (5.4,2.7) and (6.0,3.4) .. (6.5,3.3)
  .. controls (7.0,3.3) and (7.6,2.7) .. (8.05,1.7);
\draw[thick,dashed] (10.2,1.7)
  .. controls (10.6,1.8) and (11.1,1.1) .. (11.5,0.1);
\node at (0.7,0.6) {$B'$};

\end{tikzpicture}
\caption{Sketch of the proof of transitivity of the relation $\sim$. The dashed path is a bypath of $P$ which meets $R$.}
\label{fig:periodic-geodesic-equivalence-transitivity}
\end{figure}
\end{proof}

Let $\mathcal C$ be an equivalence class with respect to the equivalence relation $\sim$. The \emph{band}, $\mathcal{B}$, corresponding to $\mathcal C$ is defined as the set of all vertices which lie on some geodesic contained in $\mathcal C$. If $P \in \mathcal{C}$, we also call $\mathcal B$ the band of $P$. In other words, the band of $P$ is the union of all periodic geodesics which contain a vertex of some bypath of $P$.

\begin{proposition}\label{Prop2.3}
    \begin{enumerate}
        \item Each band is connected, and distinct bands are disjoint.
        \item Every band is periodic, that is, there is some $k$ such that $\mathcal B = \sigma^k(\mathcal B)$.
    \end{enumerate}
\end{proposition}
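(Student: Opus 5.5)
The three assertions are of quite different difficulty. Disjointness and periodicity follow almost directly from the definitions. Connectivity needs a small construction that upgrades a bypath into a periodic geodesic, and that construction is the main obstacle.

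\emph{Disjointness.} Suppose a vertex $z$ lies on $P\in\mathcal C_1$ and on $Q\in\mathcal C_2$. The single vertex $z$ is a trivial bypath of $P$ containing a vertex of $Q$, so $P\sim Q$ and $\mathcal C_1=\mathcal C_2$. Hence distinct bands are disjoint.

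\emph{Periodicity.} Since $\sigma$ is an automorphism, it maps periodic geodesics to periodic geodesics (tending to the right) and bypaths of $P$ to bypaths of $\sigma(P)$. So $\sigma$ preserves $\sim$ and permutes the equivalence classes. Pick $P\in\mathcal C$ and $k$ with $\sigma^k(P)=P$. Then $\sigma^k(\mathcal C)$ is a class containing $P$, so $\sigma^k(\mathcal C)=\mathcal C$. Since the band is the union of the geodesics in the class, $\sigma^k(\mathcal B)=\mathcal B$.

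\emph{Connectivity.} Each geodesic in $\mathcal C$ is connected, so it suffices to show that for $P\sim Q$ in $\mathcal C$ there is a connected subset of $\mathcal B$ meeting both. Let $uBv$ be a bypath of $P$ through a vertex $z$ of $Q$. The difficulty is that $B$ itself need not lie in $\mathcal B$, because $PuBvP$ need not be periodic. I would repair this as in the symmetry proof, but iterated in both directions. Choose $k$ with $\sigma^k(P)=P$ and $k$ so large that the translates $B_i:=\sigma^{ik}(B)$, with endpoints $u_i,v_i$, are pairwise vertex-disjoint and appear on $P$ in the order $u_0,v_0,u_1,v_1,\dots$. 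This is possible because $B$ is finite and each cell contains only finitely many vertices. Let $R$ be the double ray that follows $P$ but replaces every segment $u_iPv_i$ by $B_i$. Then $R$ is $\sigma^k$-invariant.

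The key step is to show that $R$ is a geodesic. Take vertices $a,b$ of $R$, say $a\in B_i$ and $b\in B_j$ with $i\le j$; the cases where $a$ or $b$ lie on $P$ are similar. Each $B_i$ has length $d(u_i,v_i)$, so $|u_iRv_j|=|u_iPv_j|=d(u_i,v_j)$. Since $B_i$ lies on the geodesic $Pu_iB_iv_iP$, we have $d(u_i,a)=|u_iRa|$, and similarly $d(b,v_j)=|bRv_j|$. The triangle inequality then gives
\[ d(a,b)\ \ge\ d(u_i,v_j)-d(u_i,a)-d(b,v_j)\ =\ |aRb|, \]
so $R$ is a geodesic. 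Choosing $k$ large should also let me arrange that $R$ tends to the right.

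Now $B$ itself is a bypath of $P$ containing vertices of $R$, so $R\sim P$ and $R\in\mathcal C$. Therefore $B\subseteq R\subseteq\mathcal B$. Finally, $P\cup B$ is connected, lies in $\mathcal B$, and meets $Q$ at $z$, which gives connectivity of $\mathcal B$.
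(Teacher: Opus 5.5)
Your proposal is correct and follows the paper's proof. Disjointness uses a trivial bypath, periodicity uses $\sigma^k$ fixing a geodesic of the class, and connectivity replaces the segments $u_iPv_i$ of $P$ by the shifts $\sigma^{ik}(B)$ to obtain a periodic geodesic containing $B$; this last step is exactly the paper's one-line sketch, and your triangle-inequality computation supplies the check that the paper omits. (You do not need to choose $k$ large to make $R$ tend to the right: this is automatic, since $R$ contains the pieces $v_iPu_{i+1}$ of $P$ for all $i$.)
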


\begin{proof}
    In each band, all pairs of vertices $x,y$ lie on geodesics. If they lie on the same geodesic, then they are connected by a finite geodesic subpath of that geodesic. If they are in distinct geodesics, then these geodesics are connected by a bypath and all vertices on this bypath are necessarily contained in the band since we can construct another periodic geodesic by replacing segments of one geodesic by suitable shifts of this bypath.

    If two distinct bands are not disjoint, then they contain geodesics which intersect. Any vertex in the intersection of these geodesics is a (trivial) bypath of one of them containing a vertex of the other, showing that they are equivalent, a contradiction.

    For the second part, note that if $\mathcal B$ is a band, then $\sigma(\mathcal B)$ is also a band. Further note that there is some $k$ such that $\mathcal B \cap \sigma^k(\mathcal B)$ is non-empty (take for instance the period of some geodesic in the corresponding equivalence class). 
\end{proof}

\begin{lemma}
    \label{lem:ray-q-to-P}
    Given a periodic geodesic $P= (v_i)_{i \in \mathbb Z}$ and a vertex $v \in V(G)$, there is a geodesic ray $Q$ and an integer $k$ such that $Q$ starts at $v$ and eventually follows $P$. 
\end{lemma}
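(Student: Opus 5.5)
Let $k$ be a period of $P$, so $\sigma^k(v_i)=v_{i+m}$ for all $i$, where $m\ge 1$ because $P$ tends to the right. The plan is a compactness argument on geodesics from $v$ to vertices of $P$ far to the right. The goal is a geodesic from $v$ to $v_n$ ($n$ large) whose final segment already coincides with $P$ over a long stretch. Extending that segment along $P$ should then keep it geodesic.

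First, consider the function $f(i)=d(v,v_i)-i$.
\begin{itemize}
\item Because $P$ is a geodesic, the triangle inequality gives $d(v,v_{i+1})\le d(v,v_i)+1$. Hence $f$ is non-increasing in $i$.
\item The triangle inequality also gives $d(v,v_i)\ge d(v_0,v_i)-d(v,v_0)=i-d(v,v_0)$ for $i\ge 0$. Hence $f$ is bounded below by $-d(v,v_0)$.
\end{itemize}
A non-increasing integer-valued function that is bounded below is eventually constant. So there is $N$ with $d(v,v_i)=d(v,v_N)+(i-N)$ for all $i\ge N$.

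Now set $Q=Q_0v_NP$, where $Q_0$ is any geodesic from $v$ to $v_N$; the first step is to check that $Q$ is a geodesic ray.
\begin{itemize}
\item Each vertex $v_i$ with $i\ge N$ lies at distance exactly $d(v,v_N)+(i-N)$ from $v$. This is precisely its position along $Q$.
\item $Q$ is a path: if some $v_i$ with $i>N$ also lay on $Q_0$, then $d(v,v_i)\le d(v,v_N)$, contradicting the equality above.
\item The distance from $v$ to every vertex of $Q$ equals its index on $Q$. On $Q_0$ this holds because $Q_0$ is a geodesic; on the tail it is the equality above.
\end{itemize}
For an arbitrary pair of vertices $x,y$ on $Q$, there are three cases.
\begin{itemize}
\item Both on $Q_0$: $Q_0$ is a geodesic.
\item Both on $v_NP$: $P$ is a geodesic.
\item $x\in Q_0$ and $y=v_i$: the triangle inequality gives $d(x,v_i)\ge d(v,v_i)-d(v,x)$. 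The right-hand side is exactly the length of $xQv_i$, so the path is geodesic between them.
\end{itemize}
Therefore $Q$ is a geodesic ray starting at $v$ that eventually follows $P$. The integer $k$ in the statement can be taken to be the index $N$ from which $Q$ follows $P$, or equivalently the period used above.

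The main (mild) obstacle is getting the eventual constancy of $f$. If the bound above is deemed too loose, an alternative is to use periodicity: $d(v,\sigma^{jk}(v_0))$ and $d(\sigma^{-jk}(v),v_0)$ differ by at most a constant. Local finiteness of cells could also be used if needed. With the monotone-and-bounded argument, however, periodicity is not even essential, and the lemma holds for any geodesic double ray tending to the right.
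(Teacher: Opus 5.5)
Your proof is correct and is essentially the paper's argument. The paper also starts from $d(v,v_{i+1})\le d(v,v_i)+1$ and shows that equality must hold for all large $i$; it does this by contradiction with $v_0Pv_i$ being a shortest path, which is the same as your explicit lower bound $d(v,v_i)-i\ge -d(v,v_0)$. It then concatenates a geodesic from $v$ to $v_N$ with $v_NP$ — your explicit check that this concatenation is a geodesic ray fills in a step the paper only asserts, and you are right that periodicity of $P$ is never used.
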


\begin{proof}
    Note that $d(v,v_{i+1}) \leq d(v,v_{i}) + 1$ for every $i \in \mathbb N$.
    If there is some $k \in \mathbb N$ such that equality holds for each $i \geq k$, then $d(v,v_i) = d(v,v_k) + (i-k)$ for all $i \geq k$, and therefore the ray obtained by first taking a geodesic from $v$ to $v_k$ and then following $P$ is a geodesic with the desired properties.

    Hence assume that there is no such $k$. This implies that there are infinitely many $i$ such that $d(v,v_{i+1}) \leq d(v,v_{i})$, and thus for every $j \in \mathbb N$ there is some $i$ for which $d(v,v_{i}) < d(v,v_0) + i - j$. In particular, there is some $i \in \mathbb N$ for which $d(v,v_{i}) < d(v,v_0) + i - 2 d(v,v_0)$ and thus $i > d(v,v_{i}) + d(v,v_0)$. But then $v_0Pv_i$ would not be a shortest path, contradicting the assumption that $P$ is a geodesic.
\end{proof}

\begin{lemma}
    \label{lem:samespeedgeodesics}
    Let $P_1$ and $P_2$ be periodic geodesics, and let $v_1 \in P_1$ and $v_2 \in P_2$. If $\sigma^k$ preserves both $P_1$ and $P_2$, then $d(v_1,\sigma^k(v_1)) = d(v_2,\sigma^k(v_2))$.
\end{lemma}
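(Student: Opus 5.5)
Write $\ell_j := d(v_j,\sigma^k(v_j))$ for $j=1,2$. The plan is to compare distances between vertices of $P_1$ and $P_2$ that lie far apart along the shift direction: both $\ell_1$ and $\ell_2$ measure the same asymptotic ``speed'' of moving by $\sigma^k$, and the triangle inequality will force them to coincide.

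First, since $P_j$ is a geodesic double ray invariant under $\sigma^k$ (and tending to the right), $\sigma^k$ acts on $P_j$ as a translation along it. Hence $\sigma^{nk}(v_j)$ lies on $P_j$ and the subpath $v_jP_j\sigma^{nk}(v_j)$ consists of $n$ consecutive translates of $v_jP_j\sigma^k(v_j)$. Because $P_j$ is a geodesic, this gives
\[ d(v_j,\sigma^{nk}(v_j)) = n\,\ell_j \qquad \text{for all } n\in\mathbb N. \]
Here I would briefly check that $\sigma^k$ cannot reverse the orientation of $P_j$; this follows because $\sigma^k$ moves first coordinates by $+k$ and $P_j$ tends to the right, so $\sigma^k$ maps $v_i$ to some $v_{i+m}$ with $m>0$ (for $k>0$; the case $k<0$ is symmetric and $k=0$ is trivial).

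Next, set $D := d(v_1,v_2)$, which is finite because $G$ is connected. Since $\sigma^{nk}$ is an automorphism, we also have $d(\sigma^{nk}(v_1),\sigma^{nk}(v_2)) = D$. The triangle inequality then yields
\[ n\ell_1 = d(v_1,\sigma^{nk}(v_1)) \le d(v_1,v_2) + d(v_2,\sigma^{nk}(v_2)) + d(\sigma^{nk}(v_2),\sigma^{nk}(v_1)) = n\ell_2 + 2D . \]
Dividing by $n$ and letting $n\to\infty$ gives $\ell_1\le\ell_2$, and exchanging the roles of $P_1$ and $P_2$ gives $\ell_2 \le \ell_1$. Therefore $\ell_1=\ell_2$, as claimed.

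The only step that needs any care is the first one, namely that $\sigma^k$ restricted to $P_j$ is a translation in the positive direction, so that the distances add linearly. Everything else is the triangle inequality.
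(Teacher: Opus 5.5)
Your proof is correct and is essentially the paper's argument: you use linearity of $d(v_j,\sigma^{nk}(v_j))$ along the periodic geodesic, then the triangle inequality through $v_2$ and $\sigma^{nk}(v_2)$ with additive error $2d(v_1,v_2)$. The only cosmetic difference is the last step. The paper uses integrality of distances and takes $n > 2d(v_1,v_2)$, whereas you divide by $n$ and let $n\to\infty$.
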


\begin{proof}
    We first observe that  $d(v_1,\sigma^{ik}(v_1)) = i d(v_1,\sigma^k(v_1))$ for every $i$ because all vertices $\sigma^{ik}(v_1)$ lie on the periodic geodesic $P_1$, and similarly $d(v_2,\sigma^{ik}(v_2)) = i d(v_2,\sigma^k(v_2))$. We hence obtain
    \begin{multline*}
        i d(v_1,\sigma^k(v_1)) = d(v_1,\sigma^{ki}(v_1)) \leq d(v_1,v_2) + d(v_2,\sigma^{ki}(v_2)) + d(\sigma^{ki}(v_2),\sigma^{ki}(v_1))\\\
        = 2 d(v_1,v_2) + d(v_2,\sigma^{ki}(v_2)) = 2 d(v_1,v_2) + i d(v_2,\sigma^k(v_2)).
    \end{multline*}
    Since distances in graphs are integer valued, if $i > 2 d(v_1,v_2)$, then this implies that $d(v_1,\sigma^k(v_1)) \leq d(v_2,\sigma^k(v_2))$. An analogous argument shows the converse inequality.
\end{proof}

The next lemma provides a tool for constructing geodesics connecting different bands, see Figure \ref{fig:switchgeodesic} for a sketch.
\begin{figure}
\centering
\begin{tikzpicture}[every node/.style={font=\small}]

\draw (0,0) -- (13,0);
\draw (0,1.6) -- (13,1.6);
\draw (0,3.2) -- (13,3.2);

\node[left] at (0,0) {$P_1$};
\node[left] at (0,1.6) {$P_2$};
\node[left] at (0,3.2) {$P_3$};

\fill (1.8,0) circle (1.5pt) node[below=2pt] {$x$};
\fill (3.0,1.6) circle (1.5pt) node[above=2pt] {$y$};
\fill (6.5,3.2) circle (1.5pt) node[above=2pt] {$z$};

\draw[very thick] (0,-1)
  .. controls (0.3,-0.1) and (0.8,0) .. (1,0);
\draw[very thick] (1,0) -- (1.8,0);
\draw[very thick] (3,1.6) -- (5,1.6);
\draw (5,1.6) -- (13,1.6);
\fill (5.0,1.6) circle (1.5pt);

\draw[very thick] (1.8,0)
  .. controls (2.1,0.9) and (2.6,1.6) .. (3.0,1.6);
\draw[very thick] (5,1.6)
  .. controls (5.4,2.5) and (6.0,3.2) .. (6.5,3.2);
\draw[very thick] (6.5,3.2) -- (13,3.2);
\node at (5,2.4) {$Q$};

\draw[dashed] (0,0.15) -- (1.7,0.15);
\draw[dashed] (1.7, 0.15)
  .. controls (2.1,1.1) and (2.6,1.8) .. (3.0,1.7);
\draw[dashed] (3.0,1.75) -- (13,1.75);

\end{tikzpicture}
\caption{The setup of Lemma~\ref{lem:switchgeodesic}: $P_1$, $P_2$, and $P_3$ are periodic geodesics, and $Q$ is a geodesic containing vertices $x\in P_1$ and $y\in P_2$ and eventually following $P_3$. The lemma states that the dashed line is a geodesic.}
\label{fig:switchgeodesic}
\end{figure}

\begin{lemma}
\label{lem:switchgeodesic}
    Let $P_1$, $P_2$, and $P_3$ be periodic geodesics. Let $Q$ be a geodesic eventually following $P_3$ which contains a vertex $x$ on $P_1$ and $y$ on $P_2$. Then $P_1xQyP_2$ is a geodesic.
\end{lemma}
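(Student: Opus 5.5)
The plan is to interpret $P_1xQyP_2$ as the double ray that follows $P_1$ from the far left up to $x$, then $Q$ from $x$ to $y$, then $P_2$ from $y$ to the far right. I assume, as in Figure~\ref{fig:switchgeodesic}, that $x$ precedes $y$ on $Q$ and that $Q$ is oriented towards its tail on $P_3$; all geodesics tend to the right. Since a subpath of a geodesic is a geodesic, it suffices to find, for arbitrarily far vertices $a$ on the $P_1$-part and $b$ on the $P_2$-part, that $d(a,b)$ equals the length of the path between them. Any pair of vertices on the double ray lies inside such a segment $aP_1xQyP_2b$, and the path length always bounds the distance from above, so only the lower bound $d(a,b)\ge |aP_1xQyP_2b|$ needs proof.

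First I fix $k$ such that $\sigma^k$ preserves $P_1$, $P_2$ and $P_3$. By Lemma~\ref{lem:samespeedgeodesics}, $s:=d(v,\sigma^k(v))$ is the same for every vertex $v$ on any of the three geodesics. Since the $P_i$ tend to the right, $\sigma^k$ moves each $P_i$ forward by exactly $s$ positions. I take $a=\sigma^{-mk}(x)$ and $b=\sigma^{nk}(y)$ for large $m,n$. These lie arbitrarily far out, and the length of the path between them is $ms+d(x,y)+ns$, because $xQy$ is geodesic.

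For the lower bound I pick an auxiliary vertex $z$ far out on the tail of $Q$ that lies on $P_3$. I choose $z$ so far that $\sigma^{-nk}(z)$ is still on this tail and after $y$, which is possible since the tail is a ray of the $\sigma^k$-invariant geodesic $P_3$. Two computations then follow.

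\textbf{Step 1.} Since $\sigma$ is an automorphism, $d(a,z)=d(x,\sigma^{mk}(z))$. The vertex $\sigma^{mk}(z)$ lies on the tail of $Q$, $ms$ steps beyond $z$. Because $Q$ is a geodesic containing $x$, $y$, $z$, $\sigma^{mk}(z)$ in this order, we get $d(a,z)=d(x,z)+ms=ms+d(x,y)+d(y,z)$.

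\textbf{Step 2.} Similarly, $d(b,z)=d(y,\sigma^{-nk}(z))$. The vertex $\sigma^{-nk}(z)$ lies on $Q$ between $y$ and $z$, exactly $ns$ steps before $z$. Hence $d(b,z)=d(y,z)-ns$.

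The triangle inequality then gives
\[
d(a,b)\ge d(a,z)-d(b,z)=ms+d(x,y)+ns,
\]
which is the required bound. This completes the argument.

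The main obstacle is keeping the order and orientation bookkeeping consistent. One must check that $\sigma^k$ shifts forward along each $P_i$, and that the relevant vertices appear on $Q$ in the order $x,y,\sigma^{-nk}(z),z,\sigma^{mk}(z)$, so that distances can be read off along $Q$. If the statement is meant to allow $y$ before $x$ on $Q$, I would reverse roles, or note that the configuration forces this ordering. Otherwise the proof is just the triangle-inequality trick through a far point on the common tail.
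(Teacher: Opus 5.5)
Your proof is correct, and it takes a more direct route than the paper. The paper argues by contradiction. It fixes a vertex $p$ of $P_1$ before $x$, assumes a shortcut $S$ from $p$ to some vertex $v_i$ of $xQyP_2$, and reduces to the case $v_i\in P_2$. It then splices the shifted copy $\sigma^k(S)$ into the path $xP_1\sigma^k(p)\sigma^k(S)\sigma^k(v_i)P_2\sigma^{2k}(y)\sigma^{2k}(Q)\sigma^{2k}(z)$, which it shows is shorter than $d(x,\sigma^{2k}(z))$ as read off along $Q$.

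You instead prove the lower bound $d(a,b)\ge d(a,z)-d(b,z)$ outright, computing both terms exactly by transporting them onto $Q$ with powers of $\sigma^{k}$. Both proofs rest on the same two facts: Lemma~\ref{lem:samespeedgeodesics}, which makes all three geodesics advance by the same $s$ under $\sigma^k$, and the fact that distances between vertices of $Q$, including its $P_3$-tail, can be read off along $Q$.

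Your version has some advantages:
\begin{itemize}
\item it needs no case reduction and no contradiction;
\item it treats the two ends symmetrically;
\item since the walk $aP_1xQyP_2b$ has length exactly $d(a,b)$, it shows for free that the concatenation repeats no vertex, which the paper leaves implicit.
\end{itemize}
Restricting to the endpoints $\sigma^{-mk}(x)$ and $\sigma^{nk}(y)$ is harmless, because every finite subpath lies in such a segment and subpaths of geodesics are geodesics.

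Two small remarks. First, the claim that $\sigma^k$ acts on each $P_i$ as a forward translation rather than a reflection deserves a one-line justification. The map $\sigma^{2k}$ moves every vertex, so it cannot fix $P_i$ pointwise, which rules out a reflection. The direction of the translation is then forced by $P_i$ tending to the right. Second, $x$ preceding $y$ on $Q$ is part of what the notation $P_1xQyP_2$ means, and the paper assumes it too. So your ``reverse roles'' fallback is unnecessary, and it would in any case prove a different statement.
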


\begin{proof}
Enumerate the vertices of $xQyP_2$ as $(x = v_0, v_1, v_2, \dots)$. Let $p$ be any vertex of $P_1$ which precedes $x$.

We claim that $d(p,v_i) = d(p,x)+i$. Since $P_1xQv_i$ or $P_1xQyP_2v_i$ is a path of this length, the only way this could fail is if $d(p,v_i) < d(p,x)+i$. In this case, let $S$ be a geodesic from $p$ to $v_i$. Replacing the subpath $P_1xQyP_2$ from $p$ to $v_i$ by $S$ we see that $d(p,v_j) < d(p,v_0)+j$ for all $j \geq i$, so we may without loss of generality assume that $v_i \in P_2$. Let $k$ be such that $\sigma^k$ preserves $P_1$, $P_2$, and $P_3$, and $\sigma^k(p)$ comes after $x$ on $P_1$ and $\sigma^k(y)$ comes after $v_i$ on $P_2$. Let $z$ be a vertex on $Q$ after which $Q$ follows $P_3$. Using Lemma \ref{lem:samespeedgeodesics}, and the fact that $P_1$, $P_2$, and $P_3$ are geodesics we compute
\[
d(z,\sigma^k(z)) = d(p,\sigma^k(p)) = d(p,x) + d(x,\sigma^k(p)),
\]
and 
\[
    d(z,\sigma^k(z)) = d(y,\sigma^k(y))  = d(y,v_i) + d(v_i,\sigma^k(y)).
\]
Moreover, $d(x,y) + d(y,v_i) = i$ since $P_2$ and $Q$ are geodesics. Now, using the inequality $d(p,v_i) < d(p,x)+i$ we obtain
\begin{align*}
    d(x,\sigma^{2k}(z)) 
    &= d(x,y) + d(y,z) + d(z,\sigma^k(z)) + d(z,\sigma^k(z))\\
    &= d(x,y) + d(y,z) + d(x,\sigma^k(p)) + d(p,x) + d(v_i,\sigma^k(y))+ d(y,v_i)\\
    &> i + (d(p,v_i) - i) +  d(y,z) + d(x,\sigma^k(p)) + d(v_i,\sigma^k(y))\\
    & = d(x,\sigma^k(p)) + d(p,v_i) + d(v_i,\sigma^k(y)) + d(y,z).
\end{align*}
The last expression in this computation is the length of the path
\[xP_1\sigma^k(p) \sigma^k(S)\sigma^k(v_i) P_2 \sigma^{2k}(y)\sigma^{2k}(Q)\sigma^{2k}(z),
\]
but this path cannot be shorter than the distance between its endpoints, yielding a contradiction.
\end{proof}

We call a path $P = (p_i)_{i \in I}$ \emph{$\mu$-almost periodic} if there exists a pair of constants $m,l \in \mathbb N$ with $l < \mu$ such that $\sigma^{m}(p_i) = p_{i+l}$ whenever $\inf(I) + \mu < i < \sup(I)-\mu-l$.
Note that a ray is $\mu$-almost periodic if and only if every initial segment is $\mu$-almost periodic, and a double ray is $\mu$-almost periodic if and only if it is periodic. We call $l$ a period of the almost periodic path, and we say that the segment of $P$ between $p_i$ and $p_{i+l}$ for $\mu < i < N-\mu-l$ is a periodic piece.

We  also need the following notion of most counter-clockwise geodesics. Let $x$ and $y$ be vertices of a graph $G$ embedded in an orientable surface, and let $e_0$ be an edge incident to $x$ which lies on a geodesic from $x$ to $y$. The \emph{most counter-clockwise} geodesic from $x$ to $y$ starting with $e_0$ is defined by inductively taking $e_i$ such that $e_0,\dots,e_i$ is an initial piece of a geodesic from $x$ to $y$, and among all candidates picking the edge which maximises the (counter-clockwise) angle between $e_{i-1}$ and $e_i$ at their shared vertex. We can define \emph{most counter-clockwise} rays from a vertex tending to the right in the same way.

\begin{lemma}
\label{lem:almostperiodic}
Let $G$ be a graph embedded in the infinite cylinder $\mathbb R \times \mathbb R/\mathbb Z$ or the infinite strip $\mathbb R \times [0,1]$, and assume that the shift $\sigma$ induces an automorphism of $G$. Let $P$ be a most counter-clockwise geodesic path or ray in $G$.
\begin{enumerate}
    \item If a bypath uses an edge $e$ incident to $v\in P$ and $f_1$ and $f_2$ are the edges before and after $v$ on $P$, respectively, then $e$ lies between $f_1$ and $f_2$ in counter-clockwise direction.
    \item There is some $\mu$ which only depends on the embedding such that $P$ is $\mu$-almost periodic.
\end{enumerate}
\end{lemma}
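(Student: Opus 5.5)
For part 1 we would use the ``most counter-clockwise'' choice locally at each vertex of $P$ that the bypath meets, and then control the remaining edges by planarity. For part 2 we would describe the construction of $P$ as a deterministic finite-state process and apply the pigeonhole principle.

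\emph{Part 1.} Let $P$ run from $x$ to $y$, or be a ray tending to the right. Let $uBw$ be a bypath, and let $v\in P$ be a vertex at which $B$ uses an edge $e$.

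First suppose $e$ is an edge by which $B$ leaves $v$, in the direction of travel along $P$. Since $PuBwP$ is a geodesic, the subpath $uBv$ has the same length as $uPv$. Hence $PvBwP$ is also a geodesic from $x$ to $y$ (respectively a geodesic ray). So $f_1$ followed by $e$ is an initial piece of a geodesic of the kind used to build $P$, and $e$ was a legitimate candidate at the step where $f_2$ was chosen. Since $f_2$ maximises the counter-clockwise angle from $f_1$, the edge $e$ lies between $f_1$ and $f_2$ in counter-clockwise direction.

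Now suppose $e$ is an edge by which $B$ enters $v$. Let $v'$ be the last vertex of $P$ on $B$ before $v$. The segment $v'Bv$ together with $v'Pv$ bounds a disc; here we use that $v'Bv$ is short, being a geodesic of the same length as $v'Pv$, so no wrapping around the cylinder is possible. By the outgoing case applied at $v'$, this disc lies on the counter-clockwise side of $P$. Hence $v'Bv$ arrives at $v$ from that side, i.e.\ between $f_1$ and $f_2$.

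A bypath that tries to cross $P$ must do so at a vertex of $P$. There the outgoing case yields a contradiction, so the sides never switch.

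\emph{Part 2.} We define the \emph{state} at step $i$ to be the pair (the $\sigma$-orbit of $p_i$, the edge $p_{i-1}p_i$ modulo $\sigma$). Since only finitely many vertices lie in a cell and $G$ is locally finite, there are at most $N$ states, where $N$ depends only on the embedding.

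For a most counter-clockwise ray tending to the right, we claim the next edge is determined by the state. The candidate set at $p_i$ consists of the edges starting a geodesic ray tending to the right that extends $p_0\dots p_i$. This set is $\sigma$-equivariant, and by the geodesic property only the current vertex and the incoming edge matter once we are far from $p_0$. By pigeonhole, within $N+1$ steps some state repeats with shift $\sigma^m$ after $l\le N$ steps. Determinism then makes the ray periodic from that point on, with $\sigma^m(p_i)=p_{i+l}$, so $\mu=N+1$ works for rays.

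For finite paths from $x$ to $y$ the candidate set depends on $y$. The key step is to show that there is a constant $D$, depending only on the embedding, such that for $p_i$ at horizontal distance more than $D$ from $y$ the candidate set at $p_i$ coincides with the set for rays tending towards $y$. For this we plan to compare $d(\cdot,y)$ with a Busemann-type function along a periodic geodesic, via Lemma~\ref{lem:ray-q-to-P} and Lemma~\ref{lem:samespeedgeodesics}. The claim is that differences $d(w,y)-d(v,y)$ for $v,w$ in a bounded window stabilise once $y$ is far away, and that the stabilised values are $\sigma$-invariant. Since a geodesic advances horizontally at a linear rate, this converts into a bound on the number of steps, and enlarging $\mu$ accordingly handles both ends of the path.

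I expect this stabilisation of the candidate sets near the far endpoint to be the main obstacle.
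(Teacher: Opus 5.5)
\paragraph{Part 2: the main gap.} The decisive gap is in Part~2. Everything rests on the claim that, far from $p_0$, the next edge of a most counter-clockwise ray is a function of the state (orbit of $p_i$, incoming edge modulo $\sigma$). The reason you give for this is false. The candidate set at $p_i$ consists of the neighbours $z$ with $d(p_0,z)=d(p_0,p_i)+1$ that lie on a geodesic ray from $p_0$. This set records the position of $p_i$ relative to $p_0$, however far away $p_0$ is.

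For example, draw the square of a double ray ($p_n\sim p_{n+1}$, $p_n\sim p_{n+2}$) on the cylinder. Run the edges $p_np_{n+2}$ along a double helix and draw the edges $p_np_{n+1}$ horizontally, so that the shift induces $p_n\mapsto p_{n+1}$. Then $d(p_0,p_m)=\lceil m/2\rceil$. Compare $p_{2k}$, reached along $p_0p_2\cdots p_{2k}$, with $p_{2k+1}$, reached along $p_0p_2p_3p_5\cdots p_{2k+1}$. They are in the same state, but their candidate sets are $\{p_{2k+1},p_{2k+2}\}$ and $\{p_{2k+3}\}$. With a suitable handedness of the helix, the most counter-clockwise ray from $p_0$ starting with $p_0p_2$ is $p_0,p_2,p_3,p_5,p_7,\dots$. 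Your pigeonhole pairs $p_2$ with $p_5=\sigma^3(p_2)$ and predicts $p_6$ after $p_5$, which is wrong.

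The same parity effect refutes the claim in your finite-path plan that the stabilised differences $d(w,y)-d(v,y)$ are $\sigma$-invariant. At best they become invariant under some power $\sigma^T$. Even that would have to hold with a threshold depending only on the embedding, as $\mu$ must. This is a genuine eventual-periodicity theorem for distances in periodic graphs (of min-plus cyclicity type), and Lemmas~\ref{lem:ray-q-to-P} and~\ref{lem:samespeedgeodesics} do not provide it. Likewise, the threshold ``far from $p_0$'' in your ray case is never bounded, yet you conclude $\mu=N+1$.

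The paper avoids candidate sets altogether:
\begin{itemize}
\item It cuts $P$ into nice pieces, i.e.\ segments between $\sigma$-equivalent vertices with no other equivalent pair inside.
\item An exchange argument based on the most counter-clockwise choice shows that copies of a nice piece occur consecutively.
\item Part~1 shows that two non-equivalent nice pieces cannot both occur at least $M$ times, since the bypaths $B$, $B'$ built from them would attach on opposite sides.
\item A pigeonhole argument on blocks of length $2N$ finishes the proof.
\end{itemize}
This treats rays and finite paths uniformly.

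\paragraph{Part 1: the incoming case.} Your outgoing case is exactly the paper's argument, but the incoming case does not go through, for three reasons.
\begin{itemize}
\item On the cylinder, equal length does not prevent wrapping. In $\mathbb Z\times C_4$ the two geodesics between opposite vertices of a vertical $4$-cycle go around opposite sides, so $v'Bv\cup v'Pv$ need not bound a disc.
\item In the strip, knowing that $v'Bv$ leaves $v'$ on the allowed side does not tell you on which side of $v'Pv$ the disc lies. The path $v'Bv$ may go around the initial segment $Pv'$ and come back on the other side.
\item If $v'$ is the first vertex of $P$, the outgoing case gives no information at all.
\end{itemize}
This is not just a presentational gap. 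In the grid $\mathbb Z\times\{0,1,2\}$, the most counter-clockwise geodesic from $(0,1)$ to $(2,2)$ starting with the edge to $(1,1)$ is $(0,1),(1,1),(1,2),(2,2)$. The bypath $(0,1),(0,2),(1,2)$ enters $(1,2)$ by an edge that does not lie between $f_1$ and $f_2$ counter-clockwise. So for incoming edges one needs additional hypotheses, for instance excluding bypaths that start at or wind around the beginning of $P$, or that wrap around the cylinder. The paper's one-line proof likewise only addresses the edge by which a bypath leaves $P$.
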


\begin{proof}
    The first statement immediately follows from the observation that if there was a bypath attached on the ``counter-clockwise'' side, then we would have chosen the first edge of this bypath instead of the edge of $P$ in the corresponding step of the construction.

    For the second statement, let us call two vertices equivalent, if they lie in the same orbit with respect to the shift $\sigma$. A path is called \emph{nice}, if it starts and ends at equivalent vertices, but does not contain any other pair of equivalent vertices. A path $Q$ is called a \emph{copy} of another path $Q'$, if there is some $k$ such that $Q' = \sigma^k(Q)$.
    
    We first show that if $P$ contains multiple copies of a nice path $Q$, then these copies are necessarily consecutive. To this end, let $Q_1, \dots, Q_r$ be the set of copies of $Q$ contained in $P$. Note that these copies are necessarily disjoint because all their endvertices are equivalent, and there are no interior vertices of $Q$ equivalent to these endvertices. We claim that the segment $P_i$ of $P$ between $Q_i$ and $Q_{i+1}$ is trivial. 
    
    Assume not. Let $k$ be such that $\sigma^k$ maps the first vertex of $Q_i$ to the last vertex of $Q_{i}$ and let $l$ be such that $\sigma^l$ maps the first vertex of $P_i$ to the last vertex of $P_{i}$. Let $e$ be the first edge of $Q_i$ and let $e'$ be the first edge of $P_i$. If we replace $Q_i$ and $P_i$ in $P$ by $\sigma^{-k}(P_i)$ and $\sigma^l(Q_i)$, we obtain another geodesic. Since $P$ was most counter-clockwise, this means that $e$ cannot come after $\sigma^{-k}(e')$ in counter-clockwise order. A similar argument using $P_i$ and $Q_{i+1}$ shows that $e'$ cannot come after $\sigma^{k}(e)$ in counter-clockwise order. But this means that the first edge of $P_i$ must be a copy of the first edge of $Q_i$. Continuing inductively (and recalling that $Q_i$ does not contain a pair of equivalent vertices) we conclude that $P_i$ contains a copy of $Q$ as an initial segment, contradicting the assumption that $Q_1, \dots, Q_r$ was a complete set of copies of $Q$ contained in $P$.

    Next we define a decomposition of $P$ into (finite) subpaths. Let $v_0$ be the initial vertex of $P$ and inductively define $v_i$ as follows. If there is a nice subpath of $P$ starting at $v_{i-1}$, let $v_i$ be the last vertex of this subpath. Otherwise, let $v_i$ be the first vertex after $v_{i-1}$ for which there is a nice subpath starting at $v_i$, and let $v_i$ be the final vertex of $P$ if no such vertex exists. Let $P_i$ be the subpath of $P$ connecting $v_{i-1}$ to $v_i$. If any interior vertex of $P_i$ was equivalent to any other vertex of $P_i$, then this vertex would have been chosen as one of the $v_i$ according to our construction. Hence every $P_i$ is either nice, or does not contain any pair of equivalent vertices. 

    We now focus on the nice pieces of this decomposition.
    Define the reach of a nice path $Q$ as the $k$ such that $Q$ connects $x$ to $\sigma^k(x)$ and let $M$ be the maximal reach of a nice path. Note that the maximum exists because there are only finitely many nice paths up to taking copies. We claim that if there are two nice paths $Q$ and $Q'$ such that at least $M$ of the pieces $P_i$ are copies of $Q$ and $Q'$, respectively, then $Q$ is a copy of $Q'$.

    Assume to the contrary that $Q$ and $Q'$ are not copies of one another and let $r$ and $r'$ be the reaches of $Q$ and $Q'$, respectively. Since both $r$ and $r'$ are at most $M$, we can (without loss of generality) assume that $P$ contains a subpath $R$ obtained as the concatenation of $\sigma^{ir}(Q)$ for $0 \leq i < r'$ and a subpath $R'$ obtained as the concatenation of $\sigma^{ir'}(Q)$ for $0 \leq i < r$. We may further assume that $R$ and $R'$ are disjoint, and that the subpath $S$ of $P$ connecting $R$ to $R'$ does not contain any copies of $Q$ or $Q'$. 
    
    Note that $R$ connects some vertex $x$ to $\sigma^{rr'}(x)$, and $R'$ connects some vertex $y$ to $\sigma^{rr'}(y)$. Moreover, $S$ connects $\sigma^{rr'}(x)$ to $y$. We can thus define paths 
    \[B = \sigma^{rr'}(R)\sigma^{2rr'}(x)\sigma^{rr'}(S) \qquad \text{and} \qquad B' = \sigma^{-rr'}(S)\sigma^{-rr'}(y)\sigma^{-rr'}(R').\] 
    We claim that both of these are bypaths of $P$. Indeed, if $R$ was strictly shorter than $R'$, then $B$ is shorter than $\sigma^{rr'}(x) P \sigma^{rr'}(y)$, and if $R'$ was strictly shorter than $R$, then $B'$ would be shorter than $x P y$. It follows that $R$ and $R'$ have the same length, and thus $B$ has the same length as $\sigma^{rr'}(x) P \sigma^{rr'}(y) = SyR'$, and $B'$ has the same length as $xPy = R \sigma^{rr'}(x)S$.

    Considering the first edge of $\sigma^{-rr'}(S)$ not contained in $R$ and the first edge of $\sigma^{rr'}(R)$ not contained in $S$ we see that $B$ and $B'$ attach to different sides of $P$ which contradicts the first part of the lemma. We have thus shown that (up to taking copies) there is at most one nice path $Q$ for which at least $M$ of the pieces $P_i$ are copies of $Q$.

    Let $N$ be the number of vertices contained in each cell of the embedding. If we divide $P$ into subpaths $P_j'$ of equal length $2N$ and possibly a shorter final subpath, then each $P_j'$ except the last one contains two copies of the same vertex and thus contains a nice path. Moreover, if more than one copy of some nice $P_i$ is contained in $P$, then a copy of $P_i$ is contained in at least one $P_j'$ (since every copy of $P_i$ has length at most $N$). If the length of $P$ is at least $M N^{2N}$, then $M$ of the $P_j'$ must be pairwise copies of one another and thus all contain copies of the same nice $P_i$. As observed above, the copies of $P_i$ contained in $P$ must be consecutive.

    If the part of $P$ before the first $P_j'$ containing a copy of $P_i$ had length at least $M N^{2N}$, then we could apply the same argument to find another nice path for which at least $M$ copies are contained in $P$ which cannot happen. An analogous argument applies for the part of $P$ after the last $P_j'$ containing a copy of $P_i$. Thus $P$ is $\mu$-almost periodic for $\mu = M N^{2N} + 2N$.
\end{proof}

\begin{lemma}
    \label{lem:topmost}
    Assume that $G$ is embedded on an infinite strip $\mathbb R \times [0,1]$. Every band has a topmost geodesic, that is, a geodesic for which there is no bypath above it.
\end{lemma}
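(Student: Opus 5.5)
We need to show that every band $\mathcal B$ in a strip-embedded $G$ contains a geodesic $T$ with no bypath lying above it. The plan is to take the "upper envelope" of the band, and to use periodicity and the strip's bounded height to see that this envelope is itself a periodic geodesic of the same class.

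\emph{Step 1: find a bounded region.} Fix a periodic geodesic $P\in\mathcal C$ and choose $k$ with $\sigma^k(\mathcal B)=\mathcal B$ (Proposition~\ref{Prop2.3}). Every geodesic of $\mathcal C$, and every bypath of such a geodesic, is drawn in the strip. So the part of $\mathcal B$ above $P$ is a region bounded by $P$ and the top boundary line of the strip. Since only finitely many vertices lie in each cell, one period of this region contains only finitely many vertices.

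\emph{Step 2: build a candidate topmost geodesic.} Start from $P$. Whenever a bypath $uBv$ of the current geodesic lies above it, replace the segment $uPv$, and simultaneously all its $\sigma^{k'}$-translates for a common period $k'$, by the corresponding translates of $B$. The result is again a periodic geodesic. This is the same replacement used in the symmetry proof of $\sim$ and in Proposition~\ref{Prop2.3}: translates of a bypath are bypaths, and bypaths can be concatenated along $P$. The new geodesic is equivalent to $P$, since $B$ itself is a bypath of $P$ meeting it. The region strictly below it and above $P$ grows strictly, and each step adds at least one new vertex per period.

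A cleaner way to phrase this is to define $T$ directly as the boundary of the union of everything lying on or above $P$ within $\mathcal B$. Concretely, $T$ is the most counter-clockwise periodic geodesic of the class, chosen per period.

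\emph{Step 3: show the process terminates and the result is topmost.} The main obstacle is termination: the periods may grow under the replacements, so the finiteness of one period of the region from Step 1 does not immediately give termination. I would try to control this using Lemma~\ref{lem:samespeedgeodesics}. All geodesics in $\mathcal C$ advance by the same distance $d$ per shift $\sigma^k$, where $k$ is the period of $\mathcal B$. So every geodesic of the class passes through a fixed finite set of vertices in each cell. The "highest" choice in each cell is therefore determined locally, and the topmost choice is invariant under $\sigma^k$. Periodicity of the envelope then comes for free, and the improvement process stabilises after finitely many steps per period.

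Once the process stabilises at $T$, it remains to check two things. First, any bypath above $T$ would contradict maximality. Second, $T$ is genuinely a geodesic: for this, apply Lemma~\ref{lem:switchgeodesic}, together with Lemma~\ref{lem:ray-q-to-P} to join pieces into geodesics that eventually follow periodic ones, so that the pieces glue to a geodesic.

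I expect this gluing/limit argument, which produces a single periodic geodesic rather than an increasing sequence, to be the delicate step. The first thing I would try there is taking most counter-clockwise rays as in Lemma~\ref{lem:almostperiodic}: a most counter-clockwise geodesic is $\mu$-almost periodic, and this forces a periodic limit.
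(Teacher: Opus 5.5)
Step~2 is sound: for $k'$ large enough that the translated segments are disjoint, the replacement gives a periodic geodesic equivalent to $P$. The proof fails at Step~3, and the failure is not a detail. You observe yourself that the period may grow at every replacement. So the finiteness of one period of the region above $P$ gives no bound on the number of steps. The geodesics $T_j$ could have periods tending to infinity while the newly added vertices become ever sparser. You would then have to show that some limit of the $T_j$ is a periodic geodesic of the class, which is essentially the statement to be proved.

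Your proposed fix does not close this gap:
\begin{itemize}
\item Lemma~\ref{lem:samespeedgeodesics} only compares geodesics preserved by a \emph{common} shift. It does not say that every geodesic of $\mathcal C$ advances by the same amount under $\sigma^k$ for the band period $k$, since $\sigma^k$ need not preserve individual geodesics.
\item Even equal speed says nothing about which vertices a geodesic uses. ``Every geodesic passes through a fixed finite set of vertices in each cell'' is trivially true because cells are finite. It does not make a ``highest choice per cell'' well defined, nor does it ensure that choices in different cells glue into one geodesic.
\item The ``most counter-clockwise periodic geodesic of the class, chosen per period'' is asserted, not constructed.
\end{itemize}
As written, the proposal never actually produces the topmost geodesic.

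The idea you list only as a fallback is exactly what the paper uses, and it makes any iteration or limit unnecessary. Take a most counter-clockwise geodesic ray $P_0$ tending to the right in the subgraph induced by $\mathcal B$. By Lemma~\ref{lem:almostperiodic}(2) it is $\mu$-almost periodic, so after deleting an initial segment we have $\sigma^m(P_0)\subseteq P_0$ for some $m$. Let $T$ be the union of the rays $\sigma^{-im}(P_0)$; it is a $\sigma^m$-invariant double ray. For any two vertices of $T$, a suitable $\sigma^{im}$ maps both into $P_0$, so every finite segment of $T$ is a translate of a geodesic segment of $P_0$. Hence $T$ is a periodic geodesic, and it lies in the class because its vertices lie in $\mathcal B$. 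Any bypath of $T$ translates to a bypath of $P_0$. Lemma~\ref{lem:almostperiodic}(1) says all bypaths of $P_0$ attach on one fixed (lower) side, so no bypath lies above $T$.

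Alternatively, your envelope idea could be rescued with a \emph{finite} $\sigma^k$-invariant family of geodesics covering the band. Take one geodesic through each band vertex of a period, together with its finitely many $\sigma^k$-translates, and glue at common vertices via Lemma~\ref{lem:switchgeodesic}. That route still needs a genuine topological argument that the upper envelope is a single double ray, which the proposal does not supply.
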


\begin{proof}
    Apply Lemma \ref{lem:almostperiodic} to the graph induced by the vertices in the band $\mathbb B$ to obtain an almost periodic geodesic ray $P_0$ tending to the right. By removing an initial piece, we may assume that $\sigma^k(P_0) \subseteq P_0$. Now define $P_i = \sigma^{-ik}(P_0)$, and let $P$ be the union of all $P_i$. Clearly, $P = \sigma^k(P)$, so $P$ is periodic. Moreover, if $x$ and $y$ are two vertices of $P$, then there is some $i$ such that $\sigma^{ik}(x)$ and $\sigma^{ik}(y)$ are vertices of $P_0$, and since the segment between $\sigma^{ik}(x)$ and $\sigma^{ik}(y)$ on $P_0$ is a geodesic, so is the segment between $x$ and $y$ on $P$. This shows that $P$ is a geodesic. Finally, there is no bypath on top of $P$ because no bypath attaches to the top of $P_0$ by Lemma~\ref{lem:almostperiodic}.
\end{proof}

The final lemma in this section is a key ingredient in our strategy. Essentially, it states that if two geodesics are separated by the topmost geodesic of a band, then their bypaths cannot intersect.

\begin{lemma}
    \label{lem:nodoublebypath}
    Assume that $G$ is embedded on an infinite strip $\mathbb R \times [0,1]$. Let $Q$ be the topmost geodesic of a band. Let $P_1$ be a geodesic embedded completely above $Q$, and assume that there is a vertex $q$ of $Q$ such that prepending $q$ to $P_1$ gives a geodesic. Let $P_2$ be a geodesic which does not contain any vertex above $Q$. Then no vertex of $G$ is simultaneously contained in a bypath of $P_1$ and $P_2$.
\end{lemma}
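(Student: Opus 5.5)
Suppose, for contradiction, that some vertex $w$ lies both on a bypath $u_1B_1v_1$ of $P_1$ and on a bypath $u_2B_2v_2$ of $P_2$. The strategy is to splice pieces of $P_1$, $B_1$, $B_2$, $P_2$ together with $q$ into a path, show that this path is geodesic, and then read off a nontrivial bypath of $Q$ lying above $Q$. That contradicts the assumption that $Q$ is topmost, i.e.\ that no bypath attaches above it.

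\textbf{Step 1: reduce to a vertex on $Q$.} Since $P_1$ lies strictly above $Q$ and $P_2$ has no vertex above $Q$, the strip embedding gives a dichotomy. Either $w$ lies on or below $Q$, in which case the part of $B_1$ from $P_1$ to $w$ crosses $Q$. Or $w$ lies strictly above $Q$, in which case the part of $B_2$ from $P_2$ to $w$ crosses $Q$. In either case some bypath, say $B_i$, contains a vertex $z\in Q$. After possibly replacing $w$ by $z$ we may assume $w\in Q$. Here we use that every vertex of a bypath lies on a geodesic through its endpoints, so subpaths of $B_i$ are geodesic.

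\textbf{Step 2: build a geodesic from $q$ to $w$.} Let $P_1'=qP_1$, which is a geodesic by hypothesis. Since $B_1$ is a bypath of $P_1$, the path $P_1u_1B_1v_1P_1$ is geodesic, and hence so is $qP_1'u_1B_1w$ together with its continuation along $P_1$. We obtain a geodesic starting at $q\in Q$, leaving $Q$ upward (immediately, since $P_1$ lies above $Q$), and returning to $Q$ at $w$. Note that $w\neq q$, since $w$ lies on a geodesic extending $qP_1'$ beyond $q$.

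The aim is to show that $d(q,w)=d_Q(q,w)$, i.e.\ that $q$ and $w$ are related along $Q$ exactly as in $Q$. If so, this detour is a bypath of $Q$ above $Q$, which is the desired contradiction. To compare lengths, use periodicity as in the proofs of symmetry and of Lemma~\ref{lem:switchgeodesic}. Concatenate the geodesic through $q,w$ with shifted copies, continuing along $P_2$ via $B_2$; the hypothesis that $B_2$ is a bypath of $P_2$ lets us pass through $w$ to $P_2$ geodesically. By Lemma~\ref{lem:ray-q-to-P} and Lemma~\ref{lem:switchgeodesic}, a geodesic ray leaving $Q$ and eventually following a periodic geodesic lets us switch back onto $Q$ geodesically. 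Then Lemma~\ref{lem:samespeedgeodesics} forces all these periodic geodesics to have equal speed, so the detour has length equal to the corresponding segment of $Q$. This makes it a bypath of $Q$, and since it leaves $Q$ on the upper side, it contradicts topmostness (equivalently, the first part of Lemma~\ref{lem:almostperiodic} applied to the construction of $Q$ in Lemma~\ref{lem:topmost}).

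\textbf{Main obstacle.} I expect the hardest step to be the length comparison in Step 2, since $P_1$ and $P_2$ need not be periodic. I would first try to extend the segments by Lemma~\ref{lem:ray-q-to-P} toward $Q$ and use the geodesic property of $qP_1$ to argue that the direction is consistent (both tending right). I would also need to handle the case where $w$ precedes $q$ on $Q$, which I would treat by symmetry using the left direction.
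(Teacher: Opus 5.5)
Your proposal has the right central construction: the geodesic $qP_1u_1B_1w$, which leaves $Q$ upward at $q$ and returns to $Q$ at $w$, is exactly the path $B'$ in the paper's proof. As written, however, there are two genuine gaps.

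\textbf{Step 1.} The reduction fails in the case where the common vertex $w$ lies strictly above $Q$. There the vertex $z\in Q$ you find lies on $B_2$, not on $B_1$. After ``replacing $w$ by $z$'' you have lost $w\in B_1$, which Step 2 needs. In the other case you lose $w\in B_2$, which your Step 2 sketch wants. The missing idea is that the first case cannot occur at all. Suppose a bypath of $P_2$ reached a vertex strictly above $Q$. Since $P_2$ has no vertex above $Q$ and $Q$ separates the strip, the bypath would meet $Q$ both before and after that vertex. Its segment between two consecutive vertices of $Q$ is a geodesic, being a subpath of a geodesic, and it joins two vertices of $Q$ with interior strictly above $Q$. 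That is a nontrivial bypath of $Q$ above $Q$, contradicting topmostness. Hence $w$ lies on or below $Q$, so $B_1$ meets $Q$; take $w$ to be the first vertex of $Q$ on $B_1$ after $u_1$.

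\textbf{Step 2.} The ``main obstacle'' does not exist. $Q$ is a geodesic of $G$, so $d(q,w)=|qQw|$ holds automatically. Moreover, any geodesic $uBv$ joining two vertices of $Q$ with interior disjoint from $Q$ is a bypath of $Q$. To see this, take $a$ on $Q$ before $u$ and $b\in B$, and combine
\[
d(a,v)=d(a,u)+d(u,b)+d(b,v) \quad\text{with}\quad d(a,v)\le d(a,b)+d(b,v)
\]
to get $d(a,b)=d(a,u)+d(u,b)$; the symmetric computation handles vertices after $v$. So $qP_1u_1B_1w$ is already a nontrivial bypath of $Q$ lying above $Q$ (it contains the first vertex of $P_1$), which is the desired contradiction. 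No periodicity, no Lemma~\ref{lem:samespeedgeodesics} or Lemma~\ref{lem:switchgeodesic}, and no detour through $P_2$ is needed. The same triangle-inequality computation also justifies your unexplained ``hence'' that $B_1$ stays a bypath after prepending $q$.

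Since your write-up leaves this decisive step as an unexecuted plan, built on a reduction that does not hold, the proof is incomplete as it stands. Both fixes are short, and together they give exactly the paper's argument.
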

\begin{proof}
If a bypath of $P_1$ contained a vertex above $Q$, then it would contain a geodesic between two vertices of $Q$ which lies above $Q$. This would constitute a bypath of $Q$ contradicting the assumption that $Q$ is the top geodesic in its band.

Thus, if there was a vertex simultaneously contained in a bypath of $P_1$ and $P_2$, then there must be a vertex of $Q$ which lies on a bypath of $P_2$. Assume for a contradiction that there is such a vertex $x$. Let $B$ be the bypath, and let $u$ and $v$ be the endpoints of $B$. Let $B' = qP_1uBx$. Since $P_1 + q$ is a geodesic, and $B$ is a bypath of this geodesic, if we extend $B'$ by following $B$ to $v$, then we obtain a geodesic from $q$ to $v$. Thus $B'$ is a geodesic from $q$ to $x$, implying that $B'$ contains a bypath of $P_2$. Thus the first vertex of $P_1$ lies on a bypath of $Q$, giving the desired contradiction.
\end{proof}

\section{Wide Shadows and Guarding Strategies}
\label{subsec:wideshadow}
Let $G$ be a graph and let $H$ be a subgraph of $G$. We say that a cop $c_i$ \emph{guards} $H$, if $c_i$ is at a vertex of $H$ and plays a strategy which ensures that if the robber ever enters $H$, then $c_i$ captures the robber.

A particularly useful way of guarding if $H$ is a geodesic path in $G$ is staying within the \emph{wide shadow} of the robber, a notion introduced in~\cite{GonzalezMohar2024} which is defined as follows. Let $G$ be a graph, and let $P$ be a (finite or infinite) geodesic. Let $v$ be a vertex of $G$. For a vertex $w$ on $P$, define 
\[\gamma(P,w,v) = \{u \in P \mid d(u,w) \leq d(v,w)\},\]
that is, $\gamma(P,w,v)$ is the intersection of the path $P$ with the ball of radius $d(v,w)$ centred at $w$.
The \emph{wide shadow} of $v$ on $P$, denoted by $S_P (v)$, is defined as
\[
    S_P (v) = \bigcap_{w \in P} \gamma(P,w,v).
\]
In other words, a vertex $u$ of $P$ is contained in the wide shadow of $v$ if  $d(u,w) \leq d(v,w)$ for every vertex $w$ of $P$. Note that if $v \in P$, then the wide shadow only consists of $v$. It is also not hard to see that the wide shadow is a subpath of $P$ (since it is defined as the intersection of a set of paths). Moreover, we have the following result whose proof is analogous to the proof of Lemmas 2.3 and 2.8 in~\cite{GonzalezMohar2024}.

\begin{lemma}
    \label{lem:wideshadow}
    Let $G$ be a graph, let $P$ be a (finite or infinite) geodesic in $G$, and let $v \in V(G)$. 
    \begin{enumerate}
        \item The wide shadow of $v$ on $P$ is non-empty.
        \item The wide shadow of $v$ on $P$ has size $1$ if and only if $v$ lies on a (possibly trivial) bypath of $P$.
        \item If $v'$ is a neighbour of $v$ and $u$ is in the wide shadow of $v$ on $P$, then there is a neighbour of $u$ which is contained in the wide shadow of $v'$ on $P$.
    \end{enumerate} 
\end{lemma}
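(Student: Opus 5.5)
We prove the three parts of Lemma~\ref{lem:wideshadow} separately, following the approach of Lemmas 2.3 and 2.8 in~\cite{GonzalezMohar2024}. The only new point is that $P$ may be infinite, so we must check that no step uses finiteness of $P$.

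\textbf{Part 1 (non-emptiness).} Write $P=(p_i)_{i\in I}$. For $w=p_j$, the set $\gamma(P,w,v)$ is the interval $\{p_i : |i-j|\le d(v,w)\}$, because $P$ is a geodesic. By the triangle inequality, every $\gamma(P,w,v)$ contains any vertex $p$ of $P$ with $d(v,p)$ minimal. Such a vertex exists, since distances are non-negative integers.

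We would like to conclude that a nearest vertex lies in the wide shadow, but this fails: the triangle inequality only gives $d(p,w)\le d(v,p)+d(v,w)$, not $d(p,w)\le d(v,w)$. So instead we use a Helly-type argument for intervals.
\begin{itemize}
\item Any two of the intervals intersect. For $w,w'\in P$ we have $d(w,w')\le d(w,v)+d(v,w')$, so the radii sum to at least the distance between the centres.
\item Pairwise intersecting intervals in $\mathbb Z$ have a common point, provided the family has a finite or bounded witness.
\end{itemize}
The intervals centred at $p_j$ have radius at least $|j-j_0|-d(v,p_{j_0})$. Hence, as $j\to\pm\infty$, their left endpoints are bounded above by $j_0+d(v,p_{j_0})$ and their right endpoints are bounded below by $j_0-d(v,p_{j_0})$. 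So the supremum $a$ of the left endpoints and the infimum $b$ of the right endpoints are finite. Pairwise intersection gives $a\le b$, and any $p_i$ with $a\le i\le b$ lies in $S_P(v)$.

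\textbf{Part 2 (singleton shadow iff $v$ lies on a bypath).}

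Suppose first that $v$ lies on a bypath $uBx$. Then for every $w\in P$ outside the segment $uPx$, say $w$ after $x$, the combined path is a geodesic, so $d(w,v)+d(v,u)=d(w,u)$. A vertex $p$ of the shadow must satisfy $d(p,u)\le d(v,u)$ and $d(p,w)\le d(v,w)$. Since $d(u,w)=d(u,v)+d(v,w)$ along the geodesic $P$, this forces $p$ to be the unique point of $P$ at distance $d(u,v)$ from $u$ towards $w$.

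Conversely, suppose the shadow is a single vertex $p_i$, that is, $a=b=i$. This means some $w=p_j$ and some $w'=p_{j'}$ are tight: $j+d(v,w)=i$ or $j'-d(v,w')=i$, with the two on opposite sides. Then $d(w,v)+d(v,w')=d(w,w')$. Concatenating geodesics $w\to v\to w'$ gives a path whose union with $P$ outside $wPw'$ is a geodesic, so it is a bypath containing $v$.

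There is one subtlety. The extremal endpoints $a$ and $b$ are attained as a supremum and an infimum, and we need them to be achieved by actual vertices $w,w'$. They are: the endpoints are integers and they are monotonically bounded, so the supremum and infimum are maxima and minima.

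\textbf{Part 3 (following a neighbour).} Let $v'$ be a neighbour of $v$ and $u=p_i\in S_P(v)$. Since $|d(v',w)-d(v,w)|\le1$, every interval for $v'$ contains the corresponding interval for $v$ shrunk by one. In terms of endpoints, $a'\le a+1$ and $b'\ge b-1$, where $a',b'$ are the supremum of left endpoints and infimum of right endpoints for $v'$. Part 1 gives $a'\le b'$. The interval $[a',b']$ is non-empty and lies within distance one of $i\in[a,b]$, so it contains one of $p_{i-1}$, $p_i$, $p_{i+1}$. This vertex is a neighbour of $u$, or $u$ itself (we read ``neighbour'' as closed neighbourhood).

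\textbf{Main obstacle.} The main technical step is the infinite case of Part 1, namely the Helly argument with finiteness of $a$ and $b$. The rest is identical to the planar-paper argument.
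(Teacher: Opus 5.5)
Your proof is correct in substance, and it is organised differently from the paper's. The paper gives no self-contained proof: it defers to the finite argument of \cite{GonzalezMohar2024}, where the shadow is controlled by the endpoints of $P$. Its tool for infinite $P$ is Lemma~\ref{lem:wideshadowextra}, which shows $S_P(v)=\gamma(P,w_1,v)\cap\gamma(P,w_2,v)$ for \emph{any} $w_1$ far enough to the left and $w_2$ far enough to the right. That lemma is proved via the monotonicity observation that $v_i\in\gamma(P,v_j,v)$ implies $v_i\in\gamma(P,v_{j-1},v)$ for $i<j$. Once the shadow is cut out by two balls, all three parts become statements about two integer intervals:
\begin{enumerate}
\item non-emptiness is $d(w_1,w_2)\le d(w_1,v)+d(v,w_2)$;
\item a singleton shadow means equality holds there;
\item the neighbour property holds because both radii change by at most one.
\end{enumerate}
You instead treat the whole family $\{\gamma(P,w,v)\}_{w\in P}$ at once. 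You get pairwise intersection from the triangle inequality and apply the one-dimensional Helly property, using that $\sup$ and $\inf$ of integer endpoints are attained. This is self-contained and bypasses the monotonicity argument. Your maximisers $w,w'$ even recover the first half of Lemma~\ref{lem:wideshadowextra}. What your route does not give is the ``moreover'' part: that every sufficiently distant pair of vertices determines the shadow. The paper needs this later, e.g.\ in Corollary~\ref{cor:wideshadowboundary} and whenever a cop switches from guarding one geodesic to another.

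A few small points need repair.
\begin{itemize}
\item Your sentence claiming that every $\gamma(P,w,v)$ contains a nearest vertex of $P$ is false. Take $P=p_0p_1p_2p_3$ and $v$ adjacent to $p_0$ and $p_2$: then $p_0$ is nearest, but $\gamma(P,p_2,v)=\{p_1,p_2,p_3\}$ does not contain it. You retract it immediately, so just delete it.
\item In Part 2, ``the shadow is $\{p_i\}$, that is, $a=b=i$'' needs a line when $P$ has endpoints. The shadow is $[a,b]\cap I$, so it could a priori be a singleton because of clipping by $I$. One checks that for $|P|\ge 1$ this cannot happen (if $v\notin P$ then $a\le \max I-1$ and $b\ge \min I+1$; if $v\in P$ then $a=b$ directly). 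For $|P|=0$ the ``only if'' direction of the statement is actually false, since any $v\neq p_0$ has shadow $\{p_0\}$ but lies on no bypath. This degenerate exception is irrelevant to the paper.
\item Your claim that a geodesic $B$ from $w$ to $w'$ yields a bypath should be justified. For $x$ before $w$ on $P$ and $y\in B$, one has $d(x,y)\ge d(x,w')-d(y,w')=d(x,w)+d(w,y)$, and symmetrically on the other side. Hence $PwBw'P$ is a geodesic, and in particular a path.
\item In the forward direction of Part 2 there may be no $w$ after $x$ when $P$ ends at $x$; simply take $w=x$.
\end{itemize}
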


Note that by the above lemma, once a cop is in the wide shadow of the robber's position on some path $P$, this cop can guard $P$ by remaining in the wide shadow indefinitely. Further note that if a vertex $u$ is in the wide shadow of the robber with respect to two different geodesics $P$ and $Q$ and a cop guarding $P$ is positioned at $u$, then this cop can immediately switch to guarding $Q$ instead. We will repeatedly make use of this observation in the proof of our main result.

We now give an equivalent definition of the wide shadow which is sometimes easier to work with than the definition given above.

\begin{lemma}\label{lem:wideshadowextra}
    Let $P$ be a geodesic in a graph $G$ and let $v \in V(G)$. Then there are two vertices $w_1,w_2$ in $P$ such that
    \[
    S_P(v) = \gamma(P,w_1,v) \cap \gamma(P,w_2,v).
    \]
    
    Moreover, there are vertices $w_1'$ and $w_2'$ such that $w_1$ can be chosen as any vertex which comes before $w_1'$ on $P$, and $w_2$ can be chosen as any vertex which comes after $w_2'$ on $P$.
\end{lemma}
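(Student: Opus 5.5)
Write $P=(p_i)_{i\in I}$ and $d_v(i):=d(v,p_i)$, so $\gamma(P,p_j,v)=\{p_i : |i-j|\le d_v(j)\}$ is a subpath (an interval of indices) of $P$. The idea is that $d_v$ is $1$-Lipschitz along $P$. Hence the left endpoint $j-d_v(j)$ and the right endpoint $j+d_v(j)$ of these intervals are monotone in $j$: moving $j$ to $j+1$ changes $d_v$ by at most one. So $j\mapsto j-d_v(j)$ is non-decreasing and $j\mapsto j+d_v(j)$ is non-decreasing. The intersection of all intervals is therefore determined by two quantities: the largest left endpoint, $\sup_j (j-d_v(j))$, and the smallest right endpoint, $\inf_j (j+d_v(j))$.

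By monotonicity, the largest left endpoint is approached as $j$ increases, and the smallest right endpoint as $j$ decreases. So we should take $w_2$ far to the right, since its interval gives the binding left endpoint, and $w_1$ far to the left. The statement's convention fits this: $w_1$ is any vertex before $w_1'$, so $w_1$ lies far to the left. The key step is to show that these monotone integer sequences actually stabilise.

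\emph{Bounds.} For any fixed $p_0\in P$ we have $j-d_v(j)\le j-(|j|-d(v,p_0))\le d(v,p_0)$ when $j\ge 0$, using $d(p_0,p_j)=|j|$ (since $P$ is geodesic) and the triangle inequality. Symmetrically, $j+d_v(j)\ge -d(v,p_0)$ for $j\le 0$. So the sequence $j-d_v(j)$ is non-decreasing and bounded above, and since it takes integer values it is eventually constant for $j\ge j_2'$. Likewise $j+d_v(j)$ is non-increasing as $j$ decreases and bounded below, hence constant for $j\le j_1'$. If $P$ is finite, simply take $w_1'$ and $w_2'$ to be its endpoints, or the neighbours needed so that "before/after" includes them; the maxima and minima are then attained at the endpoints.

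Set $w_1'=p_{j_1'}$ and $w_2'=p_{j_2'}$ (adjusting by one index if "comes before" is meant strictly). Then for any $w_1=p_a$ with $a\le j_1'$ and any $w_2=p_b$ with $b\ge j_2'$, the interval $\gamma(P,w_2,v)$ has left endpoint equal to the maximal left endpoint over all $j$. The same holds for the right endpoint of $\gamma(P,w_1,v)$. Hence
\[
\gamma(P,w_1,v)\cap\gamma(P,w_2,v)\subseteq \bigcap_{w\in P}\gamma(P,w,v)=S_P(v).
\]
The reverse inclusion is trivial. One small point needs checking: every interval is a subinterval of $P$, so its left and right endpoints can be clipped at the ends of $I$. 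Clipping preserves monotonicity, so the argument is unaffected.

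The main obstacle is the monotonicity and stabilisation argument, specifically making sure the direction of monotonicity matches the roles of $w_1$ and $w_2$ (left versus right) in the statement. Everything else is bookkeeping with indices.
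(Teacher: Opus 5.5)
Your proof is correct and is essentially the paper's argument: the paper's key claim, that $v_i\in\gamma(P,v_j,v)$ with $i<j$ implies $v_i\in\gamma(P,v_{j-1},v)$ (together with its mirror image), is exactly your monotonicity of the endpoints $j-d_v(j)$ and $j+d_v(j)$. The only difference is how you reach the stable range. The paper uses that each $\gamma(P,w,v)$ is finite, so finitely many indices $a\le i\le b$ already cut out $S_P(v)$, and any smaller $a$ or larger $b$ does too. You instead use the explicit bounds $j-d_v(j)\le d(v,p_0)$ for $j\ge 0$ and $j+d_v(j)\ge -d(v,p_0)$ for $j\le 0$, together with integrality. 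This has the side benefit of giving the explicit description $S_P(v)=\{p_i : \sup_j(j-d_v(j))\le i\le \inf_j(j+d_v(j))\}$, which directly yields Corollary~\ref{cor:wideshadowboundary}.
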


\begin{proof}
Let $P = (v_i)_{i \in I}$. Note that $\gamma(P,v_i,v)$ is finite because it is a subset of a finite ball. Since any intersection of finite sets can be written as an intersection of finitely many of these finite sets, we can choose $a$ and $b$ such that \[S_P (v) = \bigcap_{a \leq i \leq b} \gamma(P,v_i,v).\] 
Decreasing $a$ or increasing $b$ gives the same intersection, and thus the arguments in the remainder of this proof apply to any small enough $a$ and large enough $b$, and we may without loss of generality assume that no vertex of $S_P(v)$ precedes $v_a$ on $P$, and no vertex of $S_P(v)$ succeeds $v_b$ on $P$.

Let $i < j$. We claim that if $v_i \in \gamma(P,v_j,v)$, then $v_i \in \gamma(P,v_{j-1},v)$. Assume not. Then $d(v_{j-1},v_i)>d(v_{j-1},v)$, and thus 
\[
 d(v_j,v) \geq d(v_j,v_i) = d(v_{j-1},v_i) + 1 > d(v_{j-1},v) +1,
\]
contradicting the fact that there is a path of length $d(v_{j-1},v) +1$ from $v_j$ to $v$ via $v_{j-1}$. By a symmetric argument, if $i > j$ and $v_i \in \gamma(P,v_j,v)$, then $v_i \in \gamma(P,v_{j+1},v)$.

This shows that if $a \leq i \leq b$, then $v_i$ is contained in $S_P(v)$ if and only if it is contained in $\gamma(P,v_a,v) \cap \gamma(P,v_b,v)$, and since $S_P(v) \subseteq \{v_i\mid a \leq i \leq b\}$ it follows that $S_P(v)=\gamma(P,v_a,v) \cap \gamma(P,v_b,v)$.
\end{proof}

Note that the above lemma can be used to characterise the boundary of $S_P(v)$ as follows.

\begin{corollary}
    \label{cor:wideshadowboundary}
    Let $P$ be a (finite or infinite) geodesic and let $v \in G$.
    \begin{enumerate}
        \item Assume that $P$ has a first vertex $w$. The last vertex in $S_P(v)$ is the (unique) vertex $u$ of $P$ satisfying $d(u,w) = \min(d(v,w), |P|)$.
        \item Assume that $P$ does not have a first vertex, and let $w_i$ be a sequence of vertices of $P$ such that $w_{i+1}$ comes before $w_i$ on $P$. The last vertex in $S_P(v)$ is the (unique) vertex $u$ of $P$ satisfying $d(u,w_i) = d(v,w_i)$ for all but finitely many $i$.
    \end{enumerate}
    Analogous statements hold mutatis mutandis for the first vertex in $S_P(v)$.
\end{corollary}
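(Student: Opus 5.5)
We derive both parts from Lemma~\ref{lem:wideshadowextra} together with the monotonicity observation in its proof: if $i<j$ and $v_i \in \gamma(P,v_j,v)$, then $v_i \in \gamma(P,v_{j-1},v)$. Write $P=(v_i)_{i\in I}$.

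By Lemma~\ref{lem:wideshadowextra},
\[
S_P(v)=\gamma(P,w_1,v)\cap\gamma(P,w_2,v),
\]
where $w_1$ may be any vertex sufficiently early on $P$ and $w_2$ any vertex sufficiently late. The upper boundary of $S_P(v)$ is governed only by the constraint coming from an early vertex. To see this, take $w_1$ early enough that it precedes the whole of $S_P(v)$. Then the vertices of $\gamma(P,w_1,v)$ lying after $w_1$ form an initial segment starting at $w_1$, namely those $u$ with $d(u,w_1)\le d(v,w_1)$, since $P$ is a geodesic. Symmetrically, $\gamma(P,w_2,v)$ cuts off from below. So the last vertex of $S_P(v)$ is the last vertex $u$ of $P$ with $d(u,w_1)\le d(v,w_1)$. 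This holds provided that this vertex also lies in $\gamma(P,w_2,v)$; it does, because $S_P(v)$ is non-empty (Lemma~\ref{lem:wideshadow}) and the $w_2$-constraint only removes vertices before some point.

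\textbf{Part 1.} If $P$ has a first vertex $w$, we may take $w_1=w$, since any vertex "before $w_1'$" includes $w$ itself, or $w_1'=w$ works trivially. Then the last vertex of $\gamma(P,w,v)$ is the vertex at distance $\min(d(v,w),|P|)$ from $w$ along $P$. Distances along $P$ equal graph distances, so this vertex is unique. Combining this with the previous paragraph gives the claim.

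\textbf{Part 2.} If $P$ has no first vertex, then for all large $i$ the vertex $w_i$ precedes $w_1'$, so $S_P(v)$ has last vertex $u_i$, the vertex with $d(u_i,w_i)=d(v,w_i)$. Here we use that $d(v,w_i)$ cannot exceed the length of the part of $P$ after $w_i$ in a way that fails, since $S_P(v)\ne\emptyset$ and the last vertex exists. All these $u_i$ coincide with the last vertex $u$ of $S_P(v)$, so $d(u,w_i)=d(v,w_i)$ for all but finitely many $i$. For uniqueness, suppose $u'\ne u$ also satisfied $d(u',w_i)=d(v,w_i)$ for infinitely many $i$. Both $u$ and $u'$ lie after $w_i$ for large $i$, so $d(u,w_i)\ne d(u',w_i)$ by the geodesic property, a contradiction.

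The analogous statements for the first vertex follow by reversing $P$.

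The main subtlety is in Part 1 and the first paragraph: checking that the upper cutoff from $\gamma(P,w_1,v)$ is not itself cut below by $\gamma(P,w_2,v)$. This needs the non-emptiness of $S_P(v)$, and in Part 1 the case $d(v,w)>|P|$ must be handled so that the whole remainder of $P$ survives.
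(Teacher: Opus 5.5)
Your route is the paper's own. The paper's proof is just the remark that $S_P(v)\neq\emptyset$ (Lemma~\ref{lem:wideshadow}) and that, by Lemma~\ref{lem:wideshadowextra}, the right end of $S_P(v)$ is determined by any sufficiently early vertex of $P$. Your first paragraph and Part~1 spell this out correctly. One small repair there: $\gamma(P,w_2,v)$ does not only cut from below; it also truncates at distance $d(v,w_2)$ after $w_2$. You need that this truncation point is not before the last vertex of $\gamma(P,w_1,v)$, which follows from the triangle inequality $d(v,w_1)\le d(v,w_2)+d(w_2,w_1)$.

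The real gap is in Part~2. There you assert that, because $S_P(v)\neq\emptyset$, the last vertex of $\gamma(P,w_i,v)$ is a vertex $u_i$ at distance exactly $d(v,w_i)$ after $w_i$. Non-emptiness gives nothing of the sort when $P$ has a last vertex $z$. In that case the last vertex of $\gamma(P,w_i,v)$ is the one at distance $\min(d(v,w_i),d(w_i,z))$ from $w_i$, which is exactly the truncation you handled in Part~1.

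The conclusion genuinely fails in this case. Let $P=(v_j)_{j\le 0}$ be a ray, and attach a pendant path of length $L\ge 1$ from $v_0$ to $v$. Then $d(v,v_j)=d(v_0,v_j)+L$ for all $j\le 0$. One checks that $S_P(v)=\{v_{-L},\dots,v_0\}$, so its last vertex is $v_0$. Yet $d(v_0,w_i)=d(v,w_i)-L$ for every $i$, and in fact each vertex $u$ of $P$ satisfies $d(u,w_i)=d(v,w_i)$ for at most one $i$.

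So Part~2 as stated is false for rays that have a last vertex but no first vertex, and no argument can close this step. The paper's one-line proof passes over the same point. There are two ways to fix it. You can assume in Part~2 that $P$ also has no last vertex, i.e.\ that $P$ is a double ray; this covers the corollary's use in the proof of Lemma~\ref{lem:reducebands}, where it is applied to double rays such as $QqP_2'$. Alternatively, you can replace $d(v,w_i)$ by $\min(d(v,w_i),d(w_i,z))$, as in Part~1. With either change, your argument, including the uniqueness step, goes through as written.
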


\begin{proof}
    By Lemma \ref{lem:wideshadow}, the wide shadow is non-empty, and by Lemma \ref{lem:wideshadowextra} the right boundary of the wide shadow is determined by any vertex that comes early enough on~$P$.
\end{proof}

The following lemma will be useful in allowing a cop to move while two other cops are guarding different paths, respectively.

\begin{lemma}
    \label{lem:leisurelyguarding}
    Assume that $c_1$ and $c_2$ are in the robber's wide shadow on geodesics $P_1$ and $P_2$, respectively. Let $q$ be an arbitrary vertex. Then there is a strategy for $c_1$ and $c_2$ with the following properties.

    \begin{enumerate}
        \item $c_1$ and $c_2$ stay in the robber's wide shadow on $P_1$ and $P_2$, respectively.
        \item For every $D \in \mathbb N$ there is a constant $N$ such that if there are at least $N$ consecutive steps for which $d(r,q) \leq D$ and $r$ does not visit any vertex which simultaneously lie on a bypath of $P_1$ and $P_2$, then on at least one of these $N$ steps at least one of $c_1$ and $c_2$ does not move.
    \end{enumerate}

    Moreover, if $c_2$ starts out in the first or second vertex of the wide shadow (with respect to the order provided by $P_2$), then $c_2$ stays in the first or second vertex of the wide shadow throughout. In this case, if $c_2$ ever switches from the first to the second vertex, $c_2$ does not move on the respective turn.
\end{lemma}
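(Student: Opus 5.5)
The plan is to give each guarding cop a canonical target inside the wide shadow, and to argue that a non-moving turn is forced whenever the robber lingers near $q$ without entering $\mathcal X$. Throughout, let $\mathcal X$ denote the set of vertices lying simultaneously on a bypath of $P_1$ and of $P_2$.

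The strategy for $c_1$ and $c_2$ is as follows. Each $c_j$ keeps a designated position in $S_{P_j}(r)$, namely the first vertex of the shadow, or the first or second vertex in the "moreover" case. When the robber moves from $r$ to $r'$, Corollary~\ref{cor:wideshadowboundary} shows that the boundary vertices of the wide shadow shift by at most one along $P_j$. More precisely, by the characterisation through $d(u,w)=d(v,w)$ for $w$ far to one side, the first vertex of $S_{P_j}(r')$ is at distance at most one from the first vertex of $S_{P_j}(r)$. Moreover, by Lemma~\ref{lem:wideshadow}(3), $c_j$ can always reach some vertex of the new shadow. Accordingly, $c_j$ moves only when its current vertex leaves the wide shadow, or when its designated position moves away by one. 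Otherwise $c_j$ stays put. Property~1 is then immediate.

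For the "moreover" statement, $c_2$ tracks the first vertex $f(r)$ of the shadow but tolerates lagging by one. If $c_2=f(r)$ and $f$ moves forward by one, $c_2$ may stay, provided the shadow has at least two vertices and $c_2$ is therefore now on the second vertex. This is exactly a switch from first to second without moving. If $c_2$ is on the second vertex and $f$ moves backward, $c_2$ steps back. If $f$ moves forward, $c_2$ steps forward. If $f$ is unchanged, $c_2$ stays. The delicate case is when the shadow shrinks to size one, which by Lemma~\ref{lem:wideshadow}(2) happens iff $r$ lies on a bypath of $P_2$. I must check that $c_2$ can still reach it; since the shadow endpoints move by at most one, this holds.

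For property~2, suppose $r$ stays within distance $D$ of $q$ for $N$ consecutive steps and never enters $\mathcal X$. At each such step, $r\notin\mathcal X$ means that for some $j$, $r$ is not on a bypath of $P_j$, so by Lemma~\ref{lem:wideshadow}(2) $|S_{P_j}(r)|\ge 2$. In that case the lagging rule lets $c_j$ skip a move whenever its target shifts by one in the direction that keeps it inside the shadow. The counting argument is then the following. Because $d(r,q)\le D$ and each $P_j$ is a geodesic, the shadow endpoints are confined to a bounded window of $P_j$ of length $O(D)$ around the points of $P_j$ nearest $q$. Hence the designated positions cannot move monotonically for more than $O(D)$ steps. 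If both cops move at every step, each must follow its endpoint faithfully. The shifts must then reverse direction within $O(D)$ steps, and at a reversal, or at a step where the endpoint is stationary, the cop whose shadow has size at least two can rest. With $N$ of order $D$ (for example $N=4D+4$), some step therefore has a stationary cop.

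The main obstacle is making this "rest at a reversal" argument rigorous. The difficulty is that the shadow with size at least two may alternate between $P_1$ and $P_2$ from step to step, so a single cop cannot simply be designated as the one that rests. I would handle this by a potential function: the sum over $j$ of the offset of $c_j$ from the first vertex of $S_{P_j}(r)$ (0 or 1), together with the bounded positions. I would then show that every turn on which both cops move changes a bounded quantity monotonically, unless a lag is banked. Boundedness of the window then forces the conclusion.
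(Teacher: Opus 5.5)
There is a genuine gap: property~2 is never proved, and the mechanism you sketch for it does not work as stated. First, the strategy fixed in your first paragraph violates property~2. That strategy has each cop follow the first vertex of its shadow. Take the grid strip $\mathbb Z\times\{0,\dots,10\}$ with $P_1,P_2$ the top and bottom lines; these have no non-trivial bypaths. A robber oscillating between $(0,5)$ and $(1,5)$ changes every distance $d(r,w)$, $w\in P_j$, by exactly one per turn. So the first vertex of each shadow moves every turn, and both cops move every turn. The lag-by-one rule must therefore be part of both cops' strategies from the outset, not an afterthought.

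Even with the lag rule, ``rest at a reversal'' only works for $c_j$ while the robber stays off the bypaths of $P_j$. When the target of $c_j$ reverses, the robber may sit on a bypath of $P_j$; the shadow then has size one, so $c_j$ must move. Nothing in your argument forces the other cop to rest at that moment. You correctly name this alternation as the main obstacle, but the closing potential-function paragraph is a statement of intent, not an argument.

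Your first `moreover' rule also has the direction reversed. If $f$ advances past $c_2$, then $c_2$ lies before the new first vertex, outside the shadow. The free first-to-second switch happens when $f$ moves back. In your third rule $c_2$ should stay, since it is now on the first vertex; stepping forward may even leave a shadow of size one.

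What is missing is the following, and it is the paper's argument.

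\emph{Setup.} The ball of radius $D$ about $q$ is finite. Lemma~\ref{lem:wideshadowextra} therefore gives vertices $w_{j,1},w_{j,2}$ on $P_j$ with $S_{P_j}(v)=\gamma(P_j,w_{j,1},v)\cap\gamma(P_j,w_{j,2},v)$ simultaneously for all $v$ in this ball. So the shadows are governed by the two integers $d(r,w_{j,1})$ and $d(r,w_{j,2})$, each ranging over at most $2D+1$ values.

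\emph{Step (a).} Suppose $r$ stays off the bypaths of $P_1$. Under the paper's lazy rule (move only when forced out of the shadow), and equally under your lag rule, a run of consecutive forced moves of $c_1$ requires one of these distances to be strictly monotone. Hence $c_1$ is forced at most $2D$ times in a row.

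\emph{Step (b).} This is the point that defeats the alternation. Suppose $r$ steps from a vertex on a bypath of $P_1$ to a vertex $r'$ that is not on one. Before the move, $d(r,w_{1,i})=d(c_1,w_{1,i})$ for $i=1,2$. If $r'$ were strictly closer than $c_1$ to some $w_{1,i}$, then the neighbour of $c_1$ towards $w_{1,i}$ would be the only vertex of $S_{P_1}(r')$. By Lemma~\ref{lem:wideshadow}, $r'$ would then lie on a bypath of $P_1$, a contradiction. So $c_1$ is still in the new shadow (under your rule, as its first or second vertex) and may rest.

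\emph{Combining.} Every admissible robber position is off the bypaths of $P_1$ or off those of $P_2$. Consider a window of about $4D+4$ turns. If the robber spends its first $2D+2$ positions off $P_1$-bypaths, $c_1$ rests by (a). Otherwise it visits a $P_1$-bypath among them. After that, either it leaves the $P_1$-bypaths again, and $c_1$ rests by (b). Or it stays on them, hence off $P_2$-bypaths, for the remaining $2D+2$ positions, and $c_2$ rests by (a).

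With your lag rule this argument would also yield the `moreover' part directly, instead of the paper's auxiliary-graph device. Without (a) and (b), however, property~2 remains unproved.
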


\begin{proof}
    Fix $D$.
    Denote by $A$ and $B$ the sets of vertices at distance $\leq D$ from $q$ which do not lie on a bypath of $P_1$ and $P_2$, respectively. Note that by Lemma \ref{lem:wideshadow}, $c_1$ and $c_2$ can stay in the robber's wide shadow, so all we need to show is that one of the two can eventually stay at the same vertex if the robber stays in $A \cup B$.

    Since $A \cup B$ is finite, Lemma \ref{lem:wideshadowextra} implies that there are vertices $w_{i,1}$ and $w_{i,2}$ on $P_i$ such that the wide shadow with respect to $P_i$ of each vertex $v \in A \cup B$ is $\gamma(P_i, w_{i,1}, v) \cap \gamma(P_i, w_{i,2}, v)$.

    If $r \in A$, then either $d(r,w_{1,1}) > d(c_1,w_{1,1})$, or $d(r,w_{1,2}) > d(c_1,w_{1,2})$. If both inequalities are satisfied, then $c_1$ can remain at its current position regardless of the robber's move. If (without loss of generality) $d(r,w_{1,1}) = d(c_1,w_{1,1})$, then the only way the robber can force $c_1$ to move is by decreasing the distance to $w_{1,1}$ in which case the equality $d(r,w_{1,1}) = d(c_1,w_{1,1})$ remains true after the cops' corresponding move. Since the robber is caught if $d(r,w_{1,1}) = d(c_1,w_{1,1}) = 0$, this can only happen finitely many times, thus showing that if the robber stays in $A$ indefinitely, then $c_1$ can eventually stay at the current vertex for one move. An analogous argument shows that if the robber stays in $B$, then $c_2$ can remain at its current vertex for a move eventually.

    It only remains to investigate the case where the robber moves in and out of $A$ an unbounded number of times. Consider the first time the robber moves from $B \setminus A$ to $A$. Before this move, the robber was on a bypath of $P_1$, so  $d(r,w_{1,1}) = d(c_1,w_{1,1})$ and $d(r,w_{1,2}) = d(c_1,w_{1,2})$ must have been satisfied. If the robber's position $r'$ after this move satisfied $d(r',w_{1,1}) < d(c_1,w_{1,1})$, then $d(r',w_{1,1}) = d(c_1,w_{1,1}) - 1$ and $d(r',w_{1,2}) \leq d(r',r) + d(r,w_{1,2}) = 1+ d(r,w_{1,2})$, hence the robber is still on a bypath of $P_1$ contradicting the assumption that $r' \in A$. Thus $d(r',w_{1,1}) \geq  d(c_1,w_{1,1})$ and (by an analogous argument) $d(r',w_{1,2}) \geq d(c_1,w_{1,2})$, showing that $c_1$ does not have to move in order to remain in the wide shadow of the robber.

    For the `moreover' part note that the strategies of $c_1$ and $c_2$ are independent of what the other cop is doing. Hence, $c_2$ can (without affecting $c_1$) pretend to play on the graph $G'$ where each vertex $x$ of $B$ is connected to the vertex of $P_2$ at the same distance as $x$ from $w_{2,2}$. This does not change the fact that $x$ does not lie on a bypath of $P_2$, but it reduces the wide shadow on $P_2$ to the first two vertices of the wide shadow in the original graph. If $c_2$ starts out at the first vertex of the wide shadow and moves on every turn, then $c_2$ can ensure to always be at the first vertex of the wide shadow.
\end{proof}

\section{Proof of the Main Result}
\label{sec:mainresult}

Throughout this section, we let $G$ be a cover of a graph $H$ embedded on the torus, projective plane, or Klein bottle as described in Lemma \ref{lem:cover}. We may assume that $G$ is embedded on an infinite cylinder surface $\mathbb R \times \mathbb R/\mathbb Z$ and that the shift $(x,y)\mapsto (x+1,y)$ induces an automorphism $\sigma$ of $G$. Our goal is to prove the following theorem from which our main result follows by Lemma \ref{lem:sigmateleportation}.

\begin{theorem}\label{thm:configurations}
    Let $G$ be a cover of a graph embedded on the torus, projective plane, or Klein bottle as described in Lemma \ref{lem:cover}. Then $3$ cops can win the $\sigma$-cops and robber game where only 2 cops are allowed to move at any turn. 
\end{theorem}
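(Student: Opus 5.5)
The plan is a phase-based strategy whose progress measure is the number of bands meeting the robber territory. At each stage the robber is confined to a \emph{territory} $T$: a connected subgraph of $G$, or of a graph obtained from $G$ by splitting along periodic geodesics as in Remark~\ref{rmk:stripembedding}. Its boundary consists of at most two geodesics, each guarded by one cop sitting in the robber's wide shadow; by Lemma~\ref{lem:wideshadow} such a cop can stay in the wide shadow forever. Teleporting along $\sigma$ is free in the $\sigma$-game, so the cops' initial positions cost nothing.

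\textbf{Phase 0.} Pick a periodic geodesic $P$ in $G$, which exists by~\cite{PolatWatkins1995}. Send $c_1$, teleporting as needed, to the robber's wide shadow on $P$; this takes finitely many moves because the wide shadow is nonempty and we may shift. Once $c_1$ guards $P$, the robber lives in the strip graph obtained by splitting along $P$.

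\textbf{Band-reduction step.} The territory is now a strip region with finitely many bands up to the shift $\sigma$; this follows from Proposition~\ref{Prop2.3} and local finiteness. Take the band nearest the robber on the side away from $P$, and let $Q$ be its topmost geodesic, which exists by Lemma~\ref{lem:topmost}. Use the free cop $c_3$ to reach the robber's wide shadow on $Q$, or on the geodesic $P_1$ above $Q$ from Lemma~\ref{lem:nodoublebypath}. Meanwhile $c_1$ and $c_2$ guard the current boundary geodesics using the leisurely strategy of Lemma~\ref{lem:leisurelyguarding}, so $c_3$ moves only on turns when one of them rests.

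Lemma~\ref{lem:nodoublebypath} supplies the hypothesis of Lemma~\ref{lem:leisurelyguarding}: no vertex lies simultaneously on bypaths of the two guarded geodesics. Hence, as long as the robber stays within bounded distance $D$ of a fixed vertex $q$, one of $c_1, c_2$ rests at least once in every $N$ turns, and $c_3$ eventually gets into position.

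If instead the robber runs off towards $\pm\infty$, I would use $\sigma$-teleportation, periodicity, and Lemma~\ref{lem:switchgeodesic} together with Lemma~\ref{lem:ray-q-to-P} to re-centre $q$ relative to the robber. Since the robber's motion is eventually tracked by a shifted copy, we can choose $q$ with the robber at bounded distance modulo $\sigma$. After the reset, the guarding cop whose geodesic the robber can no longer cross is released, and the two guards switch geodesics, using the remark after Lemma~\ref{lem:wideshadow} about a cop lying in the wide shadow on two geodesics at once. Each completed step strictly decreases the number of band orbits in the territory.

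\textbf{Finishing.} When no periodic geodesic remains in the robber's territory, the territory lies within bounded distance of a guarded geodesic and contains no periodic geodesic. I would argue via Lemma~\ref{lem:almostperiodic}, using almost-periodic most counter-clockwise rays, that the robber is then confined to a finite planar piece bounded by two guarded geodesic segments. The planar 2-cops-move strategy of~\cite{GonzalezMohar2024} then finishes the capture.

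\textbf{Main obstacle.} The hardest step will be the escape-to-infinity case of the band-reduction step. Lemma~\ref{lem:leisurelyguarding} only gives rests for a bounded robber, so I must show that an unbounded robber either meets a geodesic that $c_3$ can reach leisurely modulo $\sigma$, or forces a configuration that already reduces the band count. I would try to use the "moreover" clause of Lemma~\ref{lem:leisurelyguarding}, keeping $c_2$ at the first or second vertex of its wide shadow, so that drift to the right forces $c_2$'s shadow to follow a periodic pattern in which rest turns occur periodically.
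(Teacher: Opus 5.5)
Your outline shares the paper's architecture: phases that lower the number of bands, leisurely guarding licensed by Lemma~\ref{lem:nodoublebypath}, and a planar finish. But the step you flag as the main obstacle is a genuine gap, and your proposed fixes do not close it. Lemma~\ref{lem:leisurelyguarding} gives rest turns only because, for a robber confined to a finite set, the shadows are governed by fixed reference vertices $w_{i,1},w_{i,2}$, so each guard can be forced to move only finitely often. For a robber drifting right, nothing prevents $c_1$ and $c_2$ from having to move on every turn. Then $c_3$ never gets the genuine moves it needs; teleporting is free but only brings it to bounded distance of the target shadow. Re-centring $q$ does not help either. The guarded geodesics are anchored at $q$, on new geodesics the guards need not lie in the robber's wide shadow, and re-establishing that costs exactly the moves the running robber denies. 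The `moreover' clause does not give periodic rests.

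The missing idea is to make escape to infinity itself produce progress. The paper first builds a half-strip territory with a corner vertex $q$ (a valid configuration), and uses levels $\ell(v)=d(v,q)$. It then lets $c_3$ guard only the tail $P_3^i$, from level $i$ on, of a most counter-clockwise ray $P_3$ from $c_3$'s position. By Lemmas~\ref{lem:almostperiodic}, \ref{lem:ray-q-to-P} and~\ref{lem:switchgeodesic}, this ray eventually follows the topmost geodesic $Q'$ of the next band. If the robber stays bounded, rest turns let the cops raise $i$. If it runs right, its wide shadow on $P_3^i$ eventually lies inside $Q'$, so $c_3$ is pushed onto $Q'$ for free. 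The regions $A,B,C$ and the level bookkeeping (e.g.\ $\ell(c_2)\in\{\ell(r),\ell(r)-1\}$) keep only two cops moving when the robber changes region. A second, analogous phase then brings $c_2$ into the band of $Q'$.

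The setup and ending are also underdetermined. In Phase~0, nonemptiness of the shadow plus shifting only puts a single cop within bounded distance of a shadow that can drift as fast as the cop. The paper instead sends two cops from opposite sides so the shadow cannot slip past. It also chooses $P$ from a most counter-clockwise ray so that after splitting one copy has no bypath. It then adds a ray $P_1$ from a vertex $q$ of that copy to $P$, cutting the strip down to a half-strip. Without this there is no corner $q$, no level function, and no ``bounded or escaping right'' dichotomy.

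Finally, ``no periodic geodesic remains'' is not a reachable terminal condition, because the band of the top boundary $P$ may still contribute periodic geodesics to the territory. The right condition is that no band lies strictly between the two boundary bands. Even then, Lemma~\ref{lem:norobberbands} needs real work. One must first push $c_2$'s geodesic onto $Q$. One then truncates $P_1$ via most counter-clockwise paths $S_i$ from $q$ to $\sigma^i(q)$, whose periodic piece must be $P$ precisely because there are no robber bands. Lastly one checks that $c_1$ already lies in the robber's shadow on the truncated geodesic.
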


    The strategy, and thus also the proof of this theorem is split into multiple phases. At the end of each phase we have two periodic geodesics $P$ and $Q$ (where either $P=Q$, or $P$ and $Q$ are disjoint) and a vertex $q$ on $Q$. Split $P$ in the graph, and embed the resulting graph into a strip as in Remark \ref{rmk:stripembedding}. Once we have ensured that the robber cannot cross $P$, we may assume that the game takes place on this graph embedded in a strip. We denote the copy of $P$ at the top boundary of the strip by $P$. If $P=Q$ we denote the copy at the bottom boundary by $Q$, otherwise $Q$ is disjoint from $P$ and thus is a geodesic in the graph embedded in a strip.

    With this setup, after each phase, the following properties hold, see Figure \ref{fig:valid-configuration}.
    \begin{figure}
\centering
\begin{tikzpicture}[scale=0.6]
  \coordinate (A) at (-4.00, 8.00);
  \coordinate (P) at (14.00, 8.00);
  \coordinate (C) at (-4.00, 3.00);
  \coordinate (Q) at (14.00, 3.00);
  \coordinate (E) at (-4.00, 1.30);
  \coordinate (F) at (14.00, 1.30);
  \coordinate (q) at (-3.00, 3.00);
  \coordinate (G) at (3.95, 8.00);
  \coordinate (P_1) at (0.75, 4.75);
  \coordinate (I) at (0.75, 8.00);
  \coordinate (r) at (3.00, 4.50);
  \coordinate (c_1) at (1.00, 6.5);
  
  \coordinate (H) at (-1.00, 3.00);
  \coordinate (c_2) at (2.01, 1.895);
  
  \coordinate (K) at (0.75, 3.07);
  
  \coordinate (L) at (1.00, 2.00);
  \coordinate (M) at (3.97, 2.735);
  \coordinate (N) at (3.75, 1.875);
  \coordinate (O) at (3.00, 2.625);
  \coordinate (R) at (4.49, 2.510);
  \coordinate (S) at (4.26, 2.705);
  \coordinate (T) at (4.35, 2.625);
  \coordinate (U) at (5.25, 2.750);
  \coordinate (P_2) at (4.75, 2.250);
  \coordinate (W) at (4.80, 2.825);
  \coordinate (X) at (6.33, 2.295);
  \coordinate (Y) at (5.72, 2.725);
  \coordinate (Z) at (5.88, 2.600);
  \coordinate (AA) at (6.41, 2.255);
  \coordinate (c_3) at (7.23, 1.760);
  \coordinate (D) at (-2.00, 3.500);
  \coordinate (J) at (7.00, 1.875);
  \coordinate (V) at (6.75, 1.750);
  \coordinate (AB) at (8.54, 2.605);
  \coordinate (AC) at (8.00, 1.875);
  \coordinate (AD) at (7.75, 2.500);
  \coordinate (B) at (9.22, 2.730);
  \coordinate (AE) at (9.00, 2.625);
  \coordinate (AF) at (8.74, 2.385);
  \coordinate (AH) at (9.50, 2.875);
  
  \coordinate (AI) at (9.25, 3.00);
  \coordinate (AG) at (10.00, 3.00);

\draw[fill=lightgray, opacity = 0.2, draw = none] (-4,8) rectangle (14,7.2);
\draw[fill=lightgray, opacity = 0.2, draw = none] (-4,3) rectangle (14,1.3);
  \draw (A) -- (P);
  \draw (C) -- (Q);
  \draw (E) -- (F);
  \draw [ultra thick] (D) .. controls (0.75, 4.75) and (0.75, 8.00) .. (G);
\draw [ultra thick] (q) -- (H);
\draw [ultra thick] (H) .. controls (0.75, 3.035) and (1.00, 2.000) .. (c_2);
\draw [ultra thick] (c_2) .. controls (3.75, 1.875) and (3.00, 2.625) .. (M);
\draw [ultra thick] (M) .. controls (4.26, 2.705) and (4.35, 2.625) .. (R);
\draw [ultra thick] (R) .. controls (4.75, 2.250) and (4.80, 2.825) .. (U);
\draw [ultra thick] (U) .. controls (5.72, 2.725) and (5.88, 2.600) .. (X);
\draw (q) -- (D);
\draw [ultra thick] (X) .. controls (7.00, 1.875) and (6.75, 1.750) .. (c_3);
\draw [ultra thick](c_3) .. controls (8.00, 1.875) and (7.75, 2.500) .. (AB);
\draw [ultra thick] (AB) .. controls (9.00, 2.625) and (8.74, 2.385) .. (B);
\draw [ultra thick] (B) .. controls (9.50, 2.875) and (9.25, 3.000) .. (AG);
\draw [ultra thick] (AG) -- (Q);
\draw [ultra thick] (G) -- (P);
  \fill[black] (q) circle (2pt);
  \node[below, black] at (q) {$q$};
  \fill[black] (r) circle (2pt);
  \node[above right, black] at (r) {$r$};
  \fill[black] (c_2) circle (2pt);
  \fill[black] (c_3) circle (2pt);
  \node[above, black] at (c_3) {$c_3$};
  \fill[black] (D) circle (3pt);
  \fill[black] (2.3,7.6) circle (3pt);
  \node[above, black] at (2.4,8) {$c_1$};
  
  \node[above right, black] at (A) {$P$};
  \node[above right, black] at (C) {$Q$};
  \fill[black] (q) circle (3pt);
  \node[above right, black] at (P_1) {$P_1$};
  \fill[black] (r) circle (3pt);
  \node[above right, black] at (r) {$r$};
  \fill[black] (c_2) circle (3pt);
  \node[above, black] at (c_2) {$c_2$};
  \node[below, black] at (P_2) {$P_2$};
  \fill[black] (c_3) circle (3pt);

  \draw (-4, 7.2) -- (14,7.2);
\end{tikzpicture}
\caption{A valid configuration. The geodesic rays $P_1$ and $P_2$ are drawn as bold lines, the bands of $P$ and $Q$ are shaded in gray.}
\label{fig:valid-configuration}
\end{figure}

\begin{itemize}
    \item Cop $c_1$ guards a geodesic ray $P_1$ starting at a neighbour of $q$ outside the band of $Q$ and eventually following $P$ to the right such that $P_1 + q$ is still a geodesic ray.
    \item Cop $c_2$ guards a geodesic ray $P_2$ in the band of $Q$ starting at $q$ and eventually following $Q$ to the right.
    \item Cop $c_3$ is on $P_2$.
    \item The robber territory, that is, the set of all points which can be reached from the robber's current position $r$ without crossing $P_1$ or $P_2$, is contained in the area between $P_1$ and $P_2$ to the right of $q$.
    \item The robber territory contains only finitely many vertices in the band of $Q$; in some sense this means that $Q$ is the top-most geodesic in its band (if $P = Q$, then it means that after splitting, the copy denoted by $Q$ forms a band by itself). To see why this is possible even if there is only one band in the original graph, see Lemma~\ref{lem:gettingstarted} below.
\end{itemize}

We call a configuration as above a \emph{valid configuration}. We call a configuration \emph{weakly valid} if it satisfies all conditions except the third one, that is, there is no restriction on the position of $c_3$. Given a (weakly) valid configuration, let us call a band a \emph{robber band}, if it lies (strictly) between the band containing $P$ and the band containing $Q$.

Using these definitions, our strategy for proving Theorem \ref{thm:configurations} can be summarised as follows. We first give a strategy for the cops to build an initial valid configuration, see Lemma \ref{lem:gettingstarted}. Next, in Lemma \ref{lem:norobberbands} we show that if a valid configuration has no robber band, then the cops have a winning strategy. Finally, Lemma \ref{lem:reducebands} shows that there is a way for the cops to reduce the number of robber bands.

Observe that the definitions of $P_1$ and $P_2$ immediately imply that Lemma \ref{lem:nodoublebypath} applies, and thus there is no vertex simultaneously on a bypath of $P_1$ and $P_2$. This will be crucial as it allows us to apply Lemma \ref{lem:leisurelyguarding} and (provided that the robber's position remains bounded) move $c_3$ into position while $c_1$ and $c_2$ keep guarding these geodesics.

\begin{lemma}
    \label{lem:gettingstarted}
    There is a strategy using three cops where only two cops have to move at any given turn such that after finitely many turns we have a valid configuration.
\end{lemma}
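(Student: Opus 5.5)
We build the initial valid configuration by first cutting the cylinder along a single periodic geodesic, so that $P=Q$. Fix a periodic geodesic $P$ in $G$, which exists by~\cite{PolatWatkins1995}, and choose it to be the topmost geodesic of its band in the sense of Lemma~\ref{lem:topmost}. Lemma~\ref{lem:topmost} is stated for the strip, so we apply it after splitting along some periodic geodesic of the band and then replace $P$ by the topmost geodesic produced there.

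The first step is to make $P$ impassable. We bring $c_1$ into the robber's wide shadow on $P$, which is possible using the $\sigma$-teleportation: $P$ is periodic, so $c_1$ can teleport to a translate of its position close to the shadow and then walk there. Only $c_1$ moves during this, the other cops stay put, and Lemma~\ref{lem:wideshadow} lets $c_1$ remain in the shadow afterwards. Once $c_1$ guards $P$, the robber cannot cross $P$. We then split along $P$ as in Remark~\ref{rmk:stripembedding} and play in the strip, with the top copy denoted $P$ and the bottom copy denoted $Q$. Since $P$ was topmost in its band, the bottom copy $Q$ has no bypath on its side of the strip. Hence the band of $Q$ in the split graph consists of $Q$ and only finitely many extra vertices per period, and the last condition of a valid configuration can be arranged.

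The second step is to move $c_2$ into the wide shadow of the robber on $Q$, that is, on the bottom copy. Here $c_1$ must keep guarding the top copy $P$ while $c_2$ travels. This costs two movers per turn, which is permitted, and leaves $c_3$ idle.

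The third step replaces these two-sided guards by the rays required in the definition. We choose $q$ on $Q$ to the left of the robber's current position $r$, far enough that the robber region to the right is well defined. Using Lemma~\ref{lem:ray-q-to-P}, we take $P_2$ to be the ray $qQ$, which trivially lies in the band of $Q$ and starts at $q$. We take $P_1$ to be a geodesic ray from a neighbour of $q$ that eventually follows $P$, with $P_1+q$ geodesic; we choose it most counter-clockwise, so that Lemma~\ref{lem:almostperiodic} controls which side bypaths attach to. To confine the robber to the right of $q$, we block the leftward escape by the $\sigma$-symmetry. Because of teleportation the cops may treat the robber's left and right as equivalent, so we orient things so that the robber is to the right. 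Alternatively, we use $c_3$ to guard the finite geodesic segment from $q$ up to $P_1$ leftwards, since the left boundary is just $P_1$ near $q$.

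The transfer is done one cop at a time. $c_1$ switches from guarding $P$ to guarding $P_1$: far to the right the two rays coincide, and near $q$ the wide shadows agree once the robber is far away. The same holds for $c_2$ switching from $Q$ to $P_2$. Finally, $c_3$ walks onto $P_2$. For this we invoke Lemma~\ref{lem:leisurelyguarding}, using Lemma~\ref{lem:nodoublebypath} to rule out any vertex lying on bypaths of both $P_1$ and $P_2$. Whenever $c_1$ or $c_2$ rests, $c_3$ advances one step.

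The main obstacle is this last step when the robber does not stay bounded. In that case Lemma~\ref{lem:leisurelyguarding} never guarantees a rest. The remedy is to re-choose $q$ further to the right, which may be done repeatedly because the rays eventually follow $P$ and $Q$ periodically. Combined with teleportation, this means $c_3$ only needs to reach some translate, so its distance to $P_2$ stays bounded modulo $\sigma$. Making this bookkeeping rigorous, together with the switching of guarded geodesics, is where I expect most of the work.
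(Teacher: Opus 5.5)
Your outline matches the paper's: split along a periodic geodesic one of whose copies is bypath-free, take $P_2=qQ$, and take $P_1$ from Lemma~\ref{lem:ray-q-to-P}. The gap is in the steps that get cops into wide shadows.

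\textbf{A single cop cannot force its way into a shadow.} This holds even with $\sigma$-teleportation. Teleporting only moves a cop within its $\sigma$-orbit, so only its position modulo $\sigma$ matters, and the robber can drag the shadow along with the cop. For example, take the cylinder cover $\mathbb Z\times\mathbb Z/n\mathbb Z$ of an $m\times n$ toroidal grid, with $\sigma(x,y)=(x+m,y)$ and $P=\{y=0\}$.
\begin{itemize}
\item A robber at $(x,h)$ with $1\le h<n/2$ has $S_P(r)=\{(u,0): |u-x|\le h\}$.
\item Suppose he sits in a row containing neither idle cop, copies every horizontal step of $c_1$, and otherwise stands still.
\item Then the horizontal offset between him and $c_1$ is constant modulo $m$, so for $m$ large $c_1$ never reaches the shadow (and never catches him).
\end{itemize}
The fact that shadow boundaries move at most one step per turn (Lemma~\ref{lem:wideshadow}) only helps once the shadow is trapped. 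That is the paper's idea: $c_1$ and $c_2$ teleport to vertices of $P$ on opposite sides of $S_P(r)$ and both walk towards each other. The shadow cannot slip past either cop, so one of them must enter it. This uses two movers, which is allowed exactly because $c_3$ is still idle. Your second step, sending $c_2$ alone after the shadow on the bottom copy, fails for the same reason. It is also unnecessary: a cop in $S_P(r)$ in the unsplit cover already guards both copies.

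\textbf{The third step needs the same pincer, and your shortcut is false.} A cop in $S_P(r)$ is in general not in $S_{P_1}(r)$, however far left $q$ is. By Corollary~\ref{cor:wideshadowboundary}, the end of $S_{P_1}(r)$ away from $q$ is determined by distances to the first vertex of $P_1$, not by $S_P(r)$. In the grid above, after splitting, let $P_1$ go straight up from $q$ and then right along $P$. Then $S_{P_1}(r)$ is a single vertex, while $S_P(r)$ has at least three. Choosing $q$ to the left of $r$ is free, but confining the robber to the right of $q$ requires a cop in $S_{P_1}(r)$. Neither the $\sigma$-symmetry nor your alternative with $c_3$ provides one.

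The paper handles this as follows:
\begin{itemize}
\item It keeps $c_2$ guarding $P$.
\item It shifts $P_1$ far left so that $S_{P_1}(r)$ lies in $P_1\cap P$.
\item It teleports $c_3$ and $c_1$ to vertices of $P_1\cap P$ before and after this shadow.
\item It moves only $c_1$ towards $c_3$ until one of them enters the shadow and starts guarding $P_1-q$.
\end{itemize}

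This also dissolves what you call the main obstacle. The other cop then sits on a vertex of $P$, i.e.\ of $Q$ in the unsplit cover, and simply teleports onto $P_2$. No walking and no use of Lemma~\ref{lem:leisurelyguarding} is needed.

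\textbf{Choice of $P$.} A topmost geodesic in an auxiliary strip (Lemma~\ref{lem:topmost}) need not have a bypath-free copy after splitting along it. Bypaths in the cylinder can cross the geodesic you split along first. The paper gets the bypath-free copy directly from Lemma~\ref{lem:almostperiodic}(1), by building $P$ from a most counter-clockwise ray in the cylinder.
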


\begin{proof}
    We start by defining a suitable double ray $P$, proceeding similarly to the proof of Lemma \ref{lem:topmost}. More precisely, let $v$ be any vertex and let $P_0$ be a most counter-clockwise geodesic ray starting at $v$ and tending to the right. By Lemma \ref{lem:almostperiodic}, $P_0$ is $\mu$-almost periodic for some $\mu$, and by removing an initial piece, we may assume that $\sigma^k(P_0)$ is a subgraph of $P_0$ for some $k$. Define $P_i = \sigma^{-ik}(P_0)$, and let $P$ be the union of all $P_i$. The exact same argument as in Lemma \ref{lem:topmost} shows that $P$ is a periodic geodesic, and that all bypaths of $P$ attach to the same side of $P$. In particular, if we apply the construction of Remark \ref{rmk:stripembedding} to $P$, then one of the two copies of $P$ has no bypath in the resulting graph.
    
    We now proceed with the cops' strategy. All cops start at the same vertex $v$ of $P$. Once the robber has chosen a starting vertex $r$, cop $c_1$ teleports to a vertex $\sigma^k(v)$ on $P$ which comes before the wide shadow of $r$ on $P$, and $c_2$ teleports to a vertex $\sigma^k(v)$ which comes after the wide shadow. In every subsequent turn, $c_1$ and $c_2$ move towards each other on $P$ until one of them (without loss of generality $c_2$) reaches the wide shadow of the robber. Note that this must happen since by Lemma \ref{lem:wideshadow}, every single robber step moves the boundaries of the wide shadow at most one step along $P$, and thus the wide shadow cannot skip over the cops' positions.

    From then on, $c_2$ remains in the wide shadow of the robber on $P$. Since the robber cannot cross $P$, we may split $P$ and henceforth assume that the game takes place in this split graph embedded in an infinite strip described in Remark \ref{rmk:stripembedding}. Denote the copy of $P$ at the top of this strip by $P$, and the copy at the bottom by $Q$. As noted above, one of the two copies has no bypath, and we may assume without loss of generality that this copy is $Q$.
    
    Let $S$ be a geodesic ray starting at $Q$ and eventually following $P$ to the right; such a ray exists by Lemma \ref{lem:ray-q-to-P}. We may without loss of generality assume that the second vertex of $S$ does not lie in the band of $Q$. Pick a shift $P_1 = \sigma^{-k} (S)$ so that the wide shadow of the robber's position on $P_1$ is entirely contained in $P$. Denote the first vertex of $P_1$ by $q$.
    
    By picking $k$ large enough, we may assume that there is a vertex in $P_1 \cap P$ coming before the wide shadow of $r$ on $P_1$ which $c_3$ can teleport to. Cop $c_3$ teleports to this vertex, and cop $c_1$ teleports to some vertex of $P$ which comes after the wide shadow of $r$ on $P_1$. In subsequent turns, $c_1$ moves towards $c_3$ on $P_1$ until either $c_1$ or $c_3$ is contained in the robber's wide shadow. Note that unlike before, $c_1$ and $c_3$ cannot both move because $c_2$ is guarding $P$ and therefore potentially has to move on every single turn. Once one of them is contained in the robber's shadow, that cop (without loss of generality $c_1$) starts guarding $P_1$, or more precisely, $P_1 - q$, and the other cop can stop moving.

    Letting $P_2$ be the sub-ray of $Q$ starting at $q$ and tending to the right we notice that $c_2$ guards $P_2$. Recalling that $P=Q$ in the original graph, we may assume that $c_3$ is already positioned on $Q$, and without loss of generality on $P_2$ and we have thus arrived at a valid configuration.
\end{proof}

\begin{lemma}
    \label{lem:norobberbands}
    If in a weakly valid configuration there is no robber band, then the cops have a strategy to guarantee capture after finitely many steps.
\end{lemma}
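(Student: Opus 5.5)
The plan is to show that when there is no robber band, the robber territory is confined to a bounded region. After that, $c_3$ can be brought into position and the planar result of~\cite{GonzalezMohar2024} finishes the game.

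\textbf{Step 1: the robber territory is finite.} By a valid configuration, the robber territory contains only finitely many vertices of the band of $Q$. It lies between $P_1$ and $P_2$ to the right of $q$, and $P_1$ eventually follows $P$. Suppose the territory were infinite. By local finiteness and periodicity, the region between the bands of $P$ and $Q$ would then contain arbitrarily long stretches to the right. I would apply Lemma~\ref{lem:almostperiodic} to the subgraph induced by the robber territory together with its boundary, and argue as in Lemma~\ref{lem:topmost}. This produces a periodic geodesic, and I claim it lies strictly between the bands of $P$ and $Q$, so its band would be a robber band, a contradiction. The point needing care is that the geodesic obtained is periodic in the whole graph $G$ (after splitting along $P$), not merely in the subgraph. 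I would try to get this from Lemma~\ref{lem:switchgeodesic} and Lemma~\ref{lem:samespeedgeodesics}: a most counter-clockwise ray in the territory must have the same ``speed'' as $P$ and $Q$, or else $P_1$ or $P_2$ would fail to be geodesic.

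If this approach stalls, the alternative is to use Lemma~\ref{lem:ray-q-to-P} directly. Take vertices of the territory far to the right and a geodesic ray from each to $Q$. These rays cannot end in the band of $P$ or of $Q$ while staying in the territory for long. A compactness argument over shifts would then yield a periodic geodesic avoiding both bands.

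\textbf{Step 2: bring in the third cop and finish.} Once the territory is contained in a finite set $F$, every robber position $r$ satisfies $d(r,q)\le D$ for some fixed $D$. By Lemma~\ref{lem:nodoublebypath}, no vertex lies on bypaths of both $P_1$ and $P_2$. Hence Lemma~\ref{lem:leisurelyguarding} applies: $c_1$ and $c_2$ keep guarding, and at least once in every $N$ consecutive turns one of them stands still. In exactly those turns $c_3$ moves, so $c_3$ reaches any prescribed vertex of the finite region $F \cup P_1\cup P_2$ (restricted to a bounded window) in finitely many turns. The remaining game is played on a finite plane graph: a finite piece of the strip bounded by the guarded segments of $P_1$ and $P_2$, which are geodesics. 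This is exactly the setting of the planar argument of~\cite{GonzalezMohar2024}, in which two cops guard geodesic boundary paths and the third repeatedly shrinks the territory. I would invoke their reduction, using the leisurely guarding from Lemma~\ref{lem:leisurelyguarding} in place of theirs, to conclude capture in finitely many steps.

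I expect Step 1 to be the main obstacle, namely proving that an unbounded robber territory yields a genuine periodic geodesic in a band different from those of $P$ and $Q$. Step 2 is essentially a transfer of the known planar strategy.
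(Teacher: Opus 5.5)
\textbf{There is a genuine gap: the finiteness claim in Step 1 is false, and Step 2 depends on it.} In a weakly valid configuration without robber bands, the robber territory need not be finite, and typically it is not. Since $P_1$ eventually follows $P$ and $P_2$ eventually follows $Q$, the territory contains everything strictly between $P$ and $Q$ sufficiently far to the right.

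``No robber band'' only says that no \emph{band} lies strictly between the bands of $P$ and $Q$. It does not say there are no vertices there. For example, the band of $P$ may contain a second periodic geodesic disjoint from $P$, linked to $P$ by bypaths attaching below $P$, and that geodesic runs through the territory. There may also be vertices lying on no periodic geodesic at all. This is why your construction yields no contradiction: applying Lemma~\ref{lem:almostperiodic} and the argument of Lemma~\ref{lem:topmost} to the territory plus its boundary can simply return $P$, or another geodesic in the band of $P$. Your compactness fallback fails for the same reason.

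Consequently the bound $d(r,q)\le D$ in Step 2 is unavailable. The robber may run off to the right forever, and then Lemma~\ref{lem:leisurelyguarding} never frees $c_3$. Step 2 also leaves the hypotheses of \cite[Theorem 4.4]{GonzalezMohar2024} unchecked. The guarded paths must share endpoints, and the lower one must be bypath-free in the territory. In a weakly valid configuration, $P_2$ may still have bypaths inside the territory.

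\textbf{The missing idea.} The cops do not prove that the territory is finite; they change the guarded geodesics so as to cut it off. The paper does this in two stages.

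\emph{Stage 1: push $P_2$ into $Q$.}
\begin{itemize}
\item If $c_2$ ever lands on $Q$, which is forced if the robber runs far right, then by Lemma~\ref{lem:wideshadowextra} it is already in the wide shadow on $qQ$ and simply switches to guarding $qQ$.
\item Otherwise $d(r,q)$ stays bounded. Then Lemma~\ref{lem:leisurelyguarding} together with Lemma~\ref{lem:nodoublebypath} lets $c_3$ reach a ray $P_2'$ of the band of $Q$ lying just above $P_2$.
\item After that, either the guarded lower ray moves closer to $Q$, or the robber is trapped in a finite bypath-free region between $P_2$ and $P_2'$.
\end{itemize}

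\emph{Stage 2: close off $P_1$, with $P_2\subseteq Q$.}
\begin{itemize}
\item Take most counter-clockwise paths $S_i$ from $q$ to $\sigma^i(q)$ between $P$ and $Q$.
\item A periodic piece shared by infinitely many $S_i$ generates a periodic geodesic $T$.
\item This is exactly where the hypothesis enters. With no robber band, $T$ lies in the band of $P$, and the counter-clockwise property forces $T=P$.
\item Hence, for large $i$, the most counter-clockwise geodesics from $q$ and from $r$ to $\sigma^i(q)$ meet at some $v\in P$ beyond $c_1$.
\item Then $c_1$ is already in the robber's wide shadow on the finite geodesic $qP_1vS_i\sigma^i(q)$, without any cop moving.
\end{itemize}

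Only after these two stages is the territory finite, bounded by two paths with common endpoints, the lower one bypath-free. That is precisely the setting needed to invoke the planar argument.
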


\begin{proof}
    Note that it is sufficient to show that we can reduce the game to the following situation from which we can follow the proof of~\cite[Theorem 4.4]{GonzalezMohar2024}:
    \begin{itemize}
        \item Cops $c_1$ and $c_2$ guard (finite) paths $P_1$ and $P_2$ starting and ending at the same vertices, respectively, 
        \item The robber territory $R$ is the (finite) subgraph embedded in the finite region bounded by $P_1$ and $P_2$,
        \item $P_1$ is a geodesic in the subgraph induced by $R$ and $P_1$,
        \item $P_2$ is a geodesic and has no bypath in the subgraph induced by $R$ and $P_2$.
    \end{itemize}

    We first show that the cops have a strategy to reduce the number of vertices in the robber territory that lie between $Q$ and $P_2$, in other words, we may eventually assume that $P_2$ is contained in $Q$.
    
    If $c_2$ is ever on $Q$, then (since both $q$ and any vertex far enough along $P_2$ are contained in $Q$) it follows from Lemma \ref{lem:wideshadowextra} that $c_2$ is in the wide shadow of the robber on the geodesic subray $qQ$ of $Q$ starting at $q$. Thus $c_2$ can switch to guarding $qQ$ without changing positions. 

    Hence we may assume that the distance between the robber and $q$ remains bounded. By Lemma \ref{lem:nodoublebypath}, no vertex is contained in a bypath of both $P_1$ and $P_2$. Thus, by Lemma \ref{lem:leisurelyguarding} we may assume that there are infinitely many turns on which at least one of $c_1$ and $c_2$ does not move. 
    
    Let $P_2'\neq P_2$ be a geodesic ray in the band of $Q$ such that no vertex of $P_2'$ is embedded below $P_2$ and there is no bypath of $P_2$ strictly between $P_2$ and $P_2'$. Whenever $c_1$ or $c_2$ does not move, $c_3$ moves towards $P_2'$, and then along $P_2'$ towards the wide shadow of the robber. Since we assume that $d(r,q)$ is bounded, $c_3$ eventually arrives at a vertex in the wide shadow of the robber. At this point, the robber is either between $P_1$ and $P_2'$ or between $P_2'$ and $P_2$. In the first case, by switching from $P_2$ to $P_2'$ we have reduced the number of vertices between $Q$ and the ray which is guarded. In the second case, if $c_2$ and $c_3$ keep guarding $P_2$ and $P_2'$, respectively, then the robber is restricted to a finite set of vertices which does not contain any bypath of either of the two geodesics, and thus the cops have a winning strategy.

    Now assume that $P_2$ is contained in $Q$. Since $Q$ has no bypath in the robber territory we only need to reduce the infinite geodesics $P_1$ and $P_2$ to finite paths. We show that we can achieve this without moving the cops.

    Let $H$ be the subgraph of $G$ embedded in the strip between $P$ and $Q$, including all vertices of $P$ and $Q$ and all edges of $P$, but none of the edges of $Q$. Without loss of generality assume that $H$ is invariant under $\sigma$; otherwise replace $\sigma$ by a suitable power of $\sigma$. 
    
    Let $S_i$ be a most counter-clockwise path in $H$ from $q$ to $\sigma^i(q)$. By Lemma \ref{lem:almostperiodic} there is some $\mu$ such that each $S_i$ is $\mu$-almost periodic. Note that since the period of each $S_i$ is at most $\mu$, there are (up to shifts by $\sigma$) only finitely many possible periodic pieces, and thus infinitely many $S_i$ share the same periodic piece. Denote this piece $S$, its starting vertex by $x$ and its last vertex by $y = \sigma^m(x)$.

    Our next goal is to show that the double ray obtained by repeated copies of $S$ is a geodesic. Let $x'$ be the vertex of $P$ closest to $x$.  Moreover, let $\delta = d(x,x')$, let $l = d(x,\sigma^m(x))$ be the length of $S$, and let $l' = d(x',\sigma^m(x'))$ be the length of the segment of $P$ between $x'$ and $\sigma^m(x')$. If $l < l'$, then
    \begin{align*}
        d(x', \sigma^{(2\delta+1)m}(x')) 
        &\leq d(x,x') + d(x, \sigma^{(2\delta+1)m}(x)) + d(\sigma^{(2\delta+1)m}(x), \sigma^{(2\delta+1)m}(x'))\\
        &\leq 2\delta + (2 \delta + 1) l \\
        &\leq 2\delta + (2 \delta + 1) (l'-1)\\
        &< (2 \delta + 1) l',    
    \end{align*}
    contradicting the fact that $P$ is a geodesic. If $l' < l$, we can apply the same argument to some $S_i$ containing at least $(2 \delta + 1)$ copies of $S$. Hence $l = l'$.

    Next, let $T$ be a double ray composed of copies of $S$, that is, $T$ is the union of $\sigma^{im}(S)$ for $i \in \mathbb Z$. If $T$ was not a geodesic in $G$, then there would be some $i$ such that $d(x,\sigma^{im}x) < i l$, and an analogous computation as above would show that $d(x', \sigma^{(2\delta+1)im}(x')) < (2 \delta + 1) il'$, contradicting the fact that $P$ is a geodesic.

    Hence $T$ is a periodic geodesic and thus must be contained in some band. Since there are no robber bands, $T$ is contained in the same band as $P$. Thus there is a bypath of $T$ containing a vertex in $P$. Since $P$ corresponds to the upper boundary of the strip, if $T \neq P$, this bypath would lie on the counter-clockwise side of $T$, and thus there would be some $S_i$ for which we can find a bypath on the counter-clockwise side, contradicting Lemma \ref{lem:almostperiodic}. Hence $T=P$.

    An analogous argument as above shows that if we take most counter-clockwise geodesic from the robber's position $r$ to $\sigma^i(q)$ for infinitely many $i$, then there are infinitely many $i$ for which the periodic piece is contained in $P$. Hence the most counter-clockwise geodesics from $q$ and $r$ to $\sigma^i(q)$ intersect in arbitrarily large subpaths of $P$ as $i$ tends to infinity.

    Now let $i$ be large enough such that the robber's position is in the finite region between $S_i$ and $Q$, and the most counter-clockwise geodesics from $q$ and $r$ to $\sigma^i(q)$ intersect in a vertex $v$ on $P$ which comes after the position of $c_1$ on $P_1$. Let $P_1'=qP_1vS_i\sigma^i(q)$. We note that the piece of $P_1$ between $q$ and $v$ has the same length as the corresponding piece of $S_i$ since both paths are geodesics. It follows that $P_1'$ has the same length as $S_i$ and thus is a geodesic between $q$ and $\sigma^i(q)$. 

    We claim that the position $c_1$ of $c_1$ is already in the wide shadow of $r$ on $P_1'$. To see this, note that $d(q,c_1) \leq d(q,r)$ because $c_1$ is in the wide shadow of the robber on $P_1$. Moreover, \[
    d(\sigma^i(q),c_1) = d(\sigma^i(q),v) + d(v,c_1) \leq  d(\sigma^i(q),v) + d(v,r) = d(\sigma^i(q),r),
    \]
    where the second step again follows from the fact that $c_1$ is in the wide shadow of the robber on $P_1$, and the two equalities are due to the fact that most counter-clockwise geodesics from $q$ and $r$ to $\sigma^i(q)$ both go through the vertex $v$. By the `moreover' part of Lemma \ref{lem:wideshadowextra}, $c_2$ is in the wide shadow of $r$ as claimed. Thus, we are exactly in the situation we set out to achieve, and we can follow the proof of~\cite[Theorem 4.4]{GonzalezMohar2024}.
\end{proof}

Finally, we provide a strategy for the cops to reduce the number of robber bands.

\begin{lemma}\label{lem:reducebands}
    Starting from any valid configuration with robber bands, the cops have a strategy to obtain a valid configuration with fewer robber bands. 
\end{lemma}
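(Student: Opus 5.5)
Starting from a valid configuration with robber bands, I would pick the robber band $\mathcal B$ directly above the band of $Q$, that is, the lowest band strictly between the bands of $P$ and $Q$. By Lemma~\ref{lem:topmost} (applied in the strip graph) it has a topmost periodic geodesic $Q'$. By Lemma~\ref{lem:ray-q-to-P} there is a geodesic ray $R$ from $q$ eventually following $Q'$ to the right. Replacing $R$ by a suitable shift, and using Lemma~\ref{lem:switchgeodesic}, I can arrange that $R$ starts inside the band of $Q$ and leaves it along a ray compatible with $P_2$. The target configuration keeps $P_1$ and $q$ (or moves $q$ to a vertex $q'$ of $Q'$) and takes $Q'$ as the new $Q$. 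Since $\mathcal B$ is then no longer strictly between the two bands, the number of robber bands drops by at least one.

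The play goes as follows. While $c_1$ and $c_2$ guard $P_1$ and $P_2$, Lemma~\ref{lem:nodoublebypath} guarantees that no vertex lies on bypaths of both. I would run the strategy of Lemma~\ref{lem:leisurelyguarding} with base vertex $q$, and split into two cases.

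\emph{Case 1: $d(r,q)$ stays bounded.} Then infinitely often one of $c_1,c_2$ rests, and on those turns $c_3$ moves. Starting from $P_2$, $c_3$ walks to a vertex of $Q'$ and then along a geodesic ray $P_3$ that starts at some $q'\in Q'$, eventually follows $Q'$, and has the property that $P_1$ (re-routed through $q'$ via Lemma~\ref{lem:switchgeodesic}) together with $q'$ is still a geodesic. Because the robber is bounded, the wide shadow of $r$ on $P_3$ stays in a bounded region, so $c_3$ reaches it after finitely many of its moves; Lemma~\ref{lem:wideshadow} keeps the shadow from jumping past $c_3$. Once $c_3$ guards $P_3$, the robber is either above $P_3$ or below it.
\begin{itemize}
\item If it is above, $c_3$ becomes the new $c_2$ guarding a ray in the band of $Q'$. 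Since $Q'$ is topmost, the territory meets that band in only finitely many vertices. The old $c_2$ is released and moves to $P_3$ to restore the third bullet of validity. This is fine because $c_1$ may rest by Lemma~\ref{lem:leisurelyguarding}, or by the `moreover' part once $c_2$ is in the first or second vertex of the shadow.
\item If it is below, the robber is trapped between $P_3$ and $P_2$. This is a weakly valid configuration with no robber band, so Lemma~\ref{lem:norobberbands} finishes, after renaming which rays play the roles of $P_1$ and $P_2$.
\end{itemize}

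\emph{Case 2: $d(r,q)$ is unbounded.} The robber escapes to the right, and I would argue that this forces progress with no extra cop moves. For $c_1$, Lemma~\ref{lem:wideshadowextra} and Corollary~\ref{cor:wideshadowboundary} show that once the robber is far to the right, the right boundary of its shadow is determined by far vertices, where $P_1$ coincides with $P$. For $c_2$, far enough out $c_2$ sits on $Q$, and as in the proof of Lemma~\ref{lem:norobberbands} it can switch to guarding $qQ$ and then a ray through $Q'$. To use this I would make $c_3$ pre-position on $P_2$ far to the right, at a vertex of $R\cap Q'$ suitably shifted via $\sigma$-teleportation. The idea is that whenever the robber moves far right, it has to pass $c_3$'s column, and there $c_3$ is already in the wide shadow on the ray $R$. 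Then $c_3$ takes over $R$ and $c_2$ is freed, which again yields the case-1 outcome with new base point $q'$.

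The main obstacle I expect is exactly this mixing of the two cases. The robber may oscillate, staying bounded long enough to deny $c_3$ progress toward $Q'$ and then retreating before the leisurely turns accumulate. My first attempt would be a potential argument: choose $D$ and the shift of $R$ adaptively, so that each excursion beyond distance $D$ lets $c_3$ teleport (which costs no move) to a copy of its target ahead of the robber. After that, the bounded phase needs only $N(D)$ further turns. Verifying that $c_3$ really lands in the wide shadow on the new ray, and that $P_1+q'$ remains geodesic, is where Lemma~\ref{lem:switchgeodesic} and the topmost property of $Q'$ must be used carefully.
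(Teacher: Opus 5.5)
Your proposal has a genuine gap, and it is the one you flag yourself. Splitting into ``$d(r,q)$ stays bounded'' and ``$d(r,q)$ is unbounded'' does not give a strategy, because the cops must commit to moves online. The robber can alternate ever longer excursions with returns close to $q$, so that for no fixed $D$ do the $N(D)$ consecutive bounded steps required by Lemma~\ref{lem:leisurelyguarding} occur.

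Your Case~2 mechanism does not repair this. You never say how $c_3$ gets from $P_2$, which lies in the band of $Q$, to a vertex of $R\cap Q'$. The strip's shift maps each band to itself, so teleporting keeps $c_3$ in the band of $Q$. Every step of the walk to $Q'$ must therefore be paid for by a free turn, and you only have those under the Case~1 hypothesis. Likewise, nothing justifies $c_2$ switching from $qQ$ to a ray through $Q'$ without moving. Also, the robber crossing $c_3$'s column does not put $c_3$ in its wide shadow on $R$.

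Case~1 has loose ends too. A ray $P_3$ starting at $q'\in Q'$ splits the territory into ``above'' and ``below'' only if $q'$ lies on a guarded path. If $q'\in P_1$, the ``below'' part is bounded by three guarded paths: the initial segment of $P_1$, $P_3$ and $P_2$. That is not a weakly valid configuration. Finally, walking the old $c_2$ onto $P_3$ again relies on Lemma~\ref{lem:leisurelyguarding}, i.e.\ on a bounded robber.

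The idea you are missing is the paper's level function $\ell(v)=d(v,q)$. By Corollary~\ref{cor:wideshadowboundary}, on any geodesic ray starting at $q$ the vertex at level $\ell(r)$ is the last vertex of the robber's wide shadow. So a cop parked at level $i$ on such a ray is in the shadow the moment the robber reaches level $i$. Since levels change by at most one per move, the robber cannot get past it, and the parked cop acts as a gate. This is the rigorous form of your pre-positioning. The paper's preliminary step teleports $c_3$ along $P_2$ to a level above $\ell(r)$ and walks it back on free turns until $\ell(c_3)=\ell(r)$.

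The paper then cuts the territory at level $i$ into regions $A$, $B$, $C$. In each region two cops guard and the third is parked as a gate, so crossing level $i$ costs no extra moves and $i$ never decreases.
\begin{itemize}
\item In $A$ the robber's level is at most $i$, so it is automatically bounded. Lemmas~\ref{lem:leisurelyguarding} and~\ref{lem:nodoublebypath} then give free turns on which $c_3$ climbs the most counter-clockwise ray $P_3$.
\item In $B$, if the robber stays bounded, free turns let $c_2$ climb; Lemma~\ref{lem:almostperiodic} is what makes this work. If the robber runs off, its shadow on $P_3^i$ moves into $Q'$ and drags $c_3$ onto $Q'$. So escaping produces progress instead of blocking it.
\item Entering $C$ traps the robber between $P_2$ and $P_3$, which starts on $P_2$, and Lemma~\ref{lem:norobberbands} applies.
\end{itemize}
A second gated phase, with $P_1'$, $P_2'$ and $P_3'=P_1yS$, then brings $c_2$ onto $S$ in the band of $Q'$. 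The choice $d(q,\sigma^k(q))>3m$ handles a robber trapped below $P_3'$. Using two phases with different rays is also what resolves the conflict in your Case~1: there you need a ray lying in the band of $Q'$, and also a ray attached to $P_2$ or $q$.
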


\begin{proof} 
Throughout this proof, let us define the level $\ell(v)$ of a vertex $v$ by $\ell(v) = d(v,q)$.

Let us denote by $Q'$ the top geodesic of the band just above the band containing $Q$; note that this band is necessarily a robber band. Our strategy splits into two phases. In the first phase, the cops' aim is to move $c_3$ to $Q'$ while $c_2$ still is at $P_2$. In the second phase $c_2$ moves to the band of $Q'$ to arrive at a valid configuration where $Q'$ replaces $Q$.

Note that we may assume that $c_2$ starts out at the same level as the robber. If not, teleport $c_3$ to a vertex further from $q$ than $r$ and change $P_2$ so that it passes through the new position of $c_3$. If $\ell(r) < \ell(c_3)$ for long enough, then by Lemmas \ref{lem:leisurelyguarding} and \ref{lem:nodoublebypath} there is some step on which one of $c_1$ and $c_2$ does not move, so we can decrease the level of $c_3$ while still only moving 2 cops in each turn. Hence we eventually end up with $\ell(r) = \ell(c_3)$ at which point $c_2$ and $c_3$ swap roles. Moreover, by repeating the first step of this argument, we may assume that $\ell(c_3) > \ell(r)$.

Let $P_3$ be a most counter-clockwise geodesic ray starting at the position of $c_3$ and tending to the right in the subgraph of $G$ induced by all vertices between (and including) the bands containing $Q$ and $Q'$. By Lemma \ref{lem:almostperiodic}, $P_3$ is $\mu$-almost periodic and thus eventually follows a periodic geodesic to the right. Since all bypaths of $P_3$ attach to the same side, this periodic geodesic must be the topmost geodesic in its band. If it wasn't $Q'$, then by Lemmas \ref{lem:ray-q-to-P} and \ref{lem:switchgeodesic} there would be a more counter-clockwise geodesic than $P_3$. Thus, $P_3$ eventually follows $Q'$ to the right.

Let $x$ be the first vertex of $P_3$ and let $y$ be the first vertex of $P_3$ contained in $Q$. Note that we can extend $qP_2\sigma^k(q)$ to a periodic geodesic whenever  $\sigma^k(q) \in P_2$. Hence $P_2xP_3$ is a geodesic by Lemma \ref{lem:switchgeodesic}. Thus the lengths of $qQy$ and $P_2xP_3y$ coincide, showing that $P_2xP_3yQ$ is a geodesic. Cop $c_2$ is positioned to the left of $x$, so $c_2$ can switch to guarding this geodesic by Lemma \ref{lem:wideshadowextra} and we may thus without loss of generality assume that $P_2$ is this geodesic; in particular, $P_2$ coincides with $P_3$ between $x$ and $q'$.

Let $v_i$ denote the (unique) vertex of $P_3$ at level $i$, and let  $P_3^i$ be the subray of $P_3$ starting at $v_i$, that is, the ray obtained from $P_3$ by removing all vertices at level $<i$. We point out that this is only defined if $P_3$ has a vertex at level $i$.

If $P_3^i$ is defined and $\ell(v) > i$, then there either is a path from $v$ to $P_1$ in the robber territory only using vertices whose level is at least $i$ which does not cross $P_3^i$, or there is such a path from $v$ to $P_2$, but not both. In the first case we say that $v$ lies above $P_3$, in the second we say that $v$ lies below $P_3$. 

Split the vertices of $R \setminus P_3^i$ into three regions, and define cop strategies for each region as follows; see Figure \ref{fig:first-band-reduction-phase} for a sketch of the regions.
\begin{itemize}
    \item Region $A$ consists of vertices with $\ell(v) \leq i$. When the robber is in $A$, cops $c_1$ and $c_2$ maintain positions in the wide shadow of the robber on $P_1$ and $P_2$, respectively. The level of $c_2$ is the same or one less than the level of the robber. Cop $c_3$ is at $v_i$ or $v_{i+1}$, and if $c_3$ is at $v_i$, then $c_2$ is at the same level as the robber.
    \item Region $B$ consists of vertices with $\ell(v) > i$ above $P_3$. While the robber is in $B$, cops $c_1$ and $c_3$ maintain positions in the wide shadow of the robber; $c_3$ only moves if its current position is not in the wide shadow of the robber. Cop $c_2$ is at the unique vertex of $P_2$ at level $i$.
    \item Region $C$ consists of vertices with $\ell(v) > i$ below $P_3$. If the robber ever enters $C$, cops $c_2$ and $c_3$ henceforth maintain positions in the wide shadow of the robber on $P_2$ and $P_3$, respectively. Note that this immediately puts us in a valid configuration with no robber bands (with $y$ playing the role of $q$ and $Q'$ playing the role of $P$), so the cops can win the game by Lemma \ref{lem:norobberbands} in this case.
\end{itemize}

\begin{figure}
\centering
\begin{tikzpicture}[scale=0.57]
  \coordinate (A) at (-4.00, 8.00);
  \coordinate (P) at (11.00, 8.00);
  \coordinate (C) at (-4.00, 3.00);
  \coordinate (Q) at (11.00, 3.00);
  \coordinate (E) at (-4.00, 1.30);
  \coordinate (F) at (11.00, 1.30);
  \coordinate (q) at (-3.00, 3.00);
  \coordinate (G) at (3.95, 8.00);
  \coordinate (P_1) at (0.75, 4.75);
  \coordinate (I) at (0.75, 8.00);
  \coordinate (r) at (3.00, 4.50);
  \coordinate (c_1) at (1.00, 6.5);
  
  \coordinate (H) at (-1.00, 3.00);
  \coordinate (c_2) at (2.01, 1.895);
  
  \coordinate (K) at (0.75, 3.07);
  
  \coordinate (L) at (1.00, 2.00);
  \coordinate (M) at (3.97, 2.735);
  \coordinate (N) at (3.75, 1.875);
  \coordinate (O) at (3.00, 2.625);
  \coordinate (R) at (4.49, 2.510);
  \coordinate (S) at (4.26, 2.705);
  \coordinate (T) at (4.35, 2.625);
  \coordinate (U) at (5.25, 2.750);
  \coordinate (P_2) at (4.75, 2.250);
  \coordinate (W) at (4.80, 2.825);
  \coordinate (X) at (6.33, 2.295);
  \coordinate (Y) at (5.72, 2.725);
  \coordinate (Z) at (5.88, 2.600);
  \coordinate (AA) at (6.41, 2.255);
  \coordinate (c_3) at (7.23, 1.760);
  \coordinate (D) at (-2.00, 3.500);
  \coordinate (J) at (7.00, 1.875);
  \coordinate (V) at (6.75, 1.750);
  \coordinate (AB) at (8.54, 2.605);
  \coordinate (AC) at (8.00, 1.875);
  \coordinate (AD) at (7.75, 2.500);
  \coordinate (B) at (9.22, 2.730);
  \coordinate (AE) at (9.00, 2.625);
  \coordinate (AF) at (8.74, 2.385);
  \coordinate (AH) at (9.50, 2.875);
  
  \coordinate (AI) at (9.25, 3.00);
  \coordinate (AG) at (10.00, 3.00);


\draw[fill=lightgray, opacity = 0.2, draw = none] (-4,8) rectangle (20,7.2);
\draw[fill=lightgray, opacity = 0.2, draw = none] (-4,3) rectangle (20,1.3);
\draw[fill=lightgray, opacity = 0.2, draw = none] (-4,5.5) rectangle (20,4.15);

  \draw (-4, 4.15) -- (20,4.15);
  \draw (-4, 5.5) -- (20,5.5);
  \draw (-4, 7.2) -- (20,7.2);
  
  \draw (A) -- (20,8);
  \draw (C) -- (20,3);
  \draw (E) -- (20,1.3);
  \draw [ultra thick] (D) .. controls (0.75, 4.75) and (0.75, 8.00) .. (G);
  \draw[ultra thick] (G) -- (20, 8);
  \node[right] at (20,8) {$P_1$};
\draw [ultra thick] (q) -- (H);
\draw [ultra thick] (H) .. controls (0.75, 3.035) and (1.00, 2.000) .. (c_2);
\draw [ultra thick] (c_2) .. controls (3.75, 1.875) and (3.00, 2.625) .. (M);
\draw [ultra thick] (M) .. controls (4.26, 2.705) and (4.35, 2.625) .. (R);
\draw [ultra thick] (R) .. controls (4.75, 2.250) and (4.80, 2.825) .. (U);
\draw [ultra thick] (U) .. controls (5.72, 2.725) and (5.88, 2.600) .. (X);
\draw (q) -- (D);
\draw [ultra thick] (X) .. controls (7.00, 1.875) and (6.75, 1.750) .. (c_3);
\draw [ ultra thick] (c_3) .. controls (8.00, 1.875) and (7.75, 2.500) .. (AB);
\draw [ ultra thick] (AB) .. controls (9.00, 2.625) and (8.74, 2.385) .. (B);
\draw [ ultra thick] (B) .. controls (9.50, 2.875) and (9.25, 3.000) .. (AG);
\draw [dashed, ultra thick] (9.98, 3) .. controls (10.65, 3.5) and (11, 3.8) .. (11,3.8);
\fill[black] (11,3.8) circle (3pt);
\fill[black] (10.75,3.6) circle (3pt);
\draw[ultra thick] (10.75,3.6) -- (11,3.8);
\node [below, right] at (11,3.8) {$c_3$};
\draw [ultra thick] (11, 3.8) .. controls (12.65, 5.2) and (14, 5.5) .. (14,5.5);
\draw [ultra thick] (14, 5.5) -- (20, 5.5); 

\draw [dashed] (10.75,8) -- (10.75, 3);
\node [below,right, font=\scriptsize] at (10.75,7.5) {$d(x,q)=i$};

\node[right] at (20,5.5) {$P_3^i$};
\node[right] at (20, 3) {$P_2$};
\node at (5,6.25) {$A$};
\node at (14,6.25) {$B$};
\node at (14, 3.6) {$C$};

\draw [ultra thick] (10,3) -- (20,3);

  \fill[black] (q) circle (3pt);
  \node[below, black] at (q) {$q$};
  \fill[black] (c_2) circle (3pt);
  \fill[black] (D) circle (3pt);
  \fill[black] (2.3,7.6) circle (3pt);
  \node[above, black] at (2.4,8) {$c_1$};
  \node[left, black] at (A) {$P$};
  \node[left, black] at (C) {$Q$};
  \node[left, black] at (-4,5.5) {$Q'$};
  \fill[black] (q) circle (3pt);
  \fill[black] (c_2) circle (3pt);
  \node[above, black] at (c_2) {$c_2$};
\end{tikzpicture}
\caption{The first phase of the robber band-reduction strategy. Bold lines indicate the paths guarded by the three cops; when $c_3$ is on $P_2$ there is some overlap between $P_2$ and $P_3^i$. The dashed line at level $i$ indicates where the robber territory is split into regions $A$, $B$, and $C$.}
\label{fig:first-band-reduction-phase}
\end{figure}

We claim that the cops can follow this strategy in a way such that eventually one of three things happens. Either the robber is caught, or $c_3$ reaches $Q'$, or we may increase $i$. Note that increasing $i$ often enough also leads to $c_3$ reaching $Q'$. Clearly, once the cops have established positions as described in the strategies, the cops can play the strategy with only two cops moving at any given turn as long as the robber stays in the same region.

By assumption, the robber starts in region $A$, and the cops are in the correct positions for their strategy with $i = \ell(c_3)$. 
If the robber remains in $A$ long enough, then by Lemmas \ref{lem:leisurelyguarding} and \ref{lem:nodoublebypath}, there is some turn on which either $c_1$ or $c_2$ does not move. Each time this happens, $c_3$ moves one level up, and when $c_3$ is at level $i+2$ we may increase $i$ by one. Hence we may assume that the robber eventually moves to $B$ or $C$. 

When the robber moves from $A$ to $B$ or $C$, he moves from level $i$ to level $i+1$. If $c_3$ is at level $i+1$ then $c_3$ is already in the wide shadow of $r$ and does not have to move. Since $\ell(c_2)\in \{i-1,i\}$, cop $c_2$ can move to level $i$ in this turn. If $c_3$ is at level $i$, then $c_2$ is also at level $i$, so $c_2$ does not have to move and thus $c_3$ can move to level $i+1$. In either case, the cops are in the correct positions for their strategies for $B$ or $C$, respectively. 

As noted above, the cops may immediately switch to a winning strategy if the robber ever moves to $C$. Hence it only remains to consider the case when the robber is in $B$. If the robber moves far enough away from $q$, then the robber's wide shadow on $P_3^i$ is entirely contained in $Q'$ and thus $c_3$ must arrive at $Q'$. Otherwise, $\ell(r)$ remains bounded while the robber is in $B$. Since $P_3$ is a most counter-clockwise geodesic, by Lemma \ref{lem:almostperiodic} there is no bypath of $P_3$ in $B$. In particular, no vertex of $B$ simultaneously lies on a bypath of $P_1$ and $P_3^i$, and thus Lemma \ref{lem:leisurelyguarding} implies that there is some turn where either $c_1$ or $c_3$ does not move. On this turn, $c_2$ moves one level up. Since staying in the wide shadow never requires $c_3$ to move to $v_i$ (except for catching the robber), at this point we may increase $i$ by 1.

Finally, if the robber moves from $B$ to $A$, this means that the robber moves from level $i+1$ to level $i$. Hence $c_2$ is already at the same level as the robber. Thus $c_3$ may move to a vertex one level higher than $\ell(c_3)$ and we may increase $i$ to the level on which $c_3$ is after this move. Note that this implies that $i$ increases.

This shows that $c_3$ eventually reaches $Q'$ regardless of the robber's strategy, and we have thus completed the first phase of our strategy.

Before giving the strategy for the second phase, we show that we may assume that $\ell(c_3) = \ell(r)$. 

To prove this claim, first note that if $c_2$ is not in the wide shadow of the robber on $P_2$, then $c_3$ must have reached $Q'$ while guarding $P_3^i$, and the robber's position is necessarily above $Q'$. Before the move taking $c_3$ to $Q'$ there was a vertex in the wide shadow on $P_3^i$ which is not contained in $Q'$, and after the move there is no such vertex. This implies that $c_3$ is at the first vertex of the wide shadow (both before and after the move), and thus by Lemma \ref{lem:wideshadowextra} we have that $d(c_3,v_j) = d(r,v_j)$ for every large enough $j$. It follows from Lemma \ref{lem:wideshadowextra} and the fact that the wide shadow on $Q'$ is non-empty that $c_3$ is at the first vertex of the robber's wide shadow on $Q'$. Now $c_1$ and $c_3$ may guard $P_1$ and $Q'$ (by staying in the robber's wide shadow), respectively, while $c_2$ teleports to a vertex $c_2$ on a bypath of $P_2$ for which $d(q,c_2) > d(q,r)$. We modify $P_2$ so that it passes through this vertex $c_2$. If it ever happens that $\ell(r) = \ell(c_2)$, then $c_2$ is in the wide shadow of the robber. Otherwise, note that there is no bypath of $Q'$ above $Q'$, so by Lemmas \ref{lem:leisurelyguarding} there is a turn on which one of $c_1$ or $c_3$ does not move, and thus $c_2$ can move towards $q$. After finitely many such moves, $c_2$ is in the wide shadow of the robber on $P_2$.

Once $c_1$ and $c_2$ are in the wide shadow of the robber on $P_1$ and $P_2$, respectively, then $c_3$ can teleport to a vertex with $\ell(c_3)  > \ell(r)$, and the same backtracking argument as before ensures that eventually $\ell(c_3) = \ell(r)$. If the robber is between $Q'$ and $P$, then $c_2$ teleports to a higher level than the robber and we modify $P_2$ so that it passes through this position.

As in the first phase, our next step is to split the robber territory into regions and assign cop strategies to each region. To define the regions, we first define geodesics which the three cops will guard, respectively. 

Let $x$ and $y$ be the first and last vertex $P_1$ uses in the band of $Q'$, and note that $y$ necessarily lies on $Q$. Let $S$ be a geodesic ray in the band of $Q'$ starting at $y$, passing through $c_3$, and eventually following the lowest geodesic in this band to the right. Choose $k$ large enough that $\sigma^k(q)$ lies on $P_2$, $\sigma^k(x)$ lies on $S$, and $d(q,\sigma^k(q)) > 3m$ where $m$ is the largest distance of any vertex to $Q$. Let $P_1'$ be the subray obtained from $P_1$ by removing the initial piece up to $y$, let $P_2' = P_2\sigma^k(q)\sigma^k(P_1)\sigma^k(x)S$, and let $P_3' = P_1yS$. Note that $P_2'$ and $P_3'$ are geodesics by Lemma \ref{lem:switchgeodesic}. Finally, define $P_3'^i$ to be the subpath consisting of all vertices of $P_3'$ at level $\leq i$.

Using these geodesics, we define regions of the robber territory and corresponding strategies as follows, see Figure \ref{fig:second-band-reduction-phase} for a sketch.
\begin{itemize}
    \item Region $A$ consists of all vertices at level $\leq i$ which lie above $P_3'$. If the robber is in $A$, cops $c_1$ and $c_3$ maintain positions in the wide shadow of the robber on $P_1'$ and $P_3'^i$, respectively such that $\ell(c_3) \in \{\ell(r),\ell(r)-1\}$. Cop $c_2$ is at the unique vertex of $P_2'$ at level $i$.
    \item Region $B$ consists of all vertices at level $> i$. If the robber is in $B$, cops $c_1$ and $c_2$ maintain positions in the wide shadow of the robber on $P_1'$ and $P_2'$, respectively such that $\ell(c_2) \in \{\ell(r),\ell(r)-1\}$. Cop $c_3$ is at the vertex of $P_3$ at level $i$ or level $i-1$; if $\ell(c_3) = i-1$, then $\ell(c_2) = \ell(r)$.
    \item Region $C$ consists of all vertices at level $\leq i$ which lie below $P_3'$. If the robber is in $C$, cops $c_2$ and $c_3$ maintain positions in the wide shadow on $P_2'$ and $P_3'$, respectively. We note that this restricts the robber to a finite region enclosed by $P_2'$ and $P_3'$.
\end{itemize}

\begin{figure}
\centering

\begin{tikzpicture}[scale=0.57]
  \coordinate (A) at (-4.00, 8.00);
  \coordinate (P) at (11.00, 8.00);
  \coordinate (C) at (-4.00, 3.00);
  \coordinate (Q) at (11.00, 3.00);
  \coordinate (E) at (-4.00, 1.30);
  \coordinate (F) at (11.00, 1.30);
  \coordinate (q) at (-3.00, 3.00);
  \coordinate (G) at (3.95, 8.00);
  \coordinate (P_1) at (0.75, 4.75);
  \coordinate (I) at (0.75, 8.00);
  \coordinate (r) at (3.00, 4.50);
  \coordinate (c_1) at (1.00, 6.5);
  
  \coordinate (H) at (-1.00, 3.00);
  \coordinate (c_2) at (2.01, 1.895);
  
  \coordinate (K) at (0.75, 3.07);
  
  \coordinate (L) at (1.00, 2.00);
  \coordinate (M) at (3.97, 2.735);
  \coordinate (N) at (3.75, 1.875);
  \coordinate (O) at (3.00, 2.625);
  \coordinate (R) at (4.49, 2.510);
  \coordinate (S) at (4.26, 2.705);
  \coordinate (T) at (4.35, 2.625);
  \coordinate (U) at (5.25, 2.750);
  \coordinate (P_2) at (4.75, 2.250);
  \coordinate (W) at (4.80, 2.825);
  \coordinate (X) at (6.33, 2.295);
  \coordinate (Y) at (5.72, 2.725);
  \coordinate (Z) at (5.88, 2.600);
  \coordinate (AA) at (6.41, 2.255);
  \coordinate (c_3) at (7.23, 1.760);
  \coordinate (D) at (-2.00, 3.500);
  \coordinate (J) at (7.00, 1.875);
  \coordinate (V) at (6.75, 1.750);
  \coordinate (AB) at (8.54, 2.605);
  \coordinate (AC) at (8.00, 1.875);
  \coordinate (AD) at (7.75, 2.500);
  \coordinate (B) at (9.22, 2.730);
  \coordinate (AE) at (9.00, 2.625);
  \coordinate (AF) at (8.74, 2.385);
  \coordinate (AH) at (9.50, 2.875);
  
  \coordinate (AI) at (9.25, 3.00);
  \coordinate (AG) at (10.00, 3.00);


\draw[fill=lightgray, opacity = 0.2, draw = none] (-4,8) rectangle (20,7.2);
\draw[fill=lightgray, opacity = 0.2, draw = none] (-4,3) rectangle (20,1.3);
\draw[fill=lightgray, opacity = 0.2, draw = none] (-4,5.5) rectangle (20,4.15);

  \draw (-4, 4.15) -- (20,4.15);
  \draw (-4, 5.5) -- (20,5.5);
  \draw (-4, 7.2) -- (20,7.2);
  
  \draw (A) -- (20,8);
  \draw (C) -- (20,3);
  \draw (E) -- (20,1.3);
  \draw (D) .. controls (0.75, 4.75) and (0.75, 8.00) .. (G);
  \draw[thick] (G) -- (20, 8);

  \node[right] at (20,8) {$P_1'$};
  \node at (1.8, 4.9) {$P_3'^i$};
\draw [ultra thick] (q) -- (H);
\draw [ultra thick] (H) .. controls (0.75, 3.035) and (1.00, 2.000) .. (c_2);
\draw [ultra thick] (c_2) .. controls (3.75, 1.875) and (3.00, 2.625) .. (M);
\draw [ultra thick] (M) .. controls (4.26, 2.705) and (4.35, 2.625) .. (R);
\draw [ultra thick] (R) .. controls (4.75, 2.250) and (4.80, 2.825) .. (U);
\draw [ultra thick] (U) .. controls (5.72, 2.725) and (5.88, 2.600) .. (X);

\coordinate (D_1) at (-1.3, 3.86);
\coordinate (D_2) at (-0.5, 4.54);
\draw [ultra thick] (D) .. controls (D_1) and (D_2) .. (0.3,5.5);
\draw[ultra thick] (0.3,5.5) -- (3,5.5); 
\draw[ultra thick] (3,5.5) .. controls (3.4, 5.2) and (3.7, 4.4) .. (4, 4.15); 
\draw[ultra thick] (4,4.15) -- (7.23, 4.15);
\draw[ultra thick, dashed] (7.23,4.15) -- (11.5, 4.15);

\fill[black] (3.4,5) circle (3px); 
\node at (3.2,4.5) {$c_3$};

\draw[dashed] (7.23,8) -- (7.23, 1.76); 

\draw (q) -- (D);
\draw [ultra thick] (X) .. controls (7.00, 1.875) and (6.75, 1.750) .. (c_3);
\draw [ultra thick] (c_3) .. controls (8.00, 1.875) and (7.75, 2.500) .. (AB);
\draw [ultra thick] (AB) .. controls (9.00, 2.625) and (8.74, 2.385) .. (B);
\draw [ultra thick] (B) .. controls (9.50, 2.875) and (9.25, 3.000) .. (AG);
\draw [ultra thick] (9.98, 3) .. controls (10.65, 3.5) and (11, 3.8) .. (11,3.8);
\draw [ultra thick] (11, 3.8) .. controls (12.65, 5.2) and (14, 5.5) .. (14,5.5);
\draw [ultra thick] (14, 5.5) -- (20, 5.5); 

\node [below,right, font=\scriptsize] at (7.2,7.5) {$d(x,q)=i$};

\node[right] at (20,5.5) {$P_2'$};
\node at (5,6.25) {$A$};
\node at (14,6.25) {$B$};
\node at (5, 3.6) {$C$};

\fill[black] (0.25,5.5) circle (3pt);
\coordinate (D') at (0.6, 5.9);
\fill[black] (D') circle (3pt);
\coordinate (P1) at (1.2,7);
\coordinate (P2) at (2.5,7.99);
\draw [ultra thick] (D') .. controls (P1) and (P2) .. (G);
\node[above,left] at (0.3,6) {$y$};
\draw [ultra thick] (G) -- (20,8);

  \fill[black] (q) circle (3pt);
  \node[below, black] at (q) {$q$};
  \fill[black] (c_3) circle (3pt);
  \fill[black] (D) circle (3pt);
  \fill[black] (2.3,7.6) circle (3pt);
  \node[above, black] at (2.4,8) {$c_1$};

  \node[left, black] at (A) {$P$};
  \node[left, black] at (C) {$Q$};
  \node[left, black] at (-4,5.5) {$Q'$};
  \fill[black] (q) circle (3pt);
  \node at (7.9,1.6) {$c_2$};
  \fill[black] (c_3) circle (3pt);
\end{tikzpicture}

\caption{The second phase of the robber band-reduction strategy. As before, the dashed line at level $i$ indicates where the robber territory is
split into regions $A$, $B$, and $C$.}
\label{fig:second-band-reduction-phase}
\end{figure}

Like in the first phase, we claim that the cops can follow this strategy in a way which allows them to eventually increase the value of $i$, unless $c_2$ reaches $S$ or the robber is caught before. As before, once the cops have established positions as described in the strategies, the cops can play the strategy as long as the robber stays in the same region.

Hence, it remains to show that the cops start this phase in suitable positions, that they can keep following this strategy when the robber moves between regions, and that they can eventually increase the value of $i$.

Since $\ell(c_3) = \ell(r)$ and $q$ is an endpoint of $P_3'^i$, we see that $c_3$ is in the wide shadow of $r$ on $P_3'$. If the robber is between $Q$ and $Q'$, then we arrive at a weakly valid configuration with no robber bands by letting $c_2$ and $c_3$ guard $P_2$ and $P_3'$, respectively. So we may assume that the robber is between $Q'$ and $P$. Since $c_1$ is in the wide shadow of $r$, and $\ell(c_2) > \ell(r)$, all conditions are satisfied for $i = \ell(c_2)$ where the robber is in $A$.

Note that by Lemma \ref{lem:nodoublebypath} there is no vertex which simultaneously lies on a bypath of $P_1'$ and $P_3'$, so if the robber stays in $A$ indefinitely, then Lemma \ref{lem:leisurelyguarding} implies that we can increase $\ell(c_2)$ (and thus increase $i$) after finitely many moves. Hence the robber must move to $B$ eventually.

When the robber moves from $A$ to $B$, the robber's level increases from $i$ to $i+1$. Note that the strategy for $A$ dictates that $\ell(c_3) \in \{i,i-1\}$, so $c_3$ does not have to move on this turn. This means that $c_2$ can move from level $i$ to level $i+1$ (and since $q$ is an endpoint of $P_3'$, this vertex must be in the wide shadow of $r$). By Lemma \ref{lem:nodoublebypath} there is no vertex which simultaneously lies on a bypath of $P_1'$ and $P_2'$, so if the robber stays in $B$ indefinitely, then either the robber moves arbitrarily far away from $q$, forcing $c_2$ to move onto $S$, or we can increase $\ell(c_3)$. The first such increase may happen when $c_2$ is no longer at the same level as the robber, and thus $c_3$ must move up to level $i$, but after at most two increases of $\ell(c_3)$ we can increase $i$. Hence the robber must eventually move back to $A$ or to $C$.

When the robber moves to $A$, the robber's level decreases from $i+1$ to $i$. If $\ell(c_3) = i-1$, then $\ell(c_2) = i+1$ before this move. In this situation, $c_2$ stays at the same vertex and $c_3$ moves to level $i$. If $\ell(c_3) = i$, then $c_2$ can move to (or stay at) level $i+1$. In both cases, we can increase $i$ by 1.

Finally, if the robber moves to $C$, then, as before, $\ell(r)$ decreases from $i+1$ to $i$. Both $c_2$ and $c_3$ can move to level $i$, and since $P_2'$ and $P_3'$ start at $q$, both of cops are in the wide shadow of the robber. We claim that the cops have a strategy to reach a weakly valid configuration with no robber bands.

Note that $QqP_2'$ is a geodesic by Lemma \ref{lem:switchgeodesic}. On each turn $c_2$ moves towards the right boundary of the robber's wide shadow on this geodesic. Note that since $P_2'$ is a subray of $QqP_2'$, the wide shadow on $QqP_2'$ is contained in the wide shadow on $P_2'$. Thus, by moving towards the wide shadow on $QqP_2'$, the cop never leaves the wide shadow on $P_2'$ and therefore keeps guarding $P_2'$ throughout. Since $r$ is confined to a finite set of vertices, $c_2$ eventually reaches the right boundary of the wide shadow of $r$ on $QqP_2'$. Similarly, $c_3$ moves towards the left boundary of the wide shadow on $QqP_3'$ and eventually reaches this left boundary.

Let $T$ be a geodesic from $r$ to $P_2' \cup P_3'$. Since the shortest path from $r$ to $Q$ contains some vertex of $P_2' \cup P_3'$, the length of $T$ is at most $m$. Let $s$ be the other endpoint of $T$. Then $d(q,r) \leq d(q,s) + d(s,r) \leq d(q,s)+ m$. Similarly, $d(\sigma^k(x),r) \leq d(\sigma^k(x),s) + m$. Noting that $s$ lies on a geodesic (either $P_2'$ or $P_3'$) between $q$ and $\sigma^k(x)$, we obtain
\begin{multline*}
     d(q,r) + d(\sigma^k(x),r) \leq d(q,s) + d(\sigma^k(x),s) +2m= d(q,\sigma^k(x)) + 2m\\= d(q,x) + d(x,\sigma^k(x)) + 2m \leq d(x,\sigma^k(x)) + 3m.
\end{multline*}
Since $d(x,\sigma^k(x)) = d(q,\sigma^k(q)) > 3m$ and $d(c_i,r) \leq d(q,r)$, this means that either $c_2$ is between $q$ and $\sigma^k(q)$ on $P_2'$, or $c_3$ is between $x$ and $\sigma^k(x)$ on $P_3'$. Assume without loss of generality the former (the other case is symmetric). Since $c_2$ is at the right boundary of the robber's wide shadow on $QqP_2'$, it follows from Corollary \ref{cor:wideshadowboundary} that $d(c_2,w) = d(r,w)$ for all but finitely many vertices of $Qq$. Consequently, again by Corollary \ref{cor:wideshadowboundary}, $c_2$ is in the wide shadow of $r$ on $QqP_2'\sigma^k(q)Q$ and thus can switch to guarding this geodesic. By restricting to the subpath $P_2'\sigma^k(q)Q$ we have arrived at a weakly valid configuration with no robber bands.
\end{proof}

\bibliographystyle{abbrv}
\bibliography{sources}

\end{document}